\newtheorem{theorem}{Theorem}[section]
\newtheorem{lemma}[theorem]{Lemma}
\newtheorem*{lemma*}{Lemma}
\newtheorem{proposition}[theorem]{Proposition}
\newtheorem{corollary}[theorem]{Corollary}
\theoremstyle{definition}
\newtheorem{definition}[theorem]{Definition}
\theoremstyle{remark}
\newtheorem{remark}[theorem]{Remark}
\numberwithin{equation}{section}
\newcommand{\abs}[1]{\lvert#1\rvert}
\newcommand{\norm}{\,|\!|\,}
\newcommand{\R}{\mathbb{R}}
\newcommand{\X}{\mathbb{X}}
\newcommand{\Y}{\mathbb{Y}}
\newcommand{\onto}{\xrightarrow[]{{}_{\!\!\textnormal{onto\,\,}\!\!}}}
\newcommand{\into}{\xrightarrow[]{{}_{\!\!\textnormal{into\,\,}\!\!}}}
\newcommand{\bydef}{\stackrel {\textnormal{def}}{=\!\!=} }
\DeclareMathOperator{\dist}{dist}
\def\le{\leqslant}
\def\ge{\geqslant}
\begin{document}

\title[Bi-conformal Energy and Quasiconformality]{Deformations of Bi-conformal Energy and\\ a new Characterization of Quasiconformality}
\author[T. Iwaniec]{ Tadeusz Iwaniec}
\address{Department of Mathematics, Syracuse University, Syracuse,
NY 13244, USA}
\email{tiwaniec@syr.edu}

\author[J. Onninen]{Jani Onninen}
\address{Department of Mathematics, Syracuse University, Syracuse,
NY 13244, USA and Department of Mathematics and Statistics, P.O.Box 35 (MaD) FI-40014 University of Jyv\"askyl\"a, Finland}
\email{jkonnine@syr.edu}

\author[Z. Zhu]{Zheng Zhu}
\address{Department of Mathematics and Statistics, P.O.Box 35 (MaD) FI-40014 University of Jyv\"askyl\"a, Finland}
\email{zheng.z.zhu@jyu.fi}

\thanks{T. Iwaniec was supported by the NSF grant DMS-1802107.
J. Onninen was supported by the NSF grant DMS-1700274. This research was done while Z. Zhu was visiting Mathematics Department at Syracuse University. He wishes to thank SU for the hospitality.}

\subjclass[2010]{Primary 30C65; Secondary  46E35,  58C07}


\keywords{Quasiconformality, bi-conformal energy, mapping of integrable distortion, modulus of continuity}

\begin{abstract}
The concept of hyperelastic deformations of  bi-conformal energy is developed as an extension of quasiconformality.
These  are homeomorphisms $\, h \colon  \mathbb X \onto \mathbb Y\,$ between domains $\,\mathbb X, \mathbb Y \subset \mathbb R^n\,$
of the Sobolev class $\,\mathscr W^{1,n}_{\textnormal{loc}} (\mathbb X, \mathbb Y)\,$  whose inverse $\, f \bydef h^{-1} \colon  \mathbb Y \onto \mathbb X\,$
also belongs to $\,\mathscr W^{1,n}_{\textnormal{loc}}(\mathbb Y, \mathbb X)\,$. Thus the paper  opens  new topics in Geometric Function Theory (GFT)
with connections to mathematical models of Nonlinear Elasticity (NE). In seeking differences and similarities with quasiconformal mappings we examine closely the modulus of continuity of deformations of bi-conformal energy.  This leads us to a new characterization of quasiconformality. Specifically, it is observed that quasiconformal mappings behave locally at every point like radial stretchings. Without going into detail, if a quasiconformal map $\,h\,$  admits a function $\,\phi\,$ as its \textit{optimal modulus of continuity} at a point $\,x_\circ \, $, then  $\,f =  h^{-1}\,$ admits the inverse function $\, \psi = \phi^{-1}\,$ as its modulus of continuity at $\, y_\circ = h(x_\circ) \,$.  That is to say; a poor (possibly harmful)  continuity of $\,h\,$ at a given point $\,x_\circ\,$ is always compensated by a better continuity of $\,f\,$ at $\,y_\circ\,$, and vice versa. Such a gain/loss property,  seemingly overlooked by many authors,  is actually characteristic of quasiconformal mappings. 
 
 It turns out that the elastic deformations of bi-conformal energy are very different in this respect. Unexpectedly, such a map may have the same optimal modulus of continuity as its inverse  deformation.  In line with Hooke's Law,  when trying to restore the original shape of the body  (by the inverse transformation) the modulus of continuity may neither be improved nor become worse.
  However, examples to confirm this phenomenon are far from being obvious; indeed, elaborate computations are on the way. We eventually hope that our examples will gain an interest in the materials science, particularly in mathematical models of hyperelasticity.  
\end{abstract}

\maketitle

\section{Introduction} We study Sobolev homeomorphisms $\,h \colon \mathbb X \onto \mathbb Y\,$   between domains $\, \mathbb X , \mathbb Y  \subset \mathbb R^n\,$, together with their inverse mappings denoted by $\, f \bydef h^{-1} \colon \mathbb Y  \onto \mathbb X\,$. We impose two standing conditions on these mappings:

\begin{itemize}

\item The \textit{conformal energy} of $\,h\,$  (stored in $\, \mathbb X$)  is finite; that is,
\begin{equation}\label{eq:conf}
 \mathbf E_\mathbb X [h ] \bydef \int_\mathbb X \,| Dh(x)|^n \, \textnormal{d} x  \, < \infty 
 \end{equation}
 \item
 The conformal energy of $\,f\,$ (stored in $\,\mathbb Y$)  is also finite;
 \begin{equation}
 \mathbf E_\mathbb Y [f ] \bydef \int_\mathbb Y \,| Df(y)|^n \, \textnormal{d} y  \, < \infty 
 \end{equation}
\end{itemize}
Hereafter, $\abs{A}$ stands for the {\it Hilbert-Schmidt norm}
of a linear map $A$, defined by the rule $\abs{A}^2 = \textnormal{Tr}\, (A^t A)$.  
It should be noted that  the above energy integrals are invariant under conformal change of variables in their domains of definition ($\,\mathbb X\,$ and $\,\mathbb Y\,$, respectively). 
 This    motivates us calling such homeomorphisms
 \begin{center} \textit{\textbf{Deformations of Bi-conformal Energy}}
 \end{center}
Clearly, such deformations  include {\it quasiconformal} mappings. A Sobolev homeomorphisms $\,h \colon \mathbb X \onto \mathbb Y\,$ is said to be a quasiconformal mapping if there exists a constant ${\bf K}$ such that
\begin{equation}\label{eq:quasianalytic}
\abs{Dh (x)}^n \le {\bf K} J(x,h)\, ,  \qquad J(x,h)=\det Dh(x) \, .  
\end{equation}
The conformal energy integral~\eqref{eq:conf}, an  $n$-dimensional alternative to the classical Dirichlet integral, has drawn the attention of researchers in the  multidimensional GFT  \cite{BI, HKMb, HKb,   IMb, Reb,  Rib, Vub}. In Geometric Analysis the Sobolev space $\,\mathscr W^{1,n}(\mathbb X, \mathbb R^n) \,$  plays a special role for several reasons. First, this space  is on the edge of the continuity properties of Sobolev's mappings. Second, just the fact that $\,h\,$ is a homeomorphism  allows us to establish uniform bounds of its modulus of continuity. Precisely, given a compact subset $\,\mathbf X \Subset \mathbb X\,$, there exists a constant $\, C(\mathbf X, \mathbb X)\,$ so that for all distinct points $\; x_1 , x_2 \,\in  \mathbf X\,$, we have:

\begin{equation}\label{WellKnownEstimateX}
 |\,h(x_1)  - h(x_2) \,|  \; \leqslant \frac{ C(\mathbf X, \mathbb X)\;\sqrt[n]{\mathbf E_\mathbb X [h ]}}{\log^{\frac{1}{n}}\left( 1 \, +\, \frac{\textnormal{diam}\,\mathbf X }{ |x_1 - x_2 |} \right)}
\end{equation}
For a historical account and more details concerning this estimate we refer the reader to Section 7.4, Section 7.5  and Corollary 7.5.1 in the monograph \cite{IMb}.

For the same reasons, to every compact  $\,\mathbf Y \Subset \mathbb Y\,$ there corresponds a constant $\,C(\mathbf Y,\mathbb Y)\,$ such that for all distinct points $\; y_1 , y_2 \,\in  \mathbf Y\,$, we have:

\begin{equation}\label{WellKnownEstimateX}
 |\,f(y_1)  - f(y_2) \,|  \; \leqslant \frac{ C(\mathbf Y, \mathbb Y)\;\sqrt[n]{\mathbf E_\mathbb Y [f ]}}{\log^{\frac{1}{n}}\left( 1 \, +\, \frac{\textnormal{diam}\,\mathbf Y }{ |y_1 - y_2 |} \right)}
\end{equation}

 In other words, $\,h\,$ and $\,f\,$  admit the same function   $ \,\omega = \omega(t)   \approx \log^{-\frac{1}{n}} \left(1 +  1/t \right)\, $ as a  modulus of continuity.  Shortly, $\,h\,$ and $\,f\,$ are $\,\omega$-continuous.   There is still a slight  improvement to these estimates; namely,
 \begin{equation}
 \lim_{|x_1 - x_2| \rightarrow 0 }  |\,h(x_1)  - h(x_2)\,| \, \log^{\frac{1}{n}}\left( 1 \, +\, \frac{\textnormal{diam}\,\mathbf X }{ |x_1 - x_2 |} \right)\; =\; 0
 \end{equation}
 The question whether  the modulus of continuity  $ \,\omega = \omega(t)   \approx \log^{-\frac{1}{n}} \left(1 +  1/t \right)\, $ is the best and universal for all bi-conformal energy mappings remains unclear.  We shall not enter this issue here. The {\it optimal modulus of continuity} of $h \colon \X \onto \Y$ at a given point $x_\circ \in \X$  is defined  by
 \begin{equation}\label{eq:modcont}
 \omega_h (x_\circ ; t) \bydef \max_{\abs{x-x_\circ} =t}  \abs{h(x)-h(x_\circ)} \qquad \textnormal{for } 0\le t < \dist (x_\circ , \partial \X) \, . 
 \end{equation}
 Nevertheless, it is easy to see, via examples of radial stretchings, that in the class of functions that are powers of logarithms the exponent $\, \alpha = \frac{1}{n}\,$ is sharp; meaning that for $\, \alpha > \frac{1}{n}\,$ it is not generally true that
\begin{equation} \label{GeneralModulusOfContinuity}
 |\, h(x_1)  \,-\, h(x_2)\,| \; \preccurlyeq \; \log^{- \alpha } \left( 1 \, +\, \frac{\textnormal{diam}\,\mathbf X }{ |x_1 - x_2 |} \right)\;\; \footnote{Hereafter the notation $\,\mathbf A \preccurlyeq \mathbf B$ stands for the inequality $\,\mathbf A \leqslant  c\, \mathbf B$ in which $\, c>0\,$, called implied or hidden constant, plays no role. The implied constant may vary from line to line and is easily  identified from the context, or explicitely specified if necessary.}
 \end{equation}
To this end, we take a quick look at the radial homeomorphism $\,h \colon \mathbb B^n \onto \mathbb B ^n\,$ of the unit ball $\,\mathbb B^n \subset \mathbb R^n\,$ onto itself,
 \begin{equation}\label{RadialStretching}
  h(x) = \frac{x}{|x| \,\big( 1 - \log |x| \big)^{\frac{1}{n}}\; \big[ \,\log( e - \log |x|)   \,\big] ^\beta        }  \;\;,\;\;\textnormal{where}  \;\;\;\beta > \frac{1}{n}
 \end{equation}
It is often seen that the inverse map $\, f \bydef h^{-1} \colon  \mathbb Y \rightarrow \mathbb X\,$  admits better modulus of continuity than $\,h\,$, or  vice versa. Just for $\,h\,$ defined in (\ref{RadialStretching}), its inverse is  even $\,\mathscr C^\infty\,$-smooth.  Such a gain/loss rule about the moduli of continuity for a map and its inverse is typical of the radial stretching/squeezing. It turns out that  the  gain/loss rule gives a new characterization for a widely studied class of  quasiconformal mappings. 
\begin{theorem}\label{thm:qc}
Let $h \colon \X \onto \Y$ be a homeomorphism between domains $\X, \Y \subset \R^n$ and let $f \colon \Y \onto \X$ denote its inverse.  Then $h$ is quasiconformal if and only if for every pair $(x_\circ , y_\circ) \in \X \times \Y$, $y_\circ =h(x_\circ)$, the optimal modulus of continuity functions $\omega_h = \omega_h(x_\circ ; t)$ and $\omega_f = \omega_f(y_\circ ; s)$ are quasi-inverse to each other; that is, there is a constant $\mathcal K \ge 1$ (independent of $(x_\circ , y_\circ)$) such that
\[\mathcal K^{-1} s \le (\omega_h \circ \omega_f) (s) \le \mathcal K s\]
for sufficiently small $s>0$.
\end{theorem}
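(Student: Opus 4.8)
The plan is to prove each direction of the equivalence separately, working with the two defining inequalities $\abs{Dh}^n \le \mathbf K J(\cdot,h)$ and, for quasiconformal maps, the corresponding inequality for $f$, translated into metric language about images of balls and spheres.

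\textbf{The ``only if'' direction.} Suppose $h$ is $\mathbf K$-quasiconformal; then $f$ is $\mathbf K'$-quasiconformal with $\mathbf K'$ depending only on $\mathbf K$ and $n$. Fix $x_\circ$ and $y_\circ = h(x_\circ)$. The key tool is the classical \emph{egg-yolk} (or spherical ring) distortion estimate for quasiconformal maps: there is a constant $\mathbf c = \mathbf c(n,\mathbf K) \ge 1$ such that, writing $L_h(x_\circ,t) = \max_{\abs{x-x_\circ}=t}\abs{h(x)-y_\circ}$ and $\ell_h(x_\circ,t) = \min_{\abs{x-x_\circ}=t}\abs{h(x)-y_\circ}$, one has $L_h(x_\circ,t) \le \mathbf c\,\ell_h(x_\circ,t)$ for all $t$ with $B(x_\circ, 2t) \Subset \X$ (this follows from the modulus inequality applied to the ring $B(x_\circ,2t)\setminus \overline{B(x_\circ,t)}$, whose image separates the boundary components realizing $\ell$ and $L$; it is where quasiconformality enters decisively). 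Note $\omega_h(x_\circ;t) = L_h(x_\circ,t)$. The crucial geometric observation is then the containment $B(y_\circ, \ell_h(x_\circ,t)) \subseteq h(B(x_\circ,t)) \subseteq B(y_\circ, L_h(x_\circ,t))$, which, upon passing to $f$, gives that the sphere of radius $s = \ell_h(x_\circ,t)$ about $y_\circ$ is mapped by $f$ inside $B(x_\circ,t)$, hence $\omega_f(y_\circ;s) \le t$; while the sphere of radius $s = L_h(x_\circ,t)$ is mapped by $f$ outside $B(x_\circ,t)$ at some point, so $\omega_f(y_\circ; L_h(x_\circ,t)) \ge t$. Combining these with the bound $L_h \le \mathbf c\, \ell_h$ and the fact that $\omega_h$ is increasing, a short chase yields $\mathbf c^{-1} s \le (\omega_h\circ\omega_f)(s) \le \mathbf c\, s$ for all small $s$, uniformly in $(x_\circ,y_\circ)$.

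\textbf{The ``if'' direction.} This is the harder half and the main obstacle. Assume the quasi-inverse relation holds with a uniform constant $\mathcal K$. I would first convert it back into a \emph{pointwise uniform quasisymmetry-type} statement: the relation $\mathcal K^{-1} s \le \omega_h(x_\circ;\omega_f(y_\circ;s)) \le \mathcal K s$, together with the elementary inequalities relating $\omega_f$ to the inner radius function of $h$, forces $L_h(x_\circ,t) \le H\, \ell_h(x_\circ,t)$ with $H = H(\mathcal K)$, for all sufficiently small $t$ and every $x_\circ$ — i.e. $h$ is \emph{weakly quasisymmetric at every point, with uniform constant}. One must be careful here: the hypothesis controls the composition only for small $s$, so one gets the ball-distortion bound only at small scales, but that is enough since quasiconformality is a local (infinitesimal) condition. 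The final step is to invoke the standard fact that a homeomorphism $h$ between domains in $\R^n$, $n \ge 2$, satisfying a uniform local $H$-bound $L_h(x,t) \le H\,\ell_h(x,t)$ is quasiconformal (this is the metric definition of quasiconformality; see the discussion in \cite{IMb}, and one should record that for the \emph{analytic} conclusion $\abs{Dh}^n \le \mathbf K J(\cdot,h)$ one additionally needs the a priori Sobolev regularity, which is built into the standing hypotheses of the paper, or else cite the equivalence of the metric and analytic definitions). The delicate point throughout is passing between the ``$\max$ over a sphere'' quantities $\omega_h, \omega_f$ and the inner/outer radii $\ell, L$ of ball images; this requires the observation that $\omega_f(y_\circ; \ell_h(x_\circ,t))$ and $\omega_f(y_\circ; L_h(x_\circ,t))$ pinch $t$ from the two sides, which I would isolate as a preliminary lemma and then use mechanically in both directions.
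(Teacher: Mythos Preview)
Your proposal is correct and follows essentially the same route as the paper: both directions hinge on the identity $\omega_f\bigl(y_\circ;\ell_h(x_\circ,t)\bigr)=t$ (your ``pinching'' observation), which together with $\omega_h(x_\circ;t)=L_h(x_\circ,t)$ converts the quasi-inverse bound into the ratio bound $L_h/\ell_h\le\mathcal K$ and conversely. Two small remarks: the paper notes that only the \emph{one-sided} upper bound $\omega_h(\omega_f(s))\le\mathcal K s$ is needed for the converse (the lower bound $\omega_h(\omega_f(s))\ge s$ holds for every homeomorphism), and your worry about a priori Sobolev regularity is unnecessary since the paper works directly with the metric definition, whose equivalence with the analytic one is classical.
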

See Section~\ref{sec:qcmaps} for fuller discussion. It should be noted that for a radial stretching/squeezing homeomorphism $h(x)= {\bf H}(\abs{x}) \frac{x}{\abs{x}}$, ${\bf H}(0)=0$,  we always have
\[ (\omega_h \circ \omega_f) (s) \equiv s\]
Thus it amounts to saying that 
\begin{center}
{\it Quasiconformal mappings are characterized by being comparatively radial strectching/squeezing at every point.}
\end{center}

  At the first glance, the gain/loss rule seems to generalize to deformations of bi-conformal energy. Here  we refute this view, by constructing examples in which  both $\,h\,$ and $\,f\,$  admit the same   modulus of continuity. These examples work well regardless of whether or not the modulus of continuity (given upfront)  is close to the  borderline case $ \,\omega = \omega(t)   \approx \log^{-\frac{1}{n}} \left(1 +  1/t \right)\, $.  Without additional preliminaries, we now can  illustrate this instance with a representative case of Theorem \ref{MainTheorem}.

\begin{theorem} [A Representative Example]\label{BasicExampleTheorem} Consider a modulus of continuity function  $\,\phi \colon  [0, \infty)  \onto  [0, \infty)\,$ defined by the rule
 \begin{equation} \label{BasicExample}
 \phi(s) = \left\{\begin{array}{ll}
  0 & \textnormal{if} \;s = 0  \\
  \left[\, \log  \left( \frac{e}{s} \right)\right]^{-\frac{1}{n}} \; \left [\,\log \log \left(\frac{e^e}{s}  \right)\;\right]^{-1}& \textnormal{if}\;\; 0 < s \leqslant 1\\
  s & \textnormal{if} \; s \geqslant 1\end{array} \right.
 \end{equation}
 Then there exists a deformation of  bi-conformal energy  $\,H \colon \mathbb R^n \onto \mathbb R^n\,$ such that
 \begin{itemize}
 \item  $\,H(0) = 0\;, \;\; H(x) \equiv x \,,\;\textnormal{for}\; |x| \geqslant 1 \,$
 \item  $ |\,H(x_1) \,-\, H(x_2)\,|  \; \, \preccurlyeq\,  \; \phi(|x_1 - x_2|) $\; \;, \; for all $\,x_1 , x_2  \in \mathbb R^n\,$
 \end{itemize}
Its inverse  $\,F \bydef H^{-1}  \colon  \mathbb R^n \onto \mathbb R^n\,$ also admits $\,\phi\,$ as a modulus of continuity,
 \begin{itemize}
 \item  $ |\,F(y_1) \,-\, F(y_2)\,|  \; \, \preccurlyeq\,  \; \phi(|y_1 - y_2|) $\; \;,\;\; for all \,$\,y_1, y_2 \in \mathbb R^n\,$ \;\;\;\footnote{ \,In the above estimates the implied constants depend only on $\,n\,$.}
 \end{itemize}
 Furthermore, $\,\phi\,$ represents  the optimal  modulus of continuity at the origin for  both $\,H\,$ and $\,F\,$; that is,   for every $\,0 \leqslant s < \infty\,$ we have
 \begin{equation}\label{OptimalModulus1}
   \omega_H(0, s) \;  = \;   \phi(s)  =   \omega_F(0, s) \, . 
 \end{equation}
 \end{theorem}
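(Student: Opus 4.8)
The plan is to construct $H$ by hand on $\mathbb B^n$ (extending it by the identity outside) and then grind through the verifications in spherical coordinates. The one structural constraint to keep in mind is that $H$ \emph{cannot} be a radial stretching/squeezing: for any such map $\omega_H(0,\cdot)$ and $\omega_F(0,\cdot)$ are mutually inverse (the relation $\omega_h\circ\omega_f\equiv s$ recorded after Theorem~\ref{thm:qc}), whereas $\phi$ is not an involution — indeed $\phi(\phi(s))>s$ for $0<s<1$, which follows from the elementary inequality $\phi(t)>t$ on $(0,1)$ (itself a consequence of $t\phi'(t)\le(\tfrac1n+\tfrac1e)\phi(t)$ and $\phi(1)=1$, read off from \eqref{BasicExample}). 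I would therefore take $H$ to be radial \emph{along each ray separately}, with a direction-dependent profile interpolating between the stretching $\phi$ and the squeezing $\phi^{-1}$. In coordinates $x=r\xi$, $r=\abs{x}$, $\xi\in\mathbb S^{n-1}$, set
\[
 H(r\xi)=R(r,\xi)\,\xi,\qquad R(r,\xi)=\bigl(1-\lambda(\xi)\bigr)\,\phi(r)+\lambda(\xi)\,\phi^{-1}(r),
\]
where $\lambda\colon\mathbb S^{n-1}\to[0,1]$ is a fixed Lipschitz function equal to $0$ on a spherical cap $C_N$ about a chosen pole and to $1$ on the antipodal cap $C_S$. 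Since $\phi^{-1}(r)<r<\phi(r)$ on $(0,1)$, the function $R(\cdot,\xi)$ is a genuine convex combination of two increasing homeomorphisms of $[0,1]$, hence itself a strictly increasing homeomorphism of $[0,1]$ with $R(0^+,\xi)=0$ and $R(1,\xi)\equiv1$. Consequently $H$ is a homeomorphism of $\overline{\mathbb B^n}$ matching the identity across $\partial\mathbb B^n$, with $H(0)=0$; its inverse is $F(s\eta)=\rho(s,\eta)\,\eta$, where $\rho=\rho(s,\eta)\in[\phi^{-1}(s),\phi(s)]$ is the unique solution of $(1-\lambda(\eta))\phi(\rho)+\lambda(\eta)\phi^{-1}(\rho)=s$.

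Next I would verify finiteness of both conformal energies. In the orthonormal frame adapted to polar coordinates $DH$ is upper triangular, so
\[
 \abs{DH}^2=(\partial_rR)^2+\frac{\abs{\nabla_\xi R}^2}{r^2}+(n-1)\frac{R^2}{r^2},\qquad J(x,H)=\partial_rR\,\Bigl(\tfrac Rr\Bigr)^{n-1}>0 .
\]
Using $R\le\phi(r)$, $\abs{\nabla_\xi R}=\bigl(\phi(r)-\phi^{-1}(r)\bigr)\abs{\nabla_\xi\lambda}\preccurlyeq\phi(r)$ and $\partial_rR\le\phi'(r)+(\phi^{-1})'(r)$, one gets
\[
 \mathbf E_{\mathbb B^n}[H]\;\preccurlyeq\;\int_0^1\bigl[(\phi'(r))^n+((\phi^{-1})'(r))^n\bigr]r^{n-1}\,\mathrm dr+\int_0^1\frac{\phi(r)^n}{r}\,\mathrm dr .
\]
This is exactly where the shape of $\phi$ matters: the substitution $u=\log(e/r)$ turns $\int_0^1 r^{-1}\phi(r)^n\,\mathrm dr$ into $\asymp\int_1^\infty u^{-1}(\log u)^{-n}\,\mathrm du<\infty$, whereas the borderline $\phi(s)\approx[\log(e/s)]^{-1/n}$ (no double logarithm) would produce the divergent $\int u^{-1}\,\mathrm du$; the integral of $(\phi')^nr^{n-1}$ converges for the same reason, and that of $((\phi^{-1})')^nr^{n-1}$ converges trivially because $\log(1/\phi^{-1}(r))\sim r^{-n}$, so $\phi^{-1}$ and $(\phi^{-1})'$ decay faster than any power of $r$. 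For $F$ the only new ingredient is $\partial_s\rho=[(1-\lambda)\phi'(\rho)+\lambda(\phi^{-1})'(\rho)]^{-1}$, which equals $(\phi^{-1})'(s)$ on $C_N$, $\phi'(s)$ on $C_S$, and on the transition zone — where $\rho$ is pushed close to $\phi(s)$ — is dominated by a combination of $\phi'(s)$, $(\phi^{-1})'(s)$ and $\phi(s)/s$; hence $\mathbf E_{\mathbb B^n}[F]$ is bounded by the same three convergent integrals. Since both maps are smooth off the origin and the point $\{0\}$ is removable for $\W^{1,n}$, we get $H,F\in\W^{1,n}_{\loc}(\mathbb R^n,\mathbb R^n)$, so $H$ is a deformation of bi-conformal energy.

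For the modulus-of-continuity bound $\abs{H(x_1)-H(x_2)}\preccurlyeq\phi(\abs{x_1-x_2})$ I would run a routine case analysis: if $x_1,x_2$ are both away from the origin then $H$ is locally Lipschitz there and $\phi(t)\ge t$ for $t\le1$ closes the estimate; if $\abs{x_1-x_2}\ge\tfrac12\max(\abs{x_1},\abs{x_2})$ then $\abs{H(x_1)-H(x_2)}\le\phi(\abs{x_1})+\phi(\abs{x_2})\preccurlyeq\phi(\abs{x_1-x_2})$ by monotonicity and the doubling property of $\phi$; and if $\abs{x_1-x_2}<\tfrac12\max(\abs{x_1},\abs{x_2})$ then $\abs{x_1},\abs{x_2}$ are comparable, the angular separation of $x_1,x_2$ is $\preccurlyeq\abs{x_1-x_2}/\abs{x_1}$, and combining $\abs{\nabla_\xi R}\preccurlyeq\phi(\abs{x})$ and $\partial_rR\le\phi'(r)+(\phi^{-1})'(r)$ with the elementary facts $t\phi'(t)\preccurlyeq\phi(t)$ and $\phi(t)/t$ decreasing gives the bound. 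The identical argument, with $\rho$ replacing $R$ and $\phi^{-1}(s)\le\rho(s,\eta)\le\phi(s)$, yields $\abs{F(y_1)-F(y_2)}\preccurlyeq\phi(\abs{y_1-y_2})$. Finally the optimal moduli at the origin are read off directly: for $0<s\le1$,
\[
 \omega_H(0,s)=\max_{\xi}\bigl[(1-\lambda(\xi))\phi(s)+\lambda(\xi)\phi^{-1}(s)\bigr]=\phi(s),
\]
the maximum being attained exactly on $C_N$ since $\phi^{-1}(s)<\phi(s)$; and for $s\ge1$ trivially $\omega_H(0,s)=s=\phi(s)$. Likewise $\omega_F(0,s)=\max_\eta\rho(s,\eta)=\phi(s)$: if $\rho>\phi(s)$ then $\phi(\rho)>\phi(\phi(s))>s$ and $\phi^{-1}(\rho)>\phi^{-1}(\phi(s))=s$, so the convex combination defining $s$ would exceed $s$, a contradiction, and equality holds exactly on $C_S$. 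This is \eqref{OptimalModulus1}. The step I expect to be the main obstacle is the simultaneous finiteness of $\mathbf E_{\mathbb B^n}[H]$ and $\mathbf E_{\mathbb B^n}[F]$: the demand $\omega_H(0,\cdot)=\omega_F(0,\cdot)=\phi$ forces the map to be genuinely non-radial, which puts it on the brink of non-integrability of $\abs{DH}^n$, and it is precisely this that dictates the double-logarithmic correction in \eqref{BasicExample}; within that, the single most delicate computation is the energy bound for $F$ on the transition cap, where $\rho(s,\eta)$ lies strictly between $\phi^{-1}(s)$ and $\phi(s)$ and $\partial_s\rho$ must be estimated by hand, the rest being bookkeeping with elementary properties of $\phi,\phi',\phi^{-1},(\phi^{-1})'$.
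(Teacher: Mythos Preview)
Your instinct — build a map that acts like the radial stretching $r\mapsto\phi(r)$ along one direction and like the squeezing $r\mapsto\phi^{-1}(r)$ along the opposite direction — is exactly the mechanism the paper exploits. But the specific realisation you propose, a ray-preserving map $H(r\xi)=R(r,\xi)\xi$ with the \emph{linear} blend $R=(1-\lambda)\phi+\lambda\phi^{-1}$, does not give $F\in\mathscr W^{1,n}$. Fix $\eta$ in the transition annulus with $\lambda(\eta)=1-\delta$, $\delta>0$ small, and look at small $r$: since $\psi:=\phi^{-1}$ decays faster than any power, one has $R\approx\delta\,\phi(r)$ and $\partial_rR\approx\delta\,\phi'(r)$, while $|\nabla_\xi R|\approx|\nabla\lambda|\,\phi(r)$. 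Writing $\int|DF|^n=\int K_H$ with $K_H=|(DH)^{-1}|^n J_H$, the angular entry of $(DH)^{-1}$ is $|\nabla_\xi R|/(R\,\partial_rR)\approx|\nabla\lambda|/(\delta^{2}\phi')$, and this term dominates. A direct computation gives
\[
\int_0^1 K_H\,r^{n-1}\,\textnormal d r\;\approx\;\frac{|\nabla\lambda|^n}{\delta^{\,n}}\int_0^1\frac{\phi^{\,n-1}}{(\phi')^{\,n-1}}\,\textnormal d r,
\]
where the last integral is finite by Condition~$(\mathbf C_2)$. Integrating over the transition annulus (so over $\delta\in(0,\delta_0)$) produces $\int_0^{\delta_0}\delta^{-n}\,\textnormal d\delta=\infty$. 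In short, your claim that on the transition cap $\partial_s\rho$ is ``dominated by a combination of $\phi'(s)$, $(\phi^{-1})'(s)$ and $\phi(s)/s$'' is not what happens: the angular derivative of $F$ carries an extra factor $\delta^{-1}$ coming from $R\approx\delta\phi$, and this is not integrable.

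The paper's construction sidesteps this by \emph{not} preserving rays. It works in cylindrical coordinates $(x,t)\in\mathbb R^{n-1}\times\mathbb R$ and on the upper cone $\mathcal C_+=\{|x|+t\le1,\,t\ge0\}$ sets $H(x,t)=\bigl(x,\,t\,\phi(|x|+t)/(|x|+t)\bigr)$, i.e.\ the first $n-1$ coordinates are left untouched. Two things follow from this special shape of $DH$: the Jacobian is uniformly bounded below, $J_H\ge 1/M$, and one has the pointwise inequality $|D^\sharp H|\le\sqrt{n-1}\,|DH|$. Together these give $K_H\preccurlyeq|DH|^n$, so $\int|DF|^n=\int K_H\preccurlyeq\int|DH|^n<\infty$ with no further work. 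The symmetry $\omega_H(0,\cdot)=\omega_F(0,\cdot)=\phi$ is then obtained by gluing: on the lower cone one defines $H$ to be the reflection $\mathfrak r\circ F\circ\mathfrak r$ of the inverse, so that the worst behaviour of $H$ (namely $\phi$) sits in $\mathcal C_+$ and the worst behaviour of $F$ (again $\phi$) sits in $\mathcal C_-$. The global $\phi$-modulus for $F$ still requires a separate argument (the inverse is only implicitly defined), which the paper carries out via an integral lemma for non-increasing functions on line segments.
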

 \begin{remark} More specifically, letting $\, \psi : [0, \infty) \onto [0,\infty) \,$  denote the inverse of $\,\phi\,$,   the maxima in (\ref{OptimalModulus1}) are attained on the vertical axes, where we have
 \begin{equation}\label{OptimalOscillationH}
 H(0,...,0,\; x_n)  =  \left\{\begin{array}{ll}
  (0, ... , 0,  \;\phi(x_n)\,)  & \textnormal{if} \;\; x_n \geqslant 0  \\
  (0, ... , 0 , \;\psi (x_n)\,) &  \textnormal{if}\; \; x_n \leqslant 0\end{array} \right.
 \end{equation}
\begin{equation}\label{OptimalOscillationF}
 F(0,...,0,\; y_n)  =  \left\{\begin{array}{ll}
  (0, ... , 0, \;\psi(y_n)\,)  & \textnormal{if} \;\; y_n \geqslant 0  \\
  (0, ... , 0 , \;\phi(y_n)\, ) &  \textnormal{if} \;\; y_n \leqslant 0\end{array} \right.
 \end{equation}
It is worth noting  here that in our representative examples the inverse function $\, \psi :  [ 0, \infty)  \onto [0, \infty)\,$ will be even $\,\mathcal C^\infty\,$-smooth near $\,0\,$.
\end{remark}
There are many more reasons for studying  deformations  of bi-conformal energy.  First, a homeomorphism $\,h\colon  \mathbb X \rightarrow \mathbb Y\,$ in $\,\mathscr W^{1,n}(\mathbb X, \mathbb Y) \,$ whose inverse $\, f \bydef h^{-1} \colon  \mathbb Y \rightarrow \mathbb X\,$ also lies in  $\,\mathscr W^{1,n}(\mathbb Y, \mathbb X) \,$ include ones with  \textit{integrable inner distortion}, see~\eqref{eq:totalenergy2}.   From this point of view our study not only expands  the theory of quasiconformal mappings but also  mappings of finite distortion. The latter can be traced back to the early paper by Goldstein and Vodop'yanov  \cite{GoldVod} (1976)\, who established  continuity of such mappings. However, a systematic study of mappings of finite distortion has begun in 1993 with planar mappings of integrable distortion \cite{IwaniecSverak} (Stoilow factorization), see also the monographs \cite{AIMb, IMb, HKb}. The optimal modulus of continuity for mappings of finite distortion and their inverse deformations have been studied in numerous publications~\cite{CaH, ClH, HeK, Hi, IwKosOnn1, KoO, OnT, OnZ}. In all of these results, except in~\cite{OnT}, the sharp modulus of continuity is obtained among the class of radially symmetric mapping. 

In a different direction, the essence of elasticity is reversibility. All materials have limits  of the admissible distortions. Exceeding such a limit one breaks the internal structure of the material (permanent damage).  Here we  take on stage the materials of \textit{bi-conformal stored-energy}
\begin{equation}\label{TotalEnergy1}
\mathbf E_{\mathbb X \mathbb Y} [ h,f]  \,\bydef \, \mathbf E_\mathbb X [h ] +\,\mathbf E_\mathbb Y [f ] = \int_\mathbb X \,| Dh(x)|^n \textnormal{d} x  \;+\;   \int_\mathbb Y \,| Df(y)|^n \textnormal{d} y
\end{equation}
The bi-conformal energy  reduces to an integral functional defined solely over the domain $\,\mathbb X\,$ by the rule:
\begin{equation}\label{eq:totalenergy2}
\mathbf E_{\mathbb X \mathbb Y} [ h,f]  \,  = \mathscr E_\mathbb X[h] \,\bydef\,   \int_\mathbb X \,\Big\{\,\big|\, Dh(x)\,\big|^n \, +\, \frac{\big|\,D^\sharp h(x)\,\big|^n}{ \; \left [\,\mathbf J_h(x)\,\right]^{n-1} }\, \Big\}\,\; \textnormal{d} x
\end{equation}
where the ratio term represents the {inner distortion} of $\,h\,$. For more details we refer the reader to \cite{AIMO}.  Examples abound in which one can return the deformed body to its original shape with conformal energy, but not necessarily via the inverse mapping $\,f = h^{-1}  \colon \mathbb Y \onto \mathbb X\,$, because $\,f\,$ need not even belong to $\,\mathscr W^{1,n}(\mathbb Y, \mathbb R^n) \,$.  This typically occurs when the boundary of the deformed configuration (like a ball with a straight line slit cut)  differs topologically from the boundary of the reference configuration (like a ball without a cut)~\cite{IOnn1, IOhyper, IOnn2}.   We believe that the geometric/topological obstructions for  reversibility of elastic deformations might be of interest in mathematical models of nonlinear elasticity (NE) \cite{Anb, Bac, Cib, MHb}. In our setting, by virtue of the Hooke's Law, it is  naturally to study deformations of bi-conformal energy.  One of the  important  problems in nonlinear elasticity is whether or not a  radially symmetric solution of a rotationally invariant minimization problem is indeed the absolute minimizer. In the case of bi-conformal energy this is proven to be the case in low dimension models  ($n=2,3$)~\cite{IOhy}. The radial symmetric solutions, however, may fail to be absolute minimizers if $n \ge 4$~\cite{IOhy}. Several more papers, in the intersection of NE and GFT, are devoted to understand the expected radial symmetric properties~\cite{AIM, Ba1, CG, HLW, Ho, IKOni, IKO3, IOne, IwOnMemo, JM, JK, KO, Me, MS, S, SiSp, St}.

\section{Quick review of the modulus of continuity}
Let us recall the concept of \textit{modulus of continuity}, also known as \textit{modulus of oscillation}; the concept introduced by H. Lebesgue~\cite{Lebesgue} in 1909.\\
We are dealing with continuous mappings $\,h : \mathbb X \rightarrow \mathbb Y\,$ between subsets $\,\mathbb X \subset \mathscr X\,$ and $\,\mathbb Y \subset \mathscr Y \,$ of normed spaces $\,(\mathscr X ,  | \cdot |)\,$ and $\,(\mathscr Y , \norm \cdot \norm)\,$.

A modulus of continuity  is any continuous function $\, \omega : [ 0 , \infty ) \rightarrow [0, \infty)\,$ that is strictly increasing and $\, \omega(0) = 0\,$.
\begin{definition}\label{LocalOscillations}

A  continuous mapping $\,h : \mathbb X \rightarrow \mathbb Y\,$  is said to admit $\,\omega\,$ as its (local) modulus of continuity at the point $\,x_\circ \in \mathbb X\,$ if
\begin{equation}\label{OmegaContiunuity}
\norm h(x) - h(x_\circ) \norm \; \preccurlyeq \;\omega(| x - x_\circ|)\;\,, \;\; \textnormal{for all} \,\, x \in \mathbb X
\end{equation}
Here the implied constant may depend on $\,x_\circ\,$, but not on $\,x\,$.
In short,  $\,h\,$ is $\,\omega\,$-continuous at the point $\,x_\circ\,$. If this inequality holds for all $\, x, x_\circ \in \mathbb X\,$ with an  implied constant independent of $\,x\,$ and $\,x_\circ\,$  then  $\,h\,$ is said to admit $\,\omega\,$ as its (global) modulus of continuity in $\,\mathbb X\,$.
\end{definition}

 \begin{definition}[Optimal Modulus of Continuity]  Every uniformly continuous function $\,h: \mathbb X \rightarrow \,\mathbb Y\,$ admits the  optimal  modulus of continuity at a given point $\, x_\circ \in \mathbb X\,$, given by the rule:
 \begin{equation}\label{OptimalModulusOfContinuity}
 \omega_{_h}(x_\circ ; t) \, \bydef \; \sup \{\norm\,h(x) - h(x_\circ) \,\norm \colon  x \in \mathbb X\;,\; |x - x_\circ | \leqslant t  \}
 \end{equation}
 No implied constant is involved in this definition. Similarly, the function
 \begin{equation}\label{GlobalOptimalModulusOfContinuity}
 \Omega_{_h}(t) \, \bydef \; \sup \{\norm\,h(x) - h(x_\circ) \,\norm \colon x, x_\circ \in \mathbb X\;,\; |x - x_\circ | \leqslant t  \}
 \end{equation}
is referred to as \textit{ (globally) optimal modulus of continuity} of $\,h\,$ in  $\,\mathbb X\,$.
 \end{definition}

\begin{definition}[Bi-modulus of Continuity] The term \textit{bi-modulus of continuity} of a homeomorphism $\,h : \mathbb X \onto \mathbb Y\,$ refers to a pair $ (\phi, \psi)\,$  of continuously increasing functions $\,\phi : [0, \infty) \onto [0, \infty)\,$  and $\,\psi : [0, \infty) \onto [0, \infty)\,$ in which $\,\phi\,$ is a modulus of continuity of $\,h\,$ and $\,\psi\,$ is a modulus of continuity of the inverse map $\, f \bydef h^{-1} : \mathbb Y \onto \mathbb X\,$.  Such a pair is said to be the optimal bi-modulus of continuity at the point $\,(x_\circ,y_\circ)  \in \mathbb X \times \mathbb Y\,$, $\, y_\circ = h(x_\circ)\,$,  if  $\,\phi(t) = \omega_{_h}(x_\circ ;t)\,$ and $\,\psi(s) = \omega_{_f}(y_\circ ;s)\,$

\end{definition}

\section{Quasiconformal Mappings}\label{sec:qcmaps}

 Let us take a quick look at the radial stretching/squeezing  homeomorphism \, $\,h : \mathbb R^n \onto \mathbb R^n\,$ defined by:
 \begin{equation}
  h(x) =  {\bf H}(|x|)\,\frac{x}{|x|}\;, \; \; \textnormal{for}\; x \in \mathbb R^n
 \end{equation}
where the function $\,{\bf H}: [0, \infty) \onto [0, \infty)\,$ (interpreted as radial stress function) is continuous and strictly increasing.  Its inverse $\,f \bydef h^{-1} : \mathbb R^n \onto \mathbb R^n\,$  becomes a squeezing/stretching homeomorphism of the form:

\begin{equation}
 f(y) =  {\bf F}(|y|)\,\frac{y}{|y|}\;, \; \; \textnormal{for}\; y \in \mathbb R^n
 \end{equation}
where $\,{\bf F}: [0, \infty) \onto [0, \infty)\,$ stands for the inverse function of $\,{\bf H}\,$. These two radial stress functions are exactly the optimal moduli of continuity at $\,0 \in \mathbb R^n\,$ of $\,h\,$ and $\,f\,$, respectively. By the definition,
\[
\begin{split}
\omega_h(t) &  \bydef \omega_h (0, t) = \max_{|x| = t}\,|h(x)|\; =\; {\bf H}(t) \\ 
\omega_f(s) &  \bydef \omega_f (0, s) = \max_{|y| = s}\,|f(y)|\; =\; {\bf F}(s) \, . 
\end{split}
\]
Therefore
\begin{equation}
\omega_f (\omega_h(t) )    \; \equiv t\,  \;\textnormal{for all}\;\; t \geqslant 0\;,\;\;\;\;  \textnormal{and}\; \;  \omega_h (\omega_f(s))  \equiv s\,\;\;\;\textnormal{for all}\;\;\;s \geqslant 0.
\end{equation}
The above identities admit of a simple interpretation:
\begin{center}\textsl{The better is the optimal modulus of continuity of $\, h\,$, the worse is \\the optimal modulus of continuity of its inverse map $\, f ,$ and vice versa.  }
\end{center}
Look at  the power type stretching $\,h(x) = |x|^N \frac{x}{|x|}\,$  and  $\,f(y) = |y|^{\frac{1}{N}} \frac{y}{|y|}\,. $ \\
To an extent, this interpretation pertains to all quasiconformal homeomorphisms. There are three main equivalent definitions for quasiconformal mappings: metric, geometric, and analytic. The {\it analytic definition}~\eqref{eq:quasianalytic} was first considered by Lavrentiev in connection with elliptic systems of partial differential equations. Here we will relay on  the {\it metric definition}, which says that ``infinitesimal balls are transformed to infinitesimal ellipsoids of bounded eccentricity.'' The interested reader is referred to~\cite[Chapter 3.]{AIMb} to find more about the foundations of quasicoformal mappings.
\begin{definition}\label{DefKquasiConf}

 Let $\,\mathbb X\,$ and $\,\mathbb Y\,$ be domains in $\,\mathbb R^n\,, \, n\geqslant 2\,$, and $\,h :  \mathbb X  \onto \mathbb Y\,$ a homeomorphism. For every point $\,x_\circ \in \mathbb X\,$ we define.

 \begin{equation}\label{TheHratio}
 \mathcal H_h(x_\circ ,  r ) \;\bydef\; \frac{ \max_{|x - x_\circ| = r} \;|h(x) - h(x_\circ)|}{\min_{|x - x_\circ| = r}\; |h(x) - h(x_\circ)|}
 \end{equation}
 whenever $\,0 < r < \textnormal{dist} (x_\circ, \partial \mathbb X)\,$.   Also define
 \begin{equation}
  1 \leqslant \mathcal H_h(x_\circ) \;  \bydef \; \limsup_{r\rightarrow 0\,} \mathcal H_h(x_\circ ,  r )  \leqslant \infty
 \end{equation}
 and call it the \textit{linear dilatation} of $\,h\,$ at $\,x_\circ\,$. If, furthermore,
 \begin{equation}
  \mathcal K_h \bydef \sup_{x_\circ \in \mathbb X} \mathcal H_h(x_\circ)\, < \infty
 \end{equation}
 then we call $\,\mathcal K_h\,$ the \textit{maximal linear dilatation} of $\,h\,$ in $\,\mathbb X\,$ and $\,h\,$ a quasiconformal mapping. Finally, $\,h\,$ is $\,K\,$-quasiconformal, $\,1\leqslant K < \infty\,$  if
\begin{equation}
 \textnormal{ess-}\!\!\sup _{x_\circ \in \mathbb X} \mathcal H_h(x_\circ)\, \leqslant K
 \end{equation}
 \end{definition}
It should be noted that the inverse map  $\, f \bydef h^{-1}  : \mathbb Y  \onto \mathbb X\,$ is also $\,K\,$-quasiconformal.

Next, we invoke the optimal modulus of continuity at a point $\,x_\circ \in \mathbb X\,:$
$$
\omega_h(t)  \bydef  \omega_h(x_\circ; t) \, = \,\max_{|x - x_\circ| = t}|h(x) - h(x_\circ)|\,,\, \textnormal{for}\, \,0 \leqslant t < t_\circ  \bydef \textnormal{dist} (x_\circ ;  \partial \mathbb X)\,.
$$
 This defines a continuous strictly increasing function $\,\omega_h  :  [0, t_\circ)  \onto [0, s_\circ)\,$, where $\, s_\circ \bydef \textnormal{dist} (y_\circ ; \partial \mathbb Y).\,$
Similar definitions apply to the inverse map $\, f : \mathbb Y \onto \mathbb X\,$ which is also $\,K\,$-quasiconformal. Its optimal modulus of continuity at the image point $\,y_\circ =  h(x_\circ)  \,$ is given by
$$
\omega_f(s) \bydef  \omega_f(y_\circ; s) \, = \,\max_{|y - y_\circ| = s}|f(y) - f(y_\circ)|\,,\;\; \textnormal{for}\; 0 \leqslant s  < s_\circ
$$
Therefore, both compositions $\, \omega_f (\omega_h(t)) \,$  and $\, \omega_h (\omega_f(s))\,$ are well defined for  $\,0 \leqslant t < t_\circ\,$ and $\,   0 \leqslant s  < s_\circ\,$, respectively.
Unlike the radial stretchings,  the function $\,\omega_f(s)\,$ is generally not the
inverse of $\,\omega_h(t)\,$, but very close to it. Namely, the optimal modulus of continuity
of $\,h\,$ and that of $\,f\,$ are \textit{quasi-inverse} to each other. Let us make this statement
more precise by the following theorem.
\begin{theorem} [Local quasi-inversion] \label{LocalQuasiInversion}Let a map $\,h : \mathbb X \onto \mathbb Y\,$ be $\,K\,$-quasiconformal and $\,f : \mathbb Y \onto \mathbb X\,$ denote its inverse. Then there is a constant $\, \mathscr K  = \mathscr K(n,K)\,\geqslant 1\,$ such that for every point $\,x_\circ \in \X \,$ and its image $\,y_\circ = h(x_\circ) \in \Y\,$ it holds
\begin{equation} \label{QuasiInverseBounds}
 \mathscr K^{-1} s \; \leqslant\,  \omega_h (\omega_f(s))  \;\leqslant\, \mathscr K  s\;\;\;\; \textnormal{and}\;\;\;\;  \mathscr K^{-1} t \;\leqslant\,  \omega_f (\omega_h(t))  \;\leqslant\;\mathscr K  t
\end{equation}
 whenever $\,0\leqslant t \leqslant t(x_\circ)\,$ and   $\, 0 \leqslant s \leqslant s(y_\circ)\,$. Here the upper bounds  positive numbers $t(x_\circ)$ and $s(y_\circ)$,   depend only on $\,\textnormal{dist}(x_\circ; \partial \mathbb X)\,$ and $\,\textnormal{dist} (y_\circ; \partial \mathbb Y)\,$, respectively.
\end{theorem}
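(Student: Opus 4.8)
The plan is to reduce everything to the linear dilatation inequality at the single point $x_\circ$ (and at $y_\circ$), so that no global machinery beyond Definition \ref{DefKquasiConf} is needed. Fix $x_\circ\in\X$, set $y_\circ=h(x_\circ)$, and introduce the auxiliary ``minimum'' radial functions
\[
\mu_h(t)\bydef\min_{|x-x_\circ|=t}|h(x)-h(x_\circ)|,\qquad \mu_f(s)\bydef\min_{|y-y_\circ|=s}|f(y)-f(y_\circ)|.
\]
By definition of $\omega_h$ and $\mu_h$ and continuity of $h$, the image $h(\{|x-x_\circ|\le t\})$ contains the ball $\{|y-y_\circ|<\mu_h(t)\}$ and is contained in the ball $\{|y-y_\circ|\le\omega_h(t)\}$; applying $f=h^{-1}$ gives the two key inclusions
\[
\omega_f(\mu_h(t))\le t\le \mu_f(\omega_h(t)),
\]
and symmetrically $\omega_h(\mu_f(s))\le s\le \mu_h(\omega_f(s))$. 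These are purely topological — they use only that $h$ is a homeomorphism — and they are exactly the ``quasi-inverse up to the gap between $\omega$ and $\mu$'' statements. The whole theorem will follow once that gap is controlled.

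Next I would supply the quantitative dilatation bound. The hypothesis is that $h$ is $K$-quasiconformal in the metric sense, i.e. $\mathcal H_h(x_\circ)=\limsup_{r\to0}\mathcal H_h(x_\circ,r)\le\mathcal K_h<\infty$; but for the stated conclusion we actually want a bound on $\mathcal H_h(x_\circ,r)$ for \emph{all} sufficiently small $r$, not merely in the $\limsup$. Here one invokes the standard equivalence of the metric, geometric and analytic definitions (\cite[Chapter 3]{AIMb}): a metrically quasiconformal homeomorphism is quasiconformal in the usual sense, hence satisfies a \emph{global} linear-dilatation bound $\mathcal H_h(x,r)\le H=H(n,K)$ for every $x$ and every $r<\dist(x,\partial\X)$, with $H$ depending only on $n$ and $K$. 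In particular $\omega_h(t)\le H\,\mu_h(t)$ and $\omega_f(s)\le H\,\mu_f(s)$, uniformly in the base point. Feeding $\mu_h(t)\ge H^{-1}\omega_h(t)$ into $\omega_f(\mu_h(t))\le t$ and monotonicity of $\omega_f$ is not immediately enough (the argument of $\omega_f$ shrank), so instead I combine the inclusions the other way: from $t\le\mu_f(\omega_h(t))\le\omega_f(\omega_h(t))$ I get the lower bound $\omega_f(\omega_h(t))\ge t$ for free, and from $\omega_f(\mu_h(t))\le t$ together with $\mu_h(t)\ge H^{-1}\omega_h(t)$ and the inclusion applied to $f$ at scale $H^{-1}\omega_h(t)$ — more precisely using $\mu_h(t)\le\omega_h(t)$ and the dilatation bound for $f$ in the form $\omega_f(r)\le H\,\mu_f(r)$ — one obtains $\omega_f(\omega_h(t))\le H^{2}\,\omega_f(\mu_h(t))\le H^{2}t$. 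Setting $\mathscr K=H^{2}=\mathscr K(n,K)$ yields $\mathscr K^{-1}t\le\omega_f(\omega_h(t))\le\mathscr K t$, and the companion inequality for $\omega_h(\omega_f(s))$ is obtained by interchanging the roles of $h$ and $f$ (legitimate since $f$ is also $K$-quasiconformal, as noted after Definition \ref{DefKquasiConf}).

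Finally I would pin down the ranges of validity. The composition $\omega_f(\omega_h(t))$ makes sense only while $\omega_h(t)<s_\circ\bydef\dist(y_\circ,\partial\Y)$, and the dilatation bound $\mathcal H_h(x_\circ,r)\le H$ was used at radii $r\le t$; so the inequalities hold for $0\le t\le t(x_\circ)$ where $t(x_\circ)$ is chosen small enough that $\omega_h(t(x_\circ))\le s_\circ$ and $t(x_\circ)<t_\circ\bydef\dist(x_\circ,\partial\X)$ — a threshold depending only on these two distances, as claimed. Symmetrically for $s(y_\circ)$. The main obstacle, and the only non-elementary ingredient, is the promotion of the pointwise $\limsup$ bound $\mathcal H_h(x_\circ)\le\mathcal K_h$ to the uniform all-scales bound $\mathcal H_h(x,r)\le H(n,K)$: this is precisely where the deep equivalence of the definitions of quasiconformality enters, and everything else is the soft topological inclusion argument above plus bookkeeping of constants.
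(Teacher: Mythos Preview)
Your overall strategy --- bound the ratio $\omega_h/\mu_h$ uniformly by a constant $H=H(n,K)$ and combine this with the topological inclusions between $\omega$ and $\mu$ --- is exactly the paper's approach. The paper obtains the all-scales dilatation bound by first invoking Gehring's extension theorem to reduce to $\R^n$ and then applying the three-points condition (Propositions~\ref{ThrePointsRatio} and~\ref{MaxMin}); you obtain it by invoking the equivalence of the metric, geometric and analytic definitions. Both are legitimate ways of importing the same non-elementary fact, and the remainder of the argument is the same soft topology.

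There is, however, a genuine gap in your upper-bound step. The chain
\[
\omega_f(\omega_h(t))\;\le\;H^{2}\,\omega_f(\mu_h(t))\;\le\;H^{2}t
\]
is not justified as written: from $\omega_h(t)\le H\mu_h(t)$ and monotonicity you only get $\omega_f(\omega_h(t))\le\omega_f\!\big(H\mu_h(t)\big)$, and passing from $\omega_f(Hr)$ to $H\omega_f(r)$ (or to $H^{2}\omega_f(r)$) would require a doubling estimate for $\omega_f$ that has not been established at this point. The fix is simpler than what you wrote and is precisely the paper's computation: apply the dilatation bound for $f$ at the radius $r=\omega_h(t)$ to obtain
\[
\omega_f(\omega_h(t))\;\le\;H\,\mu_f(\omega_h(t)),
\]
and then observe that $\mu_f(\omega_h(t))\le t$: if $x^\ast$ realises the maximum in $\omega_h(t)$, i.e.\ $|x^\ast-x_\circ|=t$ and $|h(x^\ast)-y_\circ|=\omega_h(t)$, then $y^\ast=h(x^\ast)$ lies on the sphere $|y-y_\circ|=\omega_h(t)$ with $|f(y^\ast)-x_\circ|=|x^\ast-x_\circ|=t$. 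Together with your own inclusion $t\le\mu_f(\omega_h(t))$ this gives the exact identity $\mu_f(\omega_h(t))=t$, whence $\omega_f(\omega_h(t))\le H\,t$ with constant $H$ rather than $H^{2}$. This is exactly the argument the paper carries out (phrased there as ``$|h(x)|\le\mathscr K\,|h(f(y))|$ since $|x|=|f(y)|$'').
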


Before proceeding to the proof, we recall a very useful Extension Theorem by F. W. Gehring  \cite{GerExtension},   see also the book by J. V\"{a}is\"{a}l\"{a} \cite{Vab} (Theorem 41.6). This theorem allows us to reduce a local quasiconformal problem to an analogous problem for mappings defined in the entire space $\,\mathbb R^n\,$.

\begin{lemma} [F. W.  Gehring] Every  quasiconformal map $\,h : \mathbb B(x_\circ, 2 r)  \into \mathbb R^n\,$ defined in a ball $\,\mathbb B(x_\circ, 2 r) \subset \mathbb R^n\,$ admits a quasiconformal mapping $\,h' :  \mathbb R^n \onto \mathbb R^n\,$ which equals $\,h\,$ on $\,\mathbb B(x_\circ,  r)\,$. The dilatation of $h'$  depends only  that of $h$ and the dimension  $\,n\,$.
\end{lemma}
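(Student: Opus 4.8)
The plan is to normalize the problem to a single ball centred at the origin, read off from the distortion theory of quasiconformal maps the quantitative geometry of the images of a few concentric spheres, and then build the extension on the complement of a ball by a collaring argument that exploits the slack in the hypothesis: $h$ is given on $\mathbb B(x_\circ,2r)$ but need only be reproduced on $\mathbb B(x_\circ,r)$. Concretely, I would first compose with a translation and a dilation of the source and with a similarity of the target --- none of which changes the dilatation or the conclusion --- to arrange $x_\circ=0$, $r=1$, and, using that $h$ is a homeomorphism on a neighbourhood of $\overline{\mathbb B^n}$, that $0\in h(\mathbb B^n)$ and $\diam h(\overline{\mathbb B^n})=1$; in particular $h(\overline{\mathbb B^n})\subseteq\mathbb B^n=\mathbb B(0,1)$.

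\emph{Step 1: geometry of the image spheres.} Because $\overline{\mathbb B(0,7/4)}\Subset\mathbb B(0,2)$, the restriction of $h$ to $\mathbb B(0,7/4)$ is $\eta$-quasisymmetric with $\eta=\eta(n,K)$ --- the standard quantitative principle that a $K$-quasiconformal map is quasisymmetric on compactly contained subdomains (V\"{a}is\"{a}l\"{a}, cf. \cite{Vab}); equivalently one argues directly with the conformal modulus of the round rings $\{a<|x|<b\}$ inside $\mathbb B(0,2)$, its quasi-invariance under $h$, and the Gr\"{o}tzsch/Teichm\"{u}ller bounds for rings separating two continua. From this I would record, with all constants depending only on $n$ and $K$: the image $h(\mathbb B(0,3/2))$ is a quasiball sandwiched between two concentric balls of controlled ratio, and its boundary quasisphere $\Sigma\bydef h(\mathbb S^{n-1}(0,3/2))$ carries a $K$-quasiconformal \emph{bicollar} --- namely $h$ restricted to the round bicollar $\{5/4<|x|<7/4\}\Subset\mathbb B(0,2)$ --- with both complementary collars of definite relative thickness.

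\emph{Step 2: the extension.} I would set $h'\bydef h$ on $\overline{\mathbb B(0,3/2)}$ (this already gives $h'=h$ on $\mathbb B(0,1)$) and construct $h'$ on $\mathbb R^n\setminus\mathbb B(0,3/2)$ as a $c(n,K)$-quasiconformal homeomorphism \emph{onto} $\mathbb R^n\setminus h(\mathbb B(0,3/2))$ agreeing with $h$ on the round sphere $\mathbb S^{n-1}(0,3/2)$. The two pieces then paste to a continuous bijection of $\mathbb R^n$ onto $\mathbb R^n$, hence (invariance of domain) to a homeomorphism, and it is quasiconformal --- with constant still controlled by $n$ and $K$ --- because a round sphere is removable for quasiconformality. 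For the exterior piece I would first keep $h$ on the collar $\{3/2\le|x|\le 7/4\}$, which reduces the task to a quasiconformal parametrization of the complement of the $K$-quasiball $Q\bydef h(\mathbb B(0,7/4))$ by $\{|x|\ge 7/4\}$ matching the $\eta(n,K)$-quasisymmetric boundary map $h|_{\mathbb S^{n-1}(0,7/4)}$; applying a M\"{o}bius transformation that carries a deep interior point of $Q$ to $\infty$ turns this into the problem of quasiconformally extending an $\eta$-quasisymmetric parametrization of the boundary of a controlled quasiball \emph{into its complementary domain}. This last step is the quasiconformal collaring/Schoenflies machinery of Gehring and of Tukia--V\"{a}is\"{a}l\"{a} (for $n=2$ it is the Ahlfors--Beurling boundary extension together with the measurable Riemann mapping theorem); the bicollar from Step 1 is precisely what makes the extension go through, with dilatation bounded in terms of $n$ and $\eta$ only, hence of $n$ and $K$ only.

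\emph{Where the difficulty lies.} The genuine work is that last reduction. An $\eta$-quasisymmetric boundary parametrization is in general only H\"{o}lder continuous, so no naive radial interpolation toward $\infty$ is quasiconformal; one must use the quasiconformal collar of $\partial Q$ furnished by $h$ as a buffer and carry out a Whitney-type ``conical'' construction, controlling the dilatation of each building block and of the interfaces between consecutive blocks by capacity/modulus estimates --- this bookkeeping is the technical heart of the theorem. By contrast the distortion estimates of Step 1, the removability of a round sphere for quasiconformality, and the Sobolev/ACL verifications at the gluing sphere are routine, and I would merely sketch them.
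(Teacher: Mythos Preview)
The paper does not prove this lemma at all: it is stated as a citation of Gehring's extension theorem \cite{GerExtension} (see also V\"ais\"al\"a \cite{Vab}, Theorem~41.6) and used as a black box to reduce the local problem in Theorem~\ref{LocalQuasiInversion} to the global one in Theorem~\ref{GlobalInversion}. So there is nothing in the paper to compare your argument against.

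That said, your outline is a faithful sketch of the standard proof strategy in the literature: normalize by similarities, use the distortion theory on compactly contained subballs to get quasisymmetry and a controlled bicollar of an intermediate image sphere, then invoke the quasiconformal collaring/Schoenflies machinery (Gehring, Tukia--V\"ais\"al\"a) to fill in the exterior, and glue across a round sphere using removability. You correctly identify the hard step --- the quasiconformal extension of a quasisymmetric boundary parametrization across a quasisphere --- and you are right that a naive radial interpolation fails. For the purposes of this paper, however, no proof is expected; a citation suffices.
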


 Accordingly, we may (and do) assume that $\,\mathbb X = \mathbb Y = \mathbb R^n\,$. This will give us a more precise information about the constant $\,\mathscr K = \mathscr K(n,K)\,$.

 \begin{theorem} [Global quasi-inversion] \label{GlobalInversion} Let a map $\,h : \mathbb R^n \onto \mathbb R^n\,$ be $\,K\,$-quasiconformal and $\,f : \mathbb R^n \onto \mathbb R^n\,$ denote its inverse. Then there is a constant $\, \mathscr K  = \mathscr K(n,K)\,\geqslant 1\,$ such that for every point $\,x_\circ \in \mathbb R^n\,$ and its image $\,y_\circ = h(x_\circ)\,$ it holds
\begin{equation} \label{GlobalQuasiInverseBounds}
 \mathscr K^{-1} s \; \leqslant\,  \omega_h (\omega_f(s))  \;\leqslant\, \mathscr K  s\;\; \;\;\textnormal{and}\;\;\;\;  \mathscr K^{-1} t \;\leqslant\,  \omega_f (\omega_h(t))  \;\leqslant\;\mathscr K  t
\end{equation}
for all $\, s \geqslant 0\,$ and  $\,t \geqslant 0\,$.
\end{theorem}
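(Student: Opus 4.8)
The plan is to derive the theorem from two ingredients: (i) the classical fact that a $K$-quasiconformal homeomorphism of $\R^n$ onto itself has \emph{uniformly bounded spherical distortion on all scales} — i.e. is quasisymmetric — and (ii) a handful of elementary topological observations about the optimal modulus of continuity and its "inner" companion. I would begin by normalizing, via a translation, so that $x_\circ=0$ and $y_\circ=h(0)=0$; this alters neither the distortion nor the moduli $\omega_h,\omega_f$. Alongside $\omega_g(r)=\max_{|x|=r}|g(x)|$ I would introduce the \emph{inner (minimal) modulus} $\ell_g(r)=\min_{|x|=r}|g(x)|$, for $g\in\{h,f\}$. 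Since each $g$ is a homeomorphism of $\R^n$ fixing $0$, both $\omega_g$ and $\ell_g$ are strictly increasing homeomorphisms of $[0,\infty)$ onto $[0,\infty)$ (the maximum, resp.\ minimum, of $y\mapsto|y|$ over the topological ball $g(\overline{\mathbb B(0,r)})$ is attained on its boundary $g(\{|x|=r\})$, which simultaneously yields the monotonicity), and trivially $\ell_g(r)\le\omega_g(r)$. These facts are essentially recorded already in Section~\ref{sec:qcmaps}.

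The only nonelementary input is the estimate
\[
\omega_g(r)\;\le\;L\,\ell_g(r)\qquad\text{for all }r>0,
\]
with $L=L(n,K)\ge1$, holding simultaneously for $g=h$ and $g=f$ — the latter being legitimate because $f=h^{-1}$ is again $K$-quasiconformal. Equivalently $\mathcal H_g(0,r)\le L$ for every $r$, which is precisely the assertion that global $K$-quasiconformal maps of $\R^n$ are $\eta$-quasisymmetric with $\eta$ depending only on $n$ and $K$; I would quote this from V\"ais\"al\"a's book \cite{Vab} (cf.\ also \cite[Ch.~3]{AIMb} and \cite{GerExtension}), remarking that explicit values of $L$ can, if desired, be extracted from the Gr\"otzsch/Teichm\"uller ring capacity estimates. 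This is exactly the point at which having reduced to $h\colon\R^n\onto\R^n$ (through the Gehring extension already invoked) is essential: for a map of a proper subdomain the ratio $\mathcal H_g(x_\circ,r)$ need not remain bounded as $r$ approaches the boundary, so no global statement would survive.

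Next I would record the two "cross" inequalities tying $h$ to $f$:
\[
\ell_h\big(\omega_f(s)\big)\;\le\;s\qquad\text{and}\qquad\omega_h\big(\ell_f(s)\big)\;\ge\;s,\qquad s\ge0.
\]
For the first, pick $y_*$ on $\{|y|=s\}$ realizing $|f(y_*)|=\omega_f(s)$; then $x_*:=f(y_*)$ lies on the sphere $\{|x|=\omega_f(s)\}$ and $|h(x_*)|=|y_*|=s$, whence $\ell_h(\omega_f(s))=\min_{|x|=\omega_f(s)}|h(x)|\le s$. The second is identical, using a point $y_{**}$ on $\{|y|=s\}$ with $|f(y_{**})|=\ell_f(s)$. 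Combining these with the displayed bound of the previous paragraph gives
\[
\omega_h\big(\omega_f(s)\big)\;\le\;L\,\ell_h\big(\omega_f(s)\big)\;\le\;L\,s,\qquad\omega_h\big(\omega_f(s)\big)\;\ge\;\omega_h\big(\ell_f(s)\big)\;\ge\;s\;\ge\;L^{-1}s,
\]
where in the lower chain I used $\ell_f(s)\le\omega_f(s)$ together with monotonicity of $\omega_h$. Hence $\mathscr K^{-1}s\le\omega_h(\omega_f(s))\le\mathscr Ks$ with $\mathscr K:=L(n,K)$; interchanging the roles of $h$ and $f$ (legitimate since $f$ is also $K$-quasiconformal and $f^{-1}=h$) yields the companion bound $\mathscr K^{-1}t\le\omega_f(\omega_h(t))\le\mathscr Kt$ with the same constant, and both hold for all $s,t\ge0$ (the cases $s=0$, $t=0$ being trivial).

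The hard part will be ingredient (i): one must know that $K$-quasiconformality of a \emph{global} homeomorphism forces the spherical distortion ratio to be bounded on \emph{every} scale, not merely infinitesimally — this is where the full strength of quasisymmetry theory enters. Granting that, the theorem reduces to the short bookkeeping above; in fact the argument delivers slightly more than claimed, namely $\omega_h(\omega_f(s))\ge s$ and $\omega_f(\omega_h(t))\ge t$ with constant $1$ on the lower side, so that only the upper estimates genuinely consume the quasisymmetry bound. (The local version, Theorem~\ref{LocalQuasiInversion}, then follows by applying this global result to the Gehring extension and restricting to the ball where it agrees with $h$.)
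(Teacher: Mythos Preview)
Your proof is correct and follows essentially the same approach as the paper: both normalize to $x_\circ=y_\circ=0$, invoke the uniform bound $\mathcal H_h(0,r)\le\mathscr K$ on \emph{all} scales (what you call quasisymmetry, what the paper packages as Proposition~\ref{MaxMin} via the three-points condition), and then carry out a short bookkeeping argument comparing values of $h$ at two points on a common sphere. Your formulation via the inner modulus $\ell_g$ and the cross inequalities $\ell_h(\omega_f(s))\le s$, $\omega_h(\ell_f(s))\ge s$ is just a clean repackaging of the paper's choice of the specific points $x$ and $f(y)$ on the sphere $\{|x|=\omega_f(s)\}$; as a bonus, your route makes explicit that the lower bounds hold with constant $1$ rather than $\mathscr K^{-1}$, a fact the paper's argument also yields but does not record.
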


  Rather than using the original definition  we will appeal to Gehring's  characterization of quasiconformal
  mappings, see Inequality (3.3) in \cite{GehringMartio} and some related articles \cite{Ge, Gehring, Vub, Vab, Kelingos, Vaisala}. The interested reader is referred to a book by P. Caraman \cite{Caraman} on various definitions and extensive early literature on the subject.

 \begin{proposition}[Three points condition] \label{ThrePointsRatio}
 To every $\,\lambda \geqslant 1\,$ there corresponds a constant  $\,1 \leqslant \mathscr K_\lambda = \mathscr K_\lambda(n,K)\,$
 such that:\\
 Whenever three distinct points $\,x_\circ \,,\, x_1\,,\, x_2 \,\in\,\mathbb R^n\,$ satisfy the ratio condition
 \begin{equation}
  \frac{|x_1 - x_\circ|}{|x_2 - x_\circ|}\; \leqslant \lambda ,
 \end{equation}
the image points under $\,h : \mathbb R^n \onto \mathbb R^n\,$ satisfy analogous condition

\begin{equation} \label{Klambda}
  \frac{|h(x_1) - h(x_\circ)|}{|h(x_2) - h(x_\circ)|}\; \leqslant \;\mathscr K_\lambda = \mathscr K_\lambda(n,K)
 \end{equation}
\end{proposition}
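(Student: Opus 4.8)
The plan is to pass to a normalised configuration by conformal changes of variable and then to run a single modulus estimate, pitting Gehring's $K$-quasi-invariance of the modulus of ring domains (the characterisation alluded to just above the statement) against the extremal property of the Teichm\"uller ring.

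First I would normalise. Pre- and post-composing $\,h\,$ with translations and homotheties of $\,\R^n\,$ alters none of the three ratios in the statement and keeps $\,h\,$ $\,K\,$-quasiconformal, so I may assume $\,x_\circ = 0\,$, $\,h(0) = 0\,$, $\,|x_2| = 1\,$ and $\,|h(x_2)| = 1\,$. In this normalisation the claim reads: there is $\,\mathscr K_\lambda = \mathscr K_\lambda(n,K) \ge 1\,$ with $\,|h(x_1)| \le \mathscr K_\lambda\,$ whenever $\,|x_1| \le \lambda\,$. If $\,|h(x_1)| \le 1\,$ there is nothing to prove, so I put $\,\rho := |h(x_1)| > 1\,$.

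Next I would introduce the round annulus $\,A^\star := \{\, y : 1 < |y| < \rho \,\}\,$ in the target together with the ring $\,A := h^{-1}(A^\star) = \{\, x : 1 < |h(x)| < \rho \,\}\,$ in the source; this $\,A\,$ is a genuine bounded ring because $\,h \colon \R^n \onto \R^n\,$ is a proper homeomorphism. Its bounded complementary component $\,C_0 = \{\, x : |h(x)| \le 1 \,\}\,$ contains $\,0\,$ and $\,x_2\,$, hence reaches distance $\,|x_2| = 1\,$ from the origin; its unbounded component $\,C_1 = \{\, x : |h(x)| \ge \rho \,\} \cup \{\infty\}\,$ contains $\,\infty\,$ and $\,x_1\,$, so $\,\dist(0, C_1) \le |x_1| \le \lambda\,$. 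Now two estimates collide. By the extremal property of the Teichm\"uller ring, any ring in $\,\overline{\R^n}\,$ whose bounded complementary component contains $\,0\,$ and whose unbounded one contains $\,\infty\,$ has $\,\mathrm{mod}(A) \le \tau_n\big(\dist(0, C_1)/\sup_{w \in C_0}|w|\big) \le \tau_n(\lambda)\,$, where $\,\tau_n\,$ is the (increasing) modulus function of the Teichm\"uller ring. On the other hand $\,h\,$ maps $\,A\,$ onto $\,A^\star\,$, so the $\,K\,$-quasi-invariance of the modulus of ring domains --- equivalently Gehring's inequality (3.3) in \cite{GehringMartio}, which is at our disposal because the metric definition of quasiconformality entails the geometric one --- gives $\,\mathrm{mod}(A^\star) \le c(n,K)\, \mathrm{mod}(A)\,$. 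Since $\,\mathrm{mod}(A^\star) = \log \rho\,$, this forces $\,\log \rho \le c(n,K)\, \tau_n(\lambda)\,$, that is $\,|h(x_1)| = \rho \le \exp\!\big( c(n,K)\, \tau_n(\lambda) \big) =: \mathscr K_\lambda(n,K)\,$, which is the desired three-points inequality.

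The point that will most need care is the bookkeeping of the directions: the Teichm\"uller estimate must be used as an upper bound for $\,\mathrm{mod}(A)\,$, so the monotonicity of $\,\tau_n\,$ is what lets me replace the ratio $\,\dist(0, C_1)/\sup_{w \in C_0}|w| \le \lambda\,$ by $\,\tau_n(\lambda)\,$, while the quasiconformal distortion of the modulus has to be applied in the complementary sense, converting $\,\mathrm{mod}(A)\,$ into a \emph{lower} bound $\,c(n,K)^{-1} \log \rho\,$. A secondary matter is the legitimacy of invoking the geometric (modulus) machinery at all when quasiconformality is given only metrically; this is precisely the equivalence of definitions recalled just before the statement, and once it is granted the argument above is essentially immediate. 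If one prefers to bypass the Teichm\"uller function entirely, the same conclusion also follows by combining the $\,\lambda = 1\,$ case --- a uniform bound on the spherical distortion $\,\max_{|x| = r}|h(x)| / \min_{|x| = r}|h(x)|\,$ --- with the elementary modulus estimate for the annulus $\,\{\, 1 < |x| < \lambda \,\}\,$, but the single Teichm\"uller ring is shorter and cleaner.
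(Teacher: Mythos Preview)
Your argument is correct and follows the classical route through the Teichm\"uller ring extremal problem combined with the $K$-quasi-invariance of the conformal modulus. The normalisation is clean, the ring $A = h^{-1}(A^\star)$ is a legitimate ring domain because $h$ extends to a self-homeomorphism of $\overline{\mathbb R^n}$, and the two estimates --- $\mathrm{mod}(A) \le \tau_n(|x_1|/|x_2|) \le \tau_n(\lambda)$ from the Teichm\"uller bound, and $\log\rho = \mathrm{mod}(A^\star) \le c(n,K)\,\mathrm{mod}(A)$ from the geometric definition --- combine exactly as you say. Your final caveat about the direction of the inequalities is well placed and correctly resolved.

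As for comparison with the paper: there is none to make. The paper does not prove Proposition~\ref{ThrePointsRatio}; it simply quotes it as Gehring's characterisation, citing Inequality~(3.3) in \cite{GehringMartio} together with \cite{Ge, Gehring, Vub, Vab, Kelingos, Vaisala, Caraman}, and then uses it as a black box. So you have supplied what the paper deliberately omitted. Your proof is essentially the standard one found in those references (for instance V\"ais\"al\"a's Theorem~18.1 proceeds along the same lines), and it is entirely adequate here.

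One small cosmetic point: when you write the Teichm\"uller bound as $\tau_n\big(\mathrm{dist}(0,C_1)/\sup_{w\in C_0}|w|\big)$ you are implicitly optimising over all admissible pairs $(a,b)\in C_0\times C_1$; the bound with the specific pair $(x_2,x_1)$ already suffices and avoids the (harmless) question of whether the extrema are attained.
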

In particular,

\begin{proposition}\label{MaxMin}
Let $\,h :  \mathbb R^n  \onto \mathbb R^n\,$ be $\,K\,$-quasiconformal. Then for every point $\,x_\circ \in \mathbb X\,$  and $\,0< r < \infty\,$ we have
 \begin{equation}\label{TheHratio}
 \mathcal H_h(x_\circ ,  r ) \;\bydef\; \frac{ \max_{|x - x_\circ| = r} \;|h(x) - h(x_\circ)|}{\min_{|x - x_\circ| = r}\; |h(x) - h(x_\circ)|}  \,\leqslant \mathscr K  = \mathscr K_1(n,K)
 \end{equation}
\end{proposition}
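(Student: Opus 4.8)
The plan is to read this off immediately from the three points condition, Proposition~\ref{ThrePointsRatio}, specialized to $\lambda = 1$. Fix $x_\circ \in \mathbb R^n$ and a radius $r \in (0,\infty)$, and consider the sphere $S_r = \{x \in \mathbb R^n : |x - x_\circ| = r\}$. The first step is to produce the extremal points: since $S_r$ is compact and $x \mapsto |h(x) - h(x_\circ)|$ is continuous, there exist $x_1, x_2 \in S_r$ realizing the maximum and the minimum, respectively, in the definition of $\mathcal H_h(x_\circ, r)$. Because $h$ is a homeomorphism and $x_2 \neq x_\circ$ (as $r > 0$), we have $h(x_2) \neq h(x_\circ)$, so the denominator is strictly positive and $\mathcal H_h(x_\circ, r)$ is an a priori finite number; the content of the proposition is that it is bounded by a constant not depending on $x_\circ$ or $r$.

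The second step is the comparison itself. Since $|x_1 - x_\circ| = |x_2 - x_\circ| = r$, the triple $x_\circ, x_1, x_2$ satisfies the ratio hypothesis $|x_1 - x_\circ|/|x_2 - x_\circ| = 1 \le 1 = \lambda$. If $x_1 = x_2$ there is nothing to prove, since then $\mathcal H_h(x_\circ, r) = 1$. Otherwise the three points are distinct, and Proposition~\ref{ThrePointsRatio} with $\lambda = 1$ gives
\[
 \mathcal H_h(x_\circ, r) = \frac{|h(x_1) - h(x_\circ)|}{|h(x_2) - h(x_\circ)|} \;\le\; \mathscr K_1(n,K) = \mathscr K ,
\]
which is exactly the claimed bound, uniform in $x_\circ$ and $r$.

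I do not expect a genuine obstacle here: the statement is a direct specialization of Proposition~\ref{ThrePointsRatio}, and the whole argument is the paragraph above. The only points worth a sentence in the write-up are (i) the attainment of the sup and inf, which uses compactness of $S_r$ — legitimate because $\mathbb X = \mathbb R^n$, so $S_r \subset \mathbb X$ for every $r > 0$ — and (ii) the positivity of the denominator, which is injectivity of $h$. It is also worth remarking that the same reasoning localizes verbatim: on a general domain $\mathbb X$ one obtains $\mathcal H_h(x_\circ, r) \le \mathscr K$ for all $0 < r < \dist(x_\circ, \partial \mathbb X)$, either from the local form of the three points condition or, as in the discussion preceding Theorem~\ref{GlobalInversion}, via Gehring's extension lemma.
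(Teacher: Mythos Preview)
Your argument is correct and is exactly what the paper intends: Proposition~\ref{MaxMin} is introduced with ``In particular,'' indicating it is an immediate specialization of Proposition~\ref{ThrePointsRatio} with $\lambda=1$, and the paper does not give a separate proof. Your write-up simply spells out the obvious details (attainment by compactness, positivity of the denominator by injectivity) that the paper leaves implicit.
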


\begin{proof} (of Theorem \ref{GlobalInversion})\, It is clearly sufficient to make the computation
 when $\,x_\circ = 0\,$ and $\,y_\circ = 0\,$.  In this case the condition \,(\ref{TheHratio})\, takes the form

 \begin{equation} \label{DistortionOnSpheres}
 \;\frac{1}{\mathscr K} \,|h(x_2)| \leqslant  \,  |h(x_1)|\;  \leqslant \;\mathscr K \, |h(x_2)|\;,\;\;
 \textnormal{whenever}\, \,|x_1| = |x_2| \not = 0
 \end{equation}

 By the definition of the optimal modulus of continuity at the origin, we have:
\begin{itemize}
\item   $\omega_h (\omega_f(s)) \,=  |h(x)|\;$ for some $\, x \in \mathbb R^n\,$  with $\, |x|  = \omega _f(s)$
\item $ \omega _f(s) =  |f(y)|\,$  for some $\, y \in \mathbb R^n\,$ with $\, |y| = s\,$
\item  Therefore,   $\omega_h (\omega_f(s) )\,=  |h(x)|\;$, for some $\, |x|  = |f(y)|$
\end{itemize}
Now, the right hand side of inequality at \,(\ref{DistortionOnSpheres}) gives the desired
upper bound $\,\omega_h (\omega_f(s))\,= |h(x)| \leqslant \mathscr K \, |h(f(y))| = \mathscr K\, |y| = \mathscr K s\,$, whereas the left hand side gives the
lower bound $\,\omega_h (\omega_f(s))\,= |h(x)| \geqslant \mathscr K^{-1} \, |h(f(y))| = \mathscr K^{-1}\, |y| = \mathscr K^{-1}\,s\,$.
The analogous bounds for $\,\omega_f(\omega_h(t))\,$ at (\ref{QuasiInverseBounds}) \, follow by interchanging the roles of $\,h\,$ and $\, f\,$; as they are both $\,K\,$-quasiconformal. This completes the proof of Theorem \ref{GlobalInversion}.
\end{proof}
The converse statement to Theorem \ref{LocalQuasiInversion}  reads as:
\begin{theorem} \label{ConverseStatement} Consider a homeomorphism $\,h : \mathbb X \onto \mathbb Y\,$,  its inverse mapping $\,f : \mathbb Y \onto \mathbb X\,$,  and their optimal moduli of continuity at a point $\,x_\circ \in \mathbb X\,$ and $\,y_\circ = h(x_\circ)\,$, respectively:
$$
\omega_h (t)  \bydef \max_{|x - x_\circ| = t} |h(x) - h(x_\circ) |\;\;\;\textnormal{and} \;\;\;\;\; \omega_f (s) \bydef \max_{|y - y_\circ| = s} |f(y) - f(y_\circ) |
$$
for $\, 0 \leqslant t < \textnormal{dist}( x_\circ , \partial \mathbb X)\,$  and  $\, 0 \leqslant s < \textnormal{dist}( y_\circ , \partial \mathbb Y)\,.$ Assume the following \textit{one-sided quasi-inverse condition}  at every point $\,x_\circ \in \mathbb X\,$, with a constant $\, \mathscr K \geqslant 1\,.$
\begin{equation} \label{OneSidedCondition}
\omega_h (\omega_f(r)) \,\leqslant \mathscr K r \;\; \textnormal{for all sufficiently small}\; r > 0\,\;\;\;  (\textnormal{depending on} \,x_\circ )
\end{equation}
Then $\,h\,$ is $\,\mathscr K\,$-quasiconformal.
\end{theorem}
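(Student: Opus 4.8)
The plan is to verify the \emph{metric} definition of quasiconformality directly, at every point, rather than to pass through analytic estimates. Concretely, I will show that $\mathcal H_h(x_\circ)\le \mathscr K$ for each $x_\circ\in\X$; by Definition~\ref{DefKquasiConf} this gives $\mathcal K_h=\sup_{x_\circ}\mathcal H_h(x_\circ)\le\mathscr K<\infty$, hence (a fortiori $\operatorname{ess\,sup}_{x_\circ}\mathcal H_h(x_\circ)\le\mathscr K$) that $h$ is $\mathscr K$-quasiconformal. So fix $x_\circ$ and $y_\circ=h(x_\circ)$; after translating we may assume $x_\circ=0=y_\circ$. For $0<t<\dist(0,\partial\X)$ set $\omega_h(t)=\max_{|x|=t}|h(x)|$ and introduce the companion quantity $\mu_h(t)\bydef\min_{|x|=t}|h(x)|$, which is strictly positive since $h$ is injective and the sphere $|x|=t$ is compact. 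By definition $\mathcal H_h(0,t)=\omega_h(t)/\mu_h(t)$, so everything reduces to proving the bound $\omega_h(t)\le\mathscr K\,\mu_h(t)$ for all sufficiently small $t$.

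The single geometric fact I need is the inequality $\omega_f\bigl(\mu_h(t)\bigr)\ge t$. To obtain it, pick (by compactness) a point $x_1$ with $|x_1|=t$ and $|h(x_1)|=\mu_h(t)$, and put $y_1=h(x_1)$. Then $|y_1|=\mu_h(t)$ while $f(y_1)=x_1$, so $|f(y_1)|=t$; since $\omega_f(\mu_h(t))=\max_{|y|=\mu_h(t)}|f(y)|\ge|f(y_1)|$, the claim follows immediately. (In fact equality $\omega_f(\mu_h(t))=t$ holds, via the observation that the ball $\mathbb B(0,\mu_h(t))$ lies inside the open set $h(\mathbb B(0,t))$ by a connectedness argument, so that $f\bigl(\mathbb B(0,\mu_h(t))\bigr)\subset\mathbb B(0,t)$; but only the lower bound is used.)

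Now apply the hypothesis \eqref{OneSidedCondition} with $r=\mu_h(t)$. Because $\mu_h(t)\le\omega_h(t)\to 0$ as $t\to 0$, the value $r=\mu_h(t)$ is as small as required once $t$ is small enough, so $\omega_h\bigl(\omega_f(\mu_h(t))\bigr)\le\mathscr K\,\mu_h(t)$. Since $\omega_h$ is nondecreasing and $\omega_f(\mu_h(t))\ge t$ by the previous step, the left-hand side is at least $\omega_h(t)$; hence $\omega_h(t)\le\mathscr K\,\mu_h(t)$, i.e. $\mathcal H_h(0,t)\le\mathscr K$ for all small $t$. Taking $\limsup_{t\to0}$ gives $\mathcal H_h(x_\circ)\le\mathscr K$, and as $x_\circ$ was arbitrary we conclude that $h$ is $\mathscr K$-quasiconformal.

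I do not expect a real obstacle here; the argument is essentially a one-line manipulation once the right companion quantity $\mu_h$ is introduced and the elementary identity $\omega_f\circ\mu_h=\mathrm{id}$ (or just its lower half) is noticed. The only points needing a little care are: that $\omega_f$, and therefore the composition $\omega_h\circ\omega_f$, is defined near $0$ — which is fine since $\omega_f(r)$ is small for small $r$ and $h$ is a homeomorphism; that $\omega_h$ is monotone; and that the metric dilatation $\mathcal H_h$ used here is exactly the quantity appearing in Definition~\ref{DefKquasiConf}. If one wishes to record the sharper identities $\omega_f(\mu_h(t))=t$ and $\omega_h(\mu_f(s))=s$ (which also recover the full strength of Theorem~\ref{LocalQuasiInversion} as a converse), the extra ingredient is the topological statement that $\mathbb B\bigl(y_\circ,\dist(y_\circ,h(\partial\mathbb B(x_\circ,t)))\bigr)\subset h(\mathbb B(x_\circ,t))$, which is immediate from connectedness and invariance of domain.
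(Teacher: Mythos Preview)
Your proof is correct and follows essentially the same route as the paper: both arguments set $r=\mu_h(t)=\min_{|x|=t}|h(x)|$, observe that $\omega_f(r)\ge t$ via the point where the minimum is attained, and then feed this into the hypothesis $\omega_h(\omega_f(r))\le\mathscr K r$ to obtain $\omega_h(t)\le\mathscr K\,\mu_h(t)$. The paper phrases the key step geometrically in terms of the inscribed ball $\mathbf B_r\subset h(\mathbb B(0,t))$ and asserts the equality $\omega_f(r)=t$, whereas you work directly with the minimizing point and use only the inequality $\omega_f(r)\ge t$ together with the monotonicity of $\omega_h$; this is a cosmetic difference, and your version is if anything slightly more economical.
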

Here is a simple geometric proof.
\begin{proof}
We shall actually show that Condition  (\ref{OneSidedCondition})  at the given point $\,x_\circ \in \mathbb X\,$ implies

\begin{equation}\label{MaxMinRatio}
 \mathcal H_h(x_\circ ,  t ) =  \frac{ \max_{|x - x_\circ| = t} \;|h(x) - h(x_\circ)|}{\min_{|x - x_\circ| = t}\; |h(x) - h(x_\circ)|}  \,\leqslant \mathscr K\, ,   \end{equation}
 for $t>0$ sufficiently small.
 In particular, for every $\,x_\circ \in \mathbb X\,$ it holds that:
  \begin{equation}\label{MaxDistortion}
 \limsup_{t\rightarrow 0} \;\mathcal H_h(x_\circ ,  t )  \,\leqslant \mathscr K\;,\;\;\textnormal{as required. }
 \end{equation}
A sufficient  upper bound of $\,t\,$ at (\ref{MaxMinRatio})  depends on $\,\textnormal{dist}(x_\circ, \partial \mathbb X)\,$, but we shall not enter into this issue. It simplifies the writing, and causes no loss of generality, to assume that $\,x_\circ = y_\circ = 0\,$. Thus we are reduced to showing that
 \begin{equation}
 \max_{|x| = t} |h(x)|  \leqslant \mathscr K \min_{|x| = t} |h(x)|\;,\;\;\;\textnormal{for all sufficiently small} \,\; t > 0.
 \end{equation}
 To this end, consider  the ball $\,\mathbb B(x_\circ , t ) \subset \mathbb X\,$  centered at $\,x_\circ = 0\,$ and with small radius $\,t > 0\,$. Its image under $\,h\,$, denoted by $\,\Omega = h (\mathbb B(x_\circ, t ) ) \subset \mathbb Y\,$,  contains the origin $\,y_\circ = 0\,$.  Let $\,r > 0\,$ denote the largest radius of a ball, denoted by $\,\mathbf B_r \subset \Omega\,$,  centered at $\,y_\circ = 0\,$. Thus
 $$
 \min_{|x| = t} |h(x)|  =  r
 $$
Similarly,  denote by $\,R\,$ the smallest radius of a ball  $\,\mathbf B_R \supset \Omega\,$ centered at $\,y_\circ  = 0\,$, see Figure~\ref{TheRatioRandr}.  Thus
 $$
  R = \max_{|x| = t} |h(x)|  \bydef  \omega_h(t)
 $$

  \begin{figure}[!h]
  \begin{center}
\includegraphics*[height=2.0in]{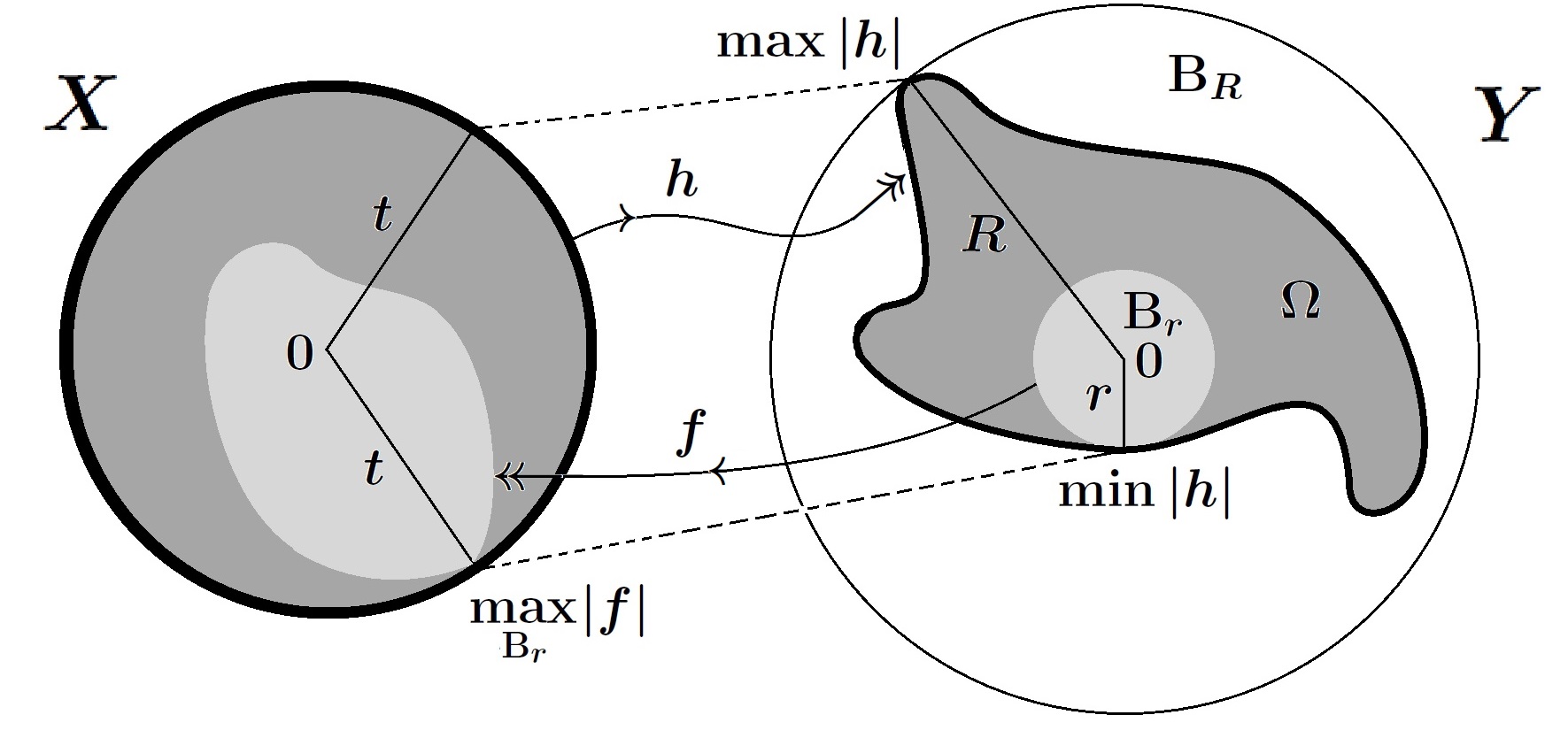}
\caption{The ratio $\,\frac{R}{r} \leqslant \mathscr K\,$}\label{TheRatioRandr}
\end{center}
\end{figure}
Now the inverse map  $\,f :\mathbb Y \onto \mathbb X\,$  takes $\,\Omega\,$ onto $\,\mathbb B(x_\circ , t )\,$.  In particular, it takes the common point of $\,\partial \mathbf B_r\,$ and $\,\partial \Omega\,$ into a point of $\,\partial \mathbb B(x_\circ , t )\,$. This means that
 $$
  t  = \max_{|y| = r} |f(y)|  \bydef \omega_f(r)
 $$
 The proof is completed by invoking the quasi-inverse condition at (\ref{OneSidedCondition}),
 $$
  R = \omega_h(t) = \omega_h(\omega_f(r))  \leqslant \mathscr K r
 $$
\end{proof}
\subsubsection{Doubling Property}
It is worth discussing another special property of quasiconformal mappings in relation to their bi-modulus of continuity. To simplify matters we confine ourselves to  quasiconformal mappings defined on the entire space,  $\, h : \mathbb R^n \onto \mathbb R^n\,$ and its inverse  $\, f : \mathbb R^n \onto \mathbb R^n\,$.  It turns out that at every point $\,x_\circ \in \mathbb R^n\,$ the optimal modulus of continuity $\,\phi(t) \bydef \omega_h(x_\circ; t) \,$, as well as its inverse function $\, \phi^{-1} : [0, \infty) \onto [0, \infty)\,$  have a doubling property. Observe that $\, \phi^{-1}\,$ is not exactly the optimal modulus of continuity of the inverse map $\, f = h^{-1}\,$, the latter is only quasi-inverse to $\, \phi^{-1}\,$. It should be emphasized at this point that doubling property of the modulus of continuity is rather rare, see our representative examples in Section  \ref{Conditions}.

\begin{proposition} Consider all $\,K\,$-quasiconformal mappings $\,h : \mathbb R^n \onto \mathbb R^n\,$. To every $\, \lambda \geqslant 1\,$ there corresponds a constant $\,\mathscr K_\lambda\,$ (actually the one specified in (\ref{Klambda})\,),  and  there is a constant $\,C_\lambda = C_\lambda(n, K)\,$ (independent of $\,h\,$) such that at every point $\,x_\circ \in \mathbb R^n\,$ we have
\begin{equation}\label{DoublingOfInverseModulus}
\omega_h(x_\circ; \lambda\, t ) \leqslant \mathscr K_\lambda \,\,\omega_h(x_\circ; t )\;
\end{equation}
and
\begin{equation}\label{DoublingOfInverseModulus2}
 \omega_h^{-1}(x_\circ; \lambda\, s ) \leqslant C_\lambda \,\,\omega_h^{-1}(x_\circ; s )
\end{equation}
for all $\, 0 \leqslant t  <\infty\,$ and $\,0 \leqslant s < \infty\,$.
\end{proposition}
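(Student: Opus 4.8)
The plan is to read both estimates off the Three Points Condition (Proposition~\ref{ThrePointsRatio}), invoking it for $h$ to obtain \eqref{DoublingOfInverseModulus} and for the inverse $f=h^{-1}$ (which is again $K$-quasiconformal) to obtain \eqref{DoublingOfInverseModulus2}. As a first reduction I would compose with translations in the source and in the target; these change neither the dilatation nor either side of the claimed inequalities, so we may assume $x_\circ=0$ and $h(0)=0$. Abbreviate $\phi(t):=\omega_h(0;t)$, a homeomorphism of $[0,\infty)$ onto itself.

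For \eqref{DoublingOfInverseModulus} the argument is immediate. Given $t>0$ and $\lambda\ge1$, pick $x_1$ with $|x_1|=\lambda t$ attaining $\phi(\lambda t)=|h(x_1)|$, and let $x_2$ with $|x_2|=t$ attain $\phi(t)=\max_{|x|=t}|h(x)|$. Since $|x_1|/|x_2|=\lambda$, Proposition~\ref{ThrePointsRatio} gives $|h(x_1)|\le\mathscr K_\lambda\,|h(x_2)|$, i.e. $\phi(\lambda t)\le\mathscr K_\lambda\,\phi(t)$. (One could even keep the sharper bound $\phi(\lambda t)\le\mathscr K_\lambda\min_{|x|=t}|h(x)|$, but this is not needed.)

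For \eqref{DoublingOfInverseModulus2} the key is a companion lower bound $\phi(\mu t)\ge\lambda\,\phi(t)$ for a suitable dilation factor $\mu=\mu(n,K,\lambda)$, and this is the step that forces the inverse deformation onto the stage. Given $\lambda\ge1$ I would put $\mu:=2\mathscr K_\lambda$, choose $p$ with $|p|=\mu t$ attaining $\phi(\mu t)=|h(p)|$ and $q$ with $|q|=t$ attaining $\phi(t)=|h(q)|$, and apply Proposition~\ref{ThrePointsRatio} to the $K$-quasiconformal map $f$, with center $y_\circ=0$ (so $f(y_\circ)=0$) and the two points $y_1=h(p)$, $y_2=h(q)$. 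Were $|h(p)|/|h(q)|\le\lambda$, the conclusion would read $|p|/|q|=\mu\le\mathscr K_\lambda$, contradicting $\mu=2\mathscr K_\lambda$; hence $\phi(\mu t)>\lambda\,\phi(t)$. Writing $s=\phi(t)$, i.e. $t=\phi^{-1}(s)$, and applying the increasing function $\phi^{-1}$ to $\phi\!\left(\mu\,\phi^{-1}(s)\right)\ge\lambda s$ yields $\phi^{-1}(\lambda s)\le\mu\,\phi^{-1}(s)$, which is \eqref{DoublingOfInverseModulus2} with $C_\lambda=2\mathscr K_\lambda=C_\lambda(n,K)$.

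The only place needing care is this second step, and the care amounts precisely to remembering to use the Three Points Condition for $f$ rather than for $h$; once that is done, the contrapositive comparison between the spheres $\{|x|=t\}$ and $\{|x|=\mu t\}$ carries the day, and the rest is bookkeeping. As a cross-check, \eqref{DoublingOfInverseModulus2} can alternatively be deduced from \eqref{DoublingOfInverseModulus} applied to $f$ together with Theorem~\ref{GlobalInversion}, which shows $\omega_f$ is comparable to $\phi^{-1}$ up to the multiplicative constants $\mathscr K^{\pm1}$; the direct route above is preferable only because it avoids carrying those extra factors.
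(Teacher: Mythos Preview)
Your proof of \eqref{DoublingOfInverseModulus} is the same as the paper's: both read it directly off the Three Points Condition for $h$.

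For \eqref{DoublingOfInverseModulus2} your argument is correct but genuinely different from the paper's. The paper proceeds in two stages: first it records that $\omega_f$ also satisfies the doubling estimate (by symmetry), and then it proves an abstract lemma to the effect that if $\phi$ satisfies a generalized doubling condition and $\psi$ is quasi-inverse to $\phi$ in the sense $m\,t\le\psi(\phi(t))\le M\,t$, then $\psi^{-1}$ inherits doubling with constant $C_\phi(M\lambda/m)$. Applying this with $\phi=\omega_f$ and $\psi=\omega_h$, and invoking Theorem~\ref{GlobalInversion} for the quasi-inverse relation, yields $C_\lambda=\mathscr K_{\mathscr K^2\lambda}$. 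Your route is more direct: you apply the Three Points Condition once to $f$, in contrapositive form, to obtain the reverse growth bound $\phi(\mu t)>\lambda\,\phi(t)$ with $\mu=2\mathscr K_\lambda$, and then invert. This bypasses the quasi-inverse machinery entirely and gives the cleaner explicit constant $C_\lambda=2\mathscr K_\lambda$. What the paper's approach buys is a reusable lemma (doubling passes to $\psi^{-1}$ whenever $\psi$ is quasi-inverse to a doubling function), which it then quotes in the summary theorem; what your approach buys is brevity and a sharper constant for this particular statement. You even note the paper's route as your cross-check in the last paragraph.
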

\begin{proof}  We may again assume that $\,x_\circ = 0\,$ and $\,h(x_\circ) = 0\,$. This simplifies the notation  $\, \omega_h(x_\circ; t )\bydef \omega_h( t )\,$. The proof of the first inequality is immediate from the three points ratio condition in Proposition  \ref{ThrePointsRatio}\,, which gives us exactly the constant $\, \mathscr K_\lambda\,$ from this condition. Indeed, we have
\begin{itemize}
\item $\,\omega_h(\lambda\,t)  = |h(x_1)| \,$, \,for some $\,x_1 \in \mathbb R^n\,$ with $\,|x_1| = \lambda \, t\,$
\item $\, \omega_h(t) \;=\; |h(x_2)|\,$, \,  for some $\,x_2 \in \mathbb R^n\,$ with $\,|x_2| = \, t\,$
\item Hence,  $\,\frac{|x_1|}{|x_2|}  \;\leqslant \,  \lambda\,. $
 \item  Consequently  $\,\frac{|h(x_1)|}{|h(x_2)|}  \;\leqslant \,  \mathscr K_\lambda\,$, which is the desired estimate.
\end{itemize}
\end{proof}
 Clearly, for every $\, y_\circ \in \mathbb R^n\,$ we also have
 \begin{equation}\label{DoublingForf}
 \omega_f(y_\circ; \lambda\, s ) \leqslant \mathscr K_\lambda \,\,\omega_f(y_\circ; s )\;\;\;\; \textnormal{for all }\;\; 0 \leqslant s < \infty\,,
 \end{equation}
 simply by interchanging the roles of $\,h\,$ and $\,f\,$.\\

 We precede the proof of the doubling condition for $\,\omega_h^{-1}\,$, with a quick lemma.
\subsubsection {A quick lemma on doubling condition}  Consider an arbitrary continuously increasing function $\, \phi : [0, \infty) \onto  [0,\infty)\,$
(in our application, $\, \phi(t) = \omega_h(t)\,$).  It is commonly said that $\,\phi\,$ satisfies doubling condition if there is a
constant $\, C_\phi \geqslant 1\,$ such that $\,\phi(2t) \leqslant  C_\phi\, \phi(t)\,$ for all $\, t \geqslant 0\,$. However,
 it is convenient to work with so-called \textit{generalized doubling condition}, which reads as:

 \begin{equation}\label{generalDoubling}
 \phi(\lambda\, t)  \leqslant C_\phi(\lambda)\, \phi(t) \;,\;\;\textnormal{for all}\; t \geqslant 0
 \end{equation}
 where the \textit{$\,\lambda\,$- constant}\; $\, C_\phi(\lambda) \geqslant 1\,$  is obtained by
 iterating the inequality $\,\phi(2\,t) \leqslant  C_\phi\, \phi(t)\,$.\\
Associated with $\,\phi\,$ is its \textit{quasi-inverse function}. This term pertains to any
continuous and strictly increasing
function $\, \psi : [0, \infty) \onto  [0,\infty)\,$ such that

 \begin{equation}\label{QuasiInverse}
  m \,t   \leqslant \psi ( \phi(t) )\; \leqslant M t\;,\;\; \textnormal{for all}\;  t \geqslant 0
 \end{equation}
 where $\, 0 < m \leqslant 1\leqslant M < \infty\,$ are constants.
 In general, $\, \psi\,$ does not satisfy doubling condition, but its inverse  $\,\psi^{-1} : [0, \infty) \onto  [0,\infty)\,$ does.
 \begin{lemma} To every factor $\,\lambda \geqslant 1\,$  there corresponds a generalized doubling constant for $\,\psi^{-1}\,$. For all $\,t \geqslant 0\,$ we have
 \begin{equation}\label{DoublingForInverses}
 \psi^{-1}(\lambda\, t)  \leqslant C_{\psi^{-1}}(\lambda) \,\,  \psi^{-1}(t) \;. \;\;\;   \textnormal{Explicitly} \,\; C_{\psi^{-1}}  (\lambda) \bydef C_\phi\left(M \lambda /m  \right)\,.
 \end{equation}
 \end{lemma}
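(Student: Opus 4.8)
The plan is to first turn the quasi-inversion hypothesis on $\psi$ into a genuine two-sided comparison between $\psi^{-1}$ and dilated copies of $\phi$, and then feed that comparison into the generalized doubling condition \eqref{generalDoubling} for $\phi$.

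First I would apply the increasing function $\psi^{-1}$ to the chain $m t \le \psi(\phi(t)) \le M t$. Since $\psi^{-1}\big(\psi(\phi(t))\big) = \phi(t)$, this gives $\psi^{-1}(m t) \le \phi(t) \le \psi^{-1}(M t)$ for all $t \ge 0$. Substituting $s = m t$ in the left inequality and $s = M t$ in the right one, I obtain the sandwich
\[ \phi\!\left( \tfrac{s}{M} \right) \;\le\; \psi^{-1}(s) \;\le\; \phi\!\left( \tfrac{s}{m} \right) \qquad \text{for all } s \ge 0 . \]
This is the crux of the argument; the rest is a short chain of inequalities.

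Next, for $\lambda \ge 1$ and $t \ge 0$, the upper half of the sandwich gives $\psi^{-1}(\lambda t) \le \phi(\lambda t / m) = \phi\!\big( \tfrac{M\lambda}{m} \cdot \tfrac{t}{M}\big)$. Because $0 < m \le 1 \le M$ and $\lambda \ge 1$, the factor $M\lambda/m$ is $\ge 1$, so the generalized doubling condition \eqref{generalDoubling} applies with dilation $M\lambda/m$ and bounds this by $C_\phi(M\lambda/m)\,\phi(t/M)$. Finally the lower half of the sandwich, with $s = t$, gives $\phi(t/M) \le \psi^{-1}(t)$. Composing the three estimates yields $\psi^{-1}(\lambda t) \le C_\phi(M\lambda/m)\,\psi^{-1}(t)$, which is exactly \eqref{DoublingForInverses} with $C_{\psi^{-1}}(\lambda) = C_\phi(M\lambda/m)$.

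The one point deserving care — and what I would flag as the only (mild) obstacle — is the bookkeeping of the dilation factors: one must check that $M\lambda/m \ge 1$, so that the iterated constant $C_\phi(M\lambda/m)$ from \eqref{generalDoubling} is meaningful, and that the substitutions $s \leftrightarrow t$ keep all arguments in $[0,\infty)$, which they do since $\phi, \psi, \psi^{-1}$ are self-maps of $[0,\infty)$ and $m, M > 0$. There is no analytic difficulty here: the lemma is a purely formal consequence of the sandwich above together with the doubling of $\phi$.
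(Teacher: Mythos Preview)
Your proof is correct and follows essentially the same route as the paper: both apply $\psi^{-1}$ to the quasi-inverse inequality to obtain $\psi^{-1}(mt)\le\phi(t)\le\psi^{-1}(Mt)$, then substitute $t\rightsquigarrow \lambda t/m$ on the left, invoke the generalized doubling of $\phi$ with factor $M\lambda/m$, and finish via the right-hand inequality with $t\rightsquigarrow t/M$. Your explicit ``sandwich'' reformulation $\phi(s/M)\le\psi^{-1}(s)\le\phi(s/m)$ and the check that $M\lambda/m\ge 1$ are cosmetic additions, not a different argument.
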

 \begin{proof} Choose and fix $\, \lambda \geqslant 1\,$.  Inequality (\ref{QuasiInverse}) is equivalent to:

 \begin{equation} \label{AuxiliaryPsiInequality}
  \psi^{-1} (m \,t )   \leqslant  \phi(t) \; \leqslant  \psi^{-1}(M t) \,,\;\; \textnormal{for all}\;  t \geqslant 0\,.
 \end{equation}
 Upon substitution $\, t  \rightsquigarrow   \frac{\lambda t}{m}\,$  in the left hand side, we obtain

  \begin{center}$\psi^{-1} (\lambda \,t )   \leqslant  \phi(\frac{\lambda \,t}{m}) \; =\;  \phi( \frac{M\lambda}{ m} \,\cdot \, \frac{t}{M}) \leqslant   \, C_\phi(\frac{M\lambda}{ m} )\,\cdot  \,\phi(\frac{t}{M})$ \end{center}
  The proof of the lemma is completed by invoking  the right hand side of inequality (\ref{AuxiliaryPsiInequality}) which,
 upon substitution $\, t  \rightsquigarrow   \frac{t}{ M} \,$,    gives us the desired estimate $\, \phi(\frac{t}{M}) \leqslant \psi^{-1}(t)\,$.
  \end{proof}

We summarize this section with the following theorem, which is an expanded version of Theorem~\ref{thm:qc}:

\begin{theorem} Let $\,h : \mathbb R^n \onto \mathbb R^n\,$ be a $\,K\,$-quasiconformal mapping and $\,f : \mathbb R^n \onto \mathbb R^n\,$ its inverse.
Choose and fix an arbitrary point $\,x_\circ \in \mathbb R^n\,$ an its image point $\,y_\circ  = h(x_\circ)\,$. Denote by
$\,\phi(t) = \omega_h(x_\circ; t)\,$ the optimal modulus of continuity of $\,h\,$ at $\,x_\circ\,$ and
by
$\,\psi(s) = \omega_f(y_\circ; s)\,$ the optimal modulus of continuity of $\,f\,$ at $\,y_\circ\,$.
 Then the following statements hold true.
\begin{itemize}
\item[(Q1)] The functions $\,\phi\,$ and $\,\psi\,$   are quasi-inverse to each other.
Precisely, there is a constant $\,\mathscr K = \mathscr K(n, K)\,$ such that
\begin{equation}\label{Q1}
\mathscr K^{-1} \,t\,\leqslant \psi (\phi(t)) \; \leqslant   \mathscr K \,t \;\;\textnormal{ and }\;\; \mathscr K^{-1} \,s\,\leqslant \phi (\psi(s))
\; \leqslant   \mathscr K \,s
\end{equation}
for all $\, t, s \in [0, \infty)\,$.
\item[(Q2)] Both $\,\phi\,$ and $\,\psi\,$ satisfy the general doubling condition; that is, for every $\,\lambda \geqslant 1\,$ there is a
constant $\,\mathscr K_\lambda\,$ such that
\begin{equation}\label{Q2}
\phi(\lambda \,t) \; \leqslant   \mathscr K_\lambda \,\, \phi(t)  \;\;\textnormal{ and }\;\; \psi(\lambda \,s) \,\leqslant \mathscr K_\lambda \,\,\psi (s)
\end{equation}
for all $\, t, s \in [0, \infty)\,$.
\item [(Q3)] As a consequence of Conditions $\,(Q1)\,$ and $\,(Q2)\,$, the inverse functions $\,\psi^{-1}\,$ and $\,\phi^{-1}\,$ also satisfy a
general doubling conditions; namely,
\begin{equation}\label{Q3}
\phi^{-1}(\lambda \,s) \; \leqslant   C_\lambda \,\, \phi^{-1}(s)  \;\;\textnormal{ and }\;\; \psi^{-1}(\lambda \,t) \,\leqslant C_\lambda \,\,\psi^{-1} (s)
\end{equation}
for all $\, t, s \in [0, \infty)\,$, where the constant $\,C_\lambda = \mathscr K_\lambda ( \lambda\, \mathscr K^2)\,$.
\end{itemize}
\end{theorem}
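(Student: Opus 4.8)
The plan is to obtain each of the three claims by quoting results already established in this section and then reconciling the constants; no new analytic idea is required.

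\textbf{Step 1: (Q1).} Since $h\colon\R^n\onto\R^n$ is $K$-quasiconformal, so is $f=h^{-1}$, and by construction $\phi(t)=\omega_h(x_\circ;t)$, $\psi(s)=\omega_f(y_\circ;s)$ are precisely the functions appearing in Theorem~\ref{GlobalInversion}. Thus \eqref{GlobalQuasiInverseBounds} reads, word for word, $\mathscr K^{-1}t\le\psi(\phi(t))\le\mathscr K t$ and $\mathscr K^{-1}s\le\phi(\psi(s))\le\mathscr K s$ for all $t,s\ge 0$, with $\mathscr K=\mathscr K(n,K)$. That is exactly (Q1). The global validity (all $t,s\ge 0$, not merely small ones) is what makes working on the whole space $\R^n$ convenient, and it is needed for Step~3.

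\textbf{Step 2: (Q2).} The inequality $\phi(\lambda t)\le\mathscr K_\lambda\,\phi(t)$ is literally \eqref{DoublingOfInverseModulus}, established in the Doubling Property Proposition via the three points ratio condition (Proposition~\ref{ThrePointsRatio}), which also pins down the constant $\mathscr K_\lambda=\mathscr K_\lambda(n,K)$. The companion inequality $\psi(\lambda s)\le\mathscr K_\lambda\,\psi(s)$ is \eqref{DoublingForf}, obtained by running the identical argument with $f$ in place of $h$; this is legitimate because $f$ is $K$-quasiconformal with the same $K$. This gives (Q2).

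\textbf{Step 3: (Q3).} First observe that $\phi,\psi\colon[0,\infty)\onto[0,\infty)$ are genuine increasing homeomorphisms (because $h,f$ are homeomorphisms of $\R^n$), so $\phi^{-1}$ and $\psi^{-1}$ are well defined. Now apply the quick lemma on doubling (the one yielding \eqref{DoublingForInverses}): by (Q1), $\psi$ is a quasi-inverse of $\phi$ in the sense of \eqref{QuasiInverse} with $m=\mathscr K^{-1}$ and $M=\mathscr K$; by (Q2), $\phi$ obeys the generalized doubling condition with $\lambda$-constant $C_\phi(\mu)=\mathscr K_\mu$. The lemma then delivers $\psi^{-1}(\lambda t)\le C_{\psi^{-1}}(\lambda)\,\psi^{-1}(t)$ with $C_{\psi^{-1}}(\lambda)=C_\phi(M\lambda/m)=C_\phi(\lambda\mathscr K^2)=\mathscr K_{\lambda\mathscr K^2}$. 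Interchanging the roles of $\phi$ and $\psi$ — now using the \emph{second} inequality in (Q1), which says $\phi$ is a quasi-inverse of $\psi$ with the same $m,M$, together with the doubling of $\psi$ from (Q2) — gives $\phi^{-1}(\lambda s)\le\mathscr K_{\lambda\mathscr K^2}\,\phi^{-1}(s)$. Setting $C_\lambda=\mathscr K_{\lambda\mathscr K^2}$ finishes (Q3), and it matches the constant advertised in the statement.

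\textbf{Where the real work sits.} There is no genuine obstacle here beyond careful bookkeeping: everything follows from Theorem~\ref{GlobalInversion}, Proposition~\ref{ThrePointsRatio}, and the quick doubling lemma. The one point that deserves attention is the constant arithmetic in Step~3 — correctly tracking how the quasi-inversion constant $\mathscr K$ of (Q1) enters the argument of the doubling-constant function when the lemma is applied, so that $C_\lambda=\mathscr K_{\lambda\mathscr K^2}$ emerges — and checking that the hypothesis of the lemma, namely that the composed functions are comparable to the identity on the \emph{entire} half-line, is available; this is exactly what the global (rather than merely local) statement of Theorem~\ref{GlobalInversion} supplies.
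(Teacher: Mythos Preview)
Your proposal is correct and matches the paper's approach exactly: the theorem is stated in the paper as a summary of the section, with (Q1) coming from Theorem~\ref{GlobalInversion}, (Q2) from the doubling proposition (inequalities \eqref{DoublingOfInverseModulus} and \eqref{DoublingForf}), and (Q3) from the quick lemma on doubling \eqref{DoublingForInverses} applied with $m=\mathscr K^{-1}$, $M=\mathscr K$. Your bookkeeping on the constant $C_\lambda=\mathscr K_{\lambda\mathscr K^2}$ is accurate and your remark that the global (all $t,s\ge 0$) version of quasi-inversion is what the lemma requires is well taken.
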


Let us now proceed to more general mappings of bi-conformal energy.
\section{A handy metric in $\,\mathbb R^n\,\simeq\, \mathbb R^{n-1} \times \mathbb R$}

It will be convenient to consider the space $\,\mathbb R^n\,$ as Cartesian product $\,\mathbb R^{n-1} \times  \mathbb R\,$,  with the purpose of using cylindrical coordinates.  Accordingly,
 $$
 \mathbb R^n = \mathbb R^{n-1} \times \mathbb R  \,=\, \big \{ X  = (x,t)  ; \; x = (x_1, ... , x_{n-1}) \in \mathbb R^{n-1}\;\,\textnormal{and}\,\; t \in \mathbb R \,\big\}
 $$
 Hereafter, we  change the  notation of the variables;  the lowercase letter  $\,x\,$ designates a point  $\,(x_1, ... x_{n-1}) \in \mathbb R^{n-1}\,$  while the uppercase letter $\,X = (x,t)\,$ is reserved for points in $\,\mathbb R^n\,$.
 The Euclidean norm of $\,x \in \mathbb R^{n-1}\,$ is denoted by $\, |x| \bydef \sqrt{x_1^2 + \,\cdots\,  + x_{n-1}^2 }  \,$.
The space $\,\mathbb R^{n-1} \times \mathbb R\,$ is furnished with the norm

 $$
 \norm X \norm  \bydef  |x|  +  |t|\,,\;\;\textnormal{for}\;\; X = (x, t) = (x_1, ..., x_{n-1} , t )  \in \mathbb R^{n-1} \times \mathbb R
 $$
In this metric the closed unit ball in $\, \mathbb R^{n-1} \times \mathbb R\,$ becomes the Euclidean double cone
 $$
 \mathcal C \,=\, \{ (x,t) \in \mathbb R^n\; ; \; |x| + |t| \leqslant 1\;\; \} \; = \;\mathcal C_+ \cup \mathcal C_- \;\;
 $$
where we split $\,\mathcal C \,$ into the upper and lower cones:
 $$
 \mathcal C_+ \,=\, \{ (x,t)  ; \; |x| + t \leqslant 1\;,\; t \geqslant 0\; \}\;\;,\;\; \mathcal C_- \,=\, \{ (x,t)  ; \; |x| - t \leqslant 1\;,\; t \leqslant 0\; \}
 $$

\section{The idea of the construction of  $\,H : \mathcal C \onto \mathcal C\,$}
Our construction of a bi-conformal energy map  $\,H : \mathcal C \onto \mathcal C\,$, whose optimal modulus of continuity at the origin coincides with that of the inverse map,  will be carried out in two steps. First we construct a  homeomorphism $\,H  : \mathcal C_+ \onto \mathcal C_+\,$ of finite conformal-energy which equals the identity on $\,\partial \,\mathcal C_+\,$. Its inverse  map $\,F \bydef H^{-1} \,:\,\mathcal C_+ \onto \mathcal C_+\,$ will also have finite conformal-energy. The substance of the matter is that  their optimal moduli of continuity ($\,\omega_{_H}\,$ and $\,\omega_{_F} \,$, respectively)  are inverse to each other; thus generally not equal. In fact $\omega_H$ will be stronger that $\omega_F$.  In the second step we  adopt  the modulus of continuity of $\,F : \mathcal C_+ \onto \mathcal C_+\,$  to an extension of $\,H\,$ to $\mathcal C_-$,  simply by reflecting  $\,F\,$ twice about $\,\mathbb R^{n-1}\,$. Let the reflection $\,\mathfrak r : \mathbb R^n \onto \mathbb R^n\,$ be defined by $\,\mathfrak r(x,t) = (x, -t)\,$.  This gives rise to  a map  $\,\mathfrak r \circ F \circ \mathfrak r\, : \mathcal C_- \onto \mathcal C_-$, which we glue to $\,H  : \mathcal C_+ \onto \mathcal C_+\,$ along the common base  $\,\partial \mathcal C_+ \cap \partial \mathcal C_- \subset \mathbb R^{n-1}\,$.  Precisely,   the desired  homeomorphism $\,H : \mathcal C \onto \mathcal C\,$, still denoted by $\, H\,$,  will be defined by the rule

\begin{equation}\label{DefinitionOfH}
 H  \bydef   \left\{\begin{array}{l}
  H : \mathcal C_+ \onto \mathcal C_+  \\
  \mathfrak r \circ F \circ \mathfrak r\, : \mathcal C_- \onto \mathcal C_- \end{array} \right.
 \end{equation}

Its inverse, also denoted by $\,F : \mathcal C \onto \mathcal C \,$,  is defined analogously  by interchanging the roles of $\,F\,$ and $\, H\,$.

\begin{equation}\label{DefinitionOfF}
 F  \bydef   \left\{\begin{array}{l}
  F : \mathcal C_+ \onto \mathcal C_+  \\
  \mathfrak r \circ H \circ \mathfrak r\, : \mathcal C_- \onto \mathcal C_- \end{array} \right.
 \end{equation}
As a result, the  optimal modulus of continuity of $\,H\,$  will be attained in the upper cone $\,\mathcal C_+\,$, whereas the  optimal modulus of continuity of $\,F\,$ will be attained in the lower cone $\, \mathcal C_-\,$. Clearly, they are the same for the double cone $\mathcal C = \mathcal C_+ \cup \mathcal C_-$, and this is the essence of our construction. \\
Explicit formula for $\,H\,$ can easily be stated, see Definition  \ref{DEFofH} in Section \ref{DefinitionH}. Since $\, H : \mathcal C \onto \mathcal C\,$ and its inverse $\, F \bydef H^{-1}  : \mathcal C \onto \mathcal C \, $ are both equal to the identity on $\,  \partial \mathcal C \,$ we can  extend them to $\,\mathbb R^n\,$ as the identity outside $\,\mathcal C\,$. Whenever it is convenient, we shall speak of $\,H : \mathbb R^n \onto \mathbb R^n\,$  and its inverse $\,F : \mathbb R^n \onto \mathbb R^n\,$ as homeomorphisms of the entire space $\,\mathbb R^n\,$ onto itself.

 \vskip 0.1cm
\begin{figure}[!h]
  \begin{center}
\includegraphics*[height=2.0in]{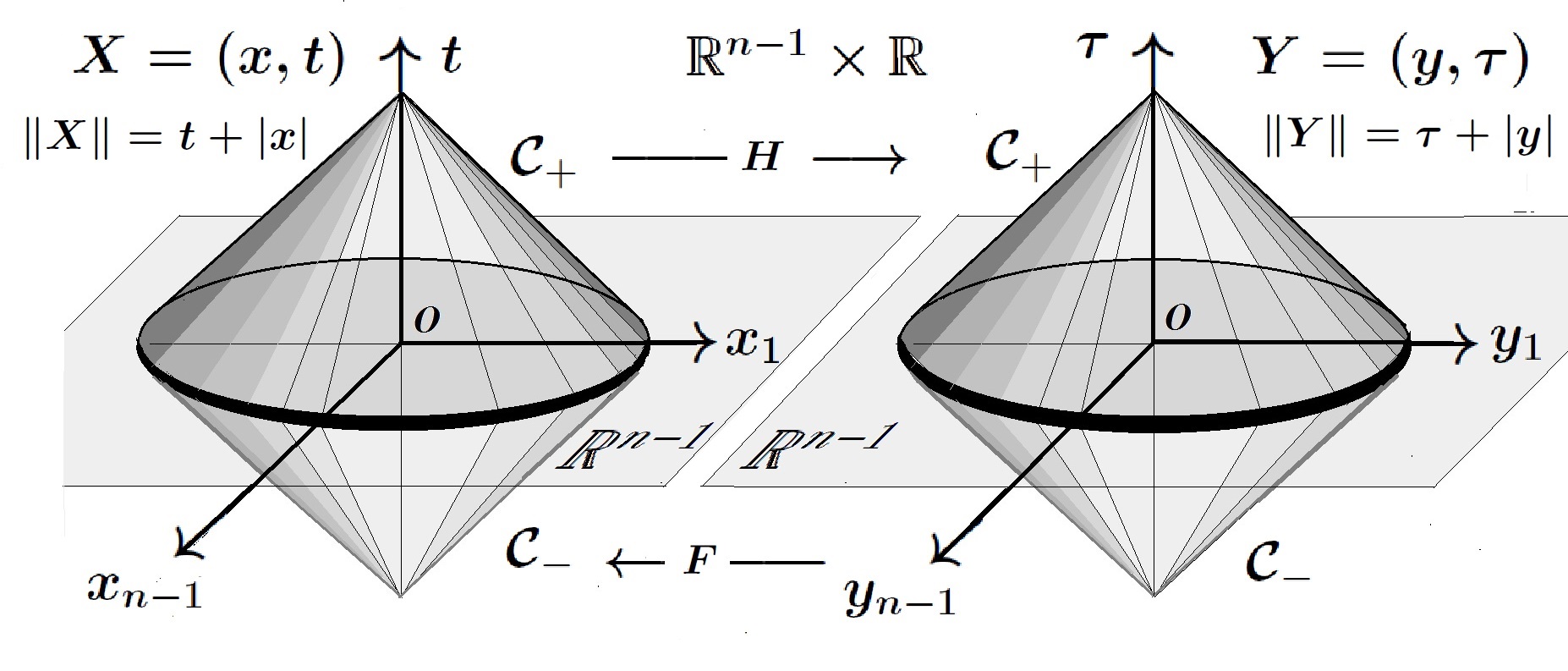}
\caption{A mapping $\,H : \mathcal C \onto \mathcal C\,$ and its inverse $\,F : \mathcal C \onto \mathcal C\,$, will have the same optimal modulus of continuity at the center of $\mathcal C$.}\label{ConformalEnergyMapping}
\end{center}
\end{figure}

\section{Preconditions on the modulus of continuity\\ and the representative examples} \label{Conditions}
 Let us introduce a fairly general class of moduli of continuity to be considered.  These  classes are intended to unify the proofs. It will also give us an aesthetic appearance of the inequalities.  On that account, our moduli of continuity, will be made of functions  $\,\phi : [0, 1]  \onto [ 0, 1]\,$ in $\,\mathscr C[0, 1] \cap \mathscr C^1(0, 1]\,$ such that:
\begin{itemize}
\item [$(\textbf C_1) $]

$\,\phi (0) = 0\,,\; \phi(1) = 1\,$ \; \,( \textnormal{can be  extended by} $\phi(s) = s\,$   \;\textnormal{for}\; $\,s\geqslant 1\,$)
    \item [$(\textbf C_2) $]
   \begin{equation}\label{ConditionC2}
     \phi'(s)  \leqslant \frac{\phi(s)}{s} \leqslant M [\phi'(s)]^2 \;,\; \textnormal{for some contant} \; 1 \leqslant M \,< \infty\,
     \end{equation}
    \item [$(\textbf C_3) $] \textit{Finite Energy Condition} :

\begin{equation}\label{FiniteEnergyCondition}
 E[\phi] \bydef \int_0^1   |\phi(s)| ^n \, \frac{ \textnormal {d} s}{s}\; < \infty\,.
\end{equation}

\end{itemize}

As a consequence of Conditions $\,(\textbf C_1)\,$ and $\,(\textbf C_2)\,$  we have:
\begin{itemize}
\item
\begin{equation}
 \lambda(s) \bydef \frac{\phi(s)}{s}\,\geqslant  \phi'(s) \geqslant \frac{1}{M} \,\,\;\; \textnormal{for all }\,\, \,0 < s \leqslant 1\,\end{equation}

In the forthcoming representative examples (except for $\,\phi(s) \equiv s )\,$ we have even stronger property; namely,  $\, \lim_{s \rightarrow 0} \phi'(s) = \infty\,$ .
\item  The function $\, \lambda(s)\,$ is non-increasing. This follows from
\begin{equation}\label{LambdaPrime}
  \lambda'(s) =  \frac{\phi'(s)}{s} \;-\, \frac{\phi(s)}{s^2}\; \leqslant 0.
  \end{equation}
  \item  Thus in fact,   \begin{equation}\label{GammaBigger1}
  \,\lambda(s)\;=\;  \frac{\phi(s)}{s} \geqslant \;\frac{\phi(1)}{1}  = 1 \,\;, \; \textnormal{for all}\;  \, 0 < s \leqslant 1\,\end{equation}
\end{itemize}

\subsection{Representative examples}
\begin{itemize}
\item [$(\textbf E_0) $] For $\,\;\;0 < \varepsilon \leqslant 1\,$,  we set
\begin{equation}\label{Example0}
\phi_{_0}(s) = s^{\,\varepsilon }\;. \;\;\;\;\textnormal{In the  borderline case,}   \;\;\,\phi(s) = s\,\nonumber
\end{equation}
\item  [$(\textbf E_1) $] For $\, \frac{1}{n} < \alpha \leqslant 1\,$, we set
\begin{equation} \label{Example1}
\phi_{_1}(s)  \,=\, \log^{-\alpha} \left(\frac{e}{s}\right)\;\; = \left(1 + a_1 \log\frac{1}{s} \right) ^{-\alpha}  \nonumber
\end{equation}

\item [$(\textbf E_2) $] For $\, \frac{1}{n} < \alpha \leqslant 1\,$, we set
\begin{equation} \label{Example2}
\phi_{_2}(s) =  \left(1 + a_1 \log\frac{1}{s} \right)^{-\frac{1}{n}}\left(1 + a_2 \log \log \frac{e}{s} \right) ^{-\alpha}\nonumber
\end{equation}

\item [$(\textbf E_3) $]For $\, \frac{1}{n} < \alpha \leqslant 1\,$, we set
\begin{equation} \label{Example3}
\phi_{_3}(s) =  \left(1 + a_1 \log\frac{1}{s} \right)^{-\frac{1}{n}}\left(1 + a_2 \log \log \frac{e}{s} \right) ^{-\frac{1}{n}}   \left(1 + a_3 \log \log\log \frac{e^e}{s} \right) ^{-\alpha}\nonumber
\end{equation}
\end{itemize}
Continuing in this fashion, we define a sequence of functions $\,\phi_{_k}\,,\;  k = 0,1,2, ...\,$ in which the last product-term in the round parantheses involves $\,k\,$-times iterated logarithm and $\,(k-1)\,$-times iterated power of $\,e\,$. All the above functions can be extended by setting $\,\phi_k(s) \equiv s,$ for $\, s \geqslant 1\,$.

\begin{remark}The coefficients  $\,a_k \,$ in the above formulas are adjusted to ensure  the inequality $\, \phi'(s)  \leqslant \frac{\phi(s)}{s} \,$, which  is required by Condition  $(\textbf C_2)\,$. This works well with $\,a_k \bydef  (1 - \frac{1}{n})^{k-1} \,$. Indeed, the reader may wish to verify that the expression $\,\frac{s \phi_k'(s)}{\phi_k(s)} \,$ is increasing, thus assumes its maximum value at $\,s = 1\,$. It is then readily seen that its maximum value is not exceeding $\,\frac{1}{n} \, ( a_1 + a_2 + ... + a_{k-2} ) \; +\; \alpha\, a_k  \;= \; 1 - \big(1 - \alpha \big) \left( 1 - \frac{1}{n} \right)^{k-1}\; \leqslant 1\,$.
\end{remark}
\section{The definition of $\,H : \mathcal C_+ \onto \mathcal C_+\,$\,} \label{DefinitionH}
First we set $\,H\,$ on the vertical axis of the upper cone by the rule.
$$
H(\textbf{0},t)  = (\textbf{0},\phi(t))\, .  
$$
Here and below $ (\textbf{0},t) \bydef  (0,...,0 \,,\, t) \in \mathbb R^{n-1} \times \mathbb R $.
We wish $\,H\,$ to be the identity map on the base of the cone, which consists of points $\,(x, 0) \in \mathbb R^{n-1} \times \mathbb R\,$ with $\,|x| \leqslant 1\,$.
The idea is to connect $\,(x, 0)\,$ with the point $\,(\textbf{0}, |x|\,)\,$ by a straight line segment and map it linearly onto the straightline segment with endpoints at  $\,(x, 0)\,$ and $\,(\textbf{0}, \phi(|x|)\,)\,$. Explicitly,
\begin{definition} \label{DEFofH}
The map $\,H : \mathcal C_+ \onto \mathcal C_+\subset \mathbb R^{n-1} \times \mathbb R\,\,$ is given by the formula
\begin{equation}\label{DefOfHonCplus}
H(x, t)  \bydef  (x, \, t \,\lambda(t + |x|) \,)\; ,\,\textnormal{for}\;  0 \leqslant t \leqslant 1 \;\;\textnormal{and} \; |x| + t \leqslant 1
\end{equation}
where we recall that $\, \lambda (s) \bydef \frac{\phi(s)}{s}\,\; \textnormal{for}\;\; 0 < s \leqslant 1\,.$
\end{definition}
Indeed, for $\, \alpha , \beta \geqslant 0\,$ with $\, \alpha + \beta = 1\,$,   we have
$\,H\big[ \alpha\,(x , 0 ) \;+\; \beta\, (\textbf{0 },|x|) \,\big]   =  \alpha\,(x , 0 ) \;+\; \beta \, (\textbf{0 },\phi(|x|)\,$,
which means that $\,H\,$ is a linear transformation between the above-mentioned segments. A formula for the inverse map $\,F : \mathcal C_+ \onto \mathcal C _+\,$  is not that explicit.
 \begin{figure}[!h]
  \begin{center}
\includegraphics*[height=2.0in]{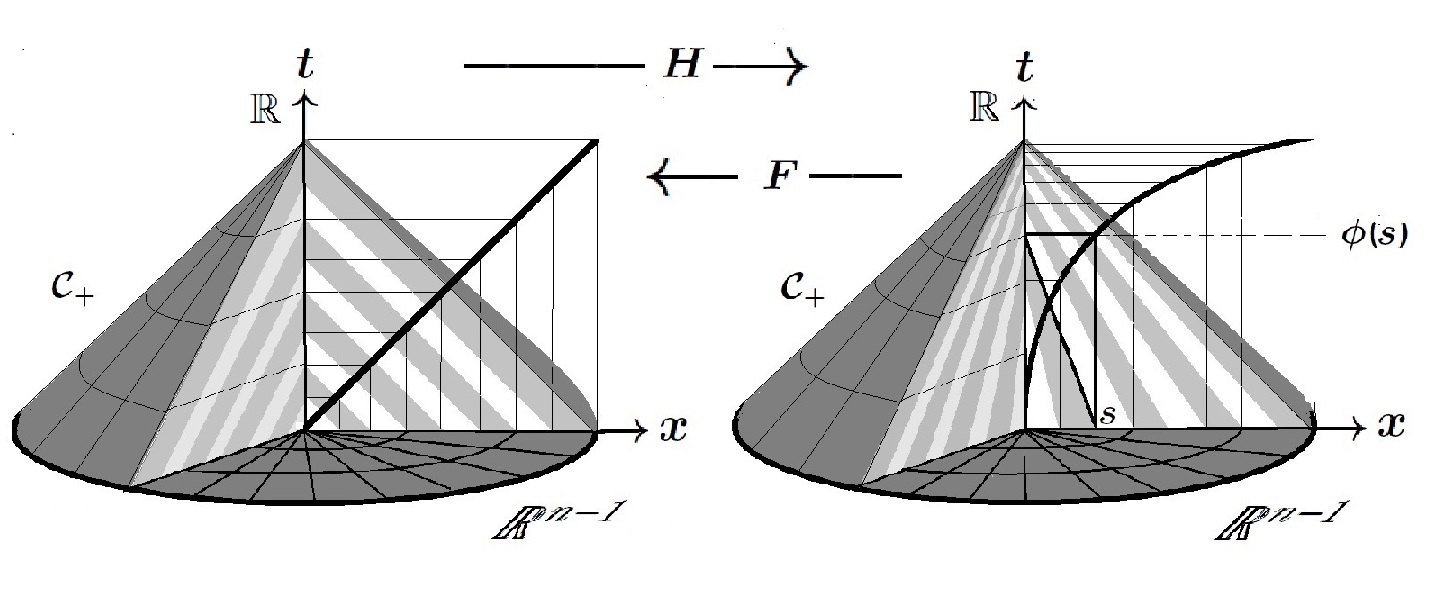}
\caption{Diagonals of rectangles built on the curve $\,t = \phi(s)\,$. The map $\,F\,$ is linear on each such diagonal, as well as on their rotations.  }\label{HandFmappings}
\end{center}
\end{figure}

\section{The Jacobian matrix of $\,H\,$ and its inverse} \label{Jacobian}

A straightforward computation of the Jacobian matrix of $\,H\,$, at the point  $\,X \bydef (x,t) = (x_1, ... , x_{n-1} , t ) \in \mathbb R^{n-1} \times \mathbb R \,$ shows that
\begin{equation}\label{MatrixDH}
DH(x ,\;t)   =
\left( \begin{array}{cccccc}
1 & 0 & 0 \ldots & 0 & 0 \\
$\;$\\
0 & 1 & 0  \ldots & 0 & 0\\
\vdots & \vdots & \vdots  \;\;\;\;\vdots & \vdots & \vdots\\
0 & 0 & 0  \ldots & 1 & 0\\
$\;$\\
\mathfrak D_1 & \mathfrak D_2 & \mathfrak D_3  \ldots & \mathfrak D_{n-1} & \mathfrak D\\
\end{array} \right)
\end{equation}
where  $\, \mathfrak D_i =  \big[\,t \,\lambda ' (t+ |x|)\,\big] \,\frac{x_i}{|x|}\,$ \;and $\, \mathfrak D =  \lambda(t+ |x|)\;  + \; t\, \lambda' (t+ |x|)\,$ is the Jacobian determinant, later also denoted by $\,\mathbf J_H(X)\,$ .
Then the inverse matrix $\,(DH)^{-1}\,$  takes the form

\begin{equation}\label{MatrixDF}
(DH)^{-1}   = \frac{1}{\mathfrak D}\,
\left( \begin{array}{cccccc}
\mathfrak D & 0 & 0 \ldots & 0 & 0 \\
$\;$\\
0 & \mathfrak D & 0  \ldots & 0 & 0\\
\vdots & \vdots & \vdots  \;\;\;\;\vdots & \vdots & \vdots\\
0 & 0 & 0  \ldots & \mathfrak D & 0\\
$\;$ \\
-\mathfrak D_1 & - \mathfrak D_2 & -\mathfrak D_3  \ldots & -\mathfrak D_{n-1} & 1\\
\end{array} \right)
\end{equation}
Square of the Hilbert Schmidt norm  of a matrix is the sum of squares of its entries.  Accordingly,
\begin{equation} \label{NormDH}
| DH |^2 \, = \, n-1 + \big[\,t \,\lambda ' (t+ |x|)\,\big]^2 \, + \big[\,\lambda(t+ |x|)\,  + \, t\, \lambda' (t+ |x|)\,\big]^2
\end{equation}
and
\begin{eqnarray}\label{NormDHinverse}
|\,(DH)^{-1}|^2 \;&=&  \; \frac{ 1 \;} {  \, \big[\,\lambda(t+ |x|)\,  + \, t\, \lambda' (t+ |x|)\,\big]^2} \; + \nonumber\\
& &   \frac{ \big[\,t \,\lambda ' (t+ |x|)\,\big]^2\;} {  \, \big[\,\lambda(t+ |x|)\,  + \, t\, \lambda' (t+ |x|)\,\big]^2} \;\; +\;\; n-1
\end{eqnarray}

\section{The Jacobian determinant $ \mathfrak D  \;=\; \mathbf J_H(X) $ }
We have the following bounds of the Jacobian determinant, including a uniform lower bound for all $\; X = (x, t)\, \in \mathcal C_+\,$.
\begin{equation} \label{LowerBoundJacobian}\, \lambda(\norm X\norm ) \geqslant  \mathbf J_H(X)\;\geqslant  \phi'(\norm X \norm)  \geqslant \frac{1}{M}
 \end{equation}
 \begin{proof} Using the notation $\, \norm X\norm = s = t + |x| \leqslant 1\,$, we write
\begin{eqnarray}\label{DJacobian}
\mathfrak D  \;&=&  \frac{\textnormal{d}}{\textnormal{d} t} \Big( t \, \lambda(t + |x|)  \Big) =  \frac{\textnormal{d}}{\textnormal{d} t} \Big( t \;\frac{\phi(t +|x|)}{t+ |x|}  \Big) = \nonumber\\
& &   \frac{\phi(s)}{s}  \;+ t \left(  \frac{\phi'(s)}{s}\; - \frac{\phi(s)}{s^2}\right) = \frac{\phi(s)}{s} \left( 1 - \frac{t}{s}\right) \; +  t  \, \frac{\phi'(s)}{s} \nonumber \\
\end{eqnarray}

Now, since $\,\phi'(s) \leqslant \frac{\phi(s)}{s}   = \lambda(s)\,$ , it follows that  $\,\mathfrak D \leqslant \lambda(s)\,$. On the other hand $\, \phi'(s) \geqslant \frac{1}{M}\,$ and $\,\frac{\phi(s)}{s} \geqslant 1 \geqslant \frac{1}{M}\,$, whence $\,\mathfrak D \geqslant \frac{1}{M}\,$.
 \end{proof}

 \section{Conformal-energy of $\,H: \mathcal C_+ \onto \mathcal C_+\,$\,}
 In the forthcoming computation the "implied constants" depend only on the dimension $\,n \geqslant 2 \,$.

 \begin{lemma} We have
 \begin{equation}\label{EnergyOfH}
 \int_{\mathcal C_+}| DH(x,t) |^n \,\, \textnormal{d}x\, \textnormal{d}t \; \preccurlyeq  \;E[\phi]
 \end{equation}
 \end{lemma}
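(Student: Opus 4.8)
The plan is to estimate $|DH|^n$ pointwise using the explicit formula \eqref{NormDH} and then integrate over $\mathcal C_+$ in cylindrical coordinates, reducing everything to the one-dimensional integral $E[\phi] = \int_0^1 \phi(s)^n \, \tfrac{\dtext s}{s}$. First I would observe that by \eqref{NormDH}, writing $s = t + |x|$,
\[
|DH(x,t)|^n \preccurlyeq 1 + \bigl[t\,\lambda'(s)\bigr]^n + \bigl[\lambda(s) + t\,\lambda'(s)\bigr]^n,
\]
since for fixed $n$ the $\ell^2$ and $\ell^n$ norms of a vector of $n+1$ bounded-length blocks are comparable. The constant term $1$ contributes only $|\mathcal C_+| \preccurlyeq 1 \preccurlyeq E[\phi]$ (note $E[\phi] \ge \int_0^1 \phi(s)^n \tfrac{\dtext s}{s}$ and by $(\textbf C_2)$–$(\textbf C_3)$ combined with $\phi(s)/s \ge 1$ this is bounded below by a positive constant, or one simply absorbs it). The Jacobian determinant $\mathfrak D = \lambda(s) + t\lambda'(s)$ satisfies $\tfrac1M \le \mathfrak D \le \lambda(s)$ by \eqref{LowerBoundJacobian}, so $[\lambda(s) + t\lambda'(s)]^n \le \lambda(s)^n$. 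Thus the heart of the matter is to bound $\int_{\mathcal C_+} \lambda(s)^n \,\dtext x\,\dtext t$ and $\int_{\mathcal C_+} \bigl[t\,\lambda'(s)\bigr]^n\,\dtext x\,\dtext t$ by $E[\phi]$.

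Next I would set up the change of variables. In $\mathbb R^{n-1} \times \mathbb R$ with $|x| = \rho$, the cone $\mathcal C_+$ is $\{(\rho,\theta,t): \rho \ge 0,\ t \ge 0,\ \rho + t \le 1\}$ and $\dtext x = \rho^{n-2}\,\dtext\rho\,\dtext\sigma(\theta)$, so any integrand depending only on $s = \rho + t$ produces
\[
\int_{\mathcal C_+} g(\rho+t)\,\dtext x\,\dtext t \;\preccurlyeq\; \int_0^1 g(s) \Bigl(\int_{\rho+t\le s,\ \rho,t\ge0} \rho^{n-2}\,\dtext\rho\,\dtext t\Bigr)'\,\dtext s,
\]
and the inner region has volume $\preccurlyeq s^n$, whose derivative in $s$ is $\preccurlyeq s^{n-1}$. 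Applying this with $g(s) = \lambda(s)^n = \phi(s)^n/s^n$ gives $\int_{\mathcal C_+}\lambda(s)^n \preccurlyeq \int_0^1 \phi(s)^n s^{-n}\cdot s^{n-1}\,\dtext s = \int_0^1 \phi(s)^n\,\tfrac{\dtext s}{s} = E[\phi]$, exactly as wanted. For the term $[t\lambda'(s)]^n$, I use $t \le s$ and the key consequence of $(\textbf C_2)$: since $\lambda(s) = \phi(s)/s$ is non-increasing with $\lambda'(s) = \phi'(s)/s - \phi(s)/s^2 \le 0$, we have $|\lambda'(s)| = \lambda(s)/s - \phi'(s)/s \le \lambda(s)/s$, hence $|t\lambda'(s)| \le s\cdot \lambda(s)/s = \lambda(s)$, and so $[t\lambda'(s)]^n \le \lambda(s)^n$, which is already handled.

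The main obstacle — really the only subtlety — is justifying the reduction of the $n$-dimensional integral to the radial one cleanly, i.e. controlling the volume of the slice $\{\rho + t \le s\}$ inside the cone and differentiating under the integral sign; this is a routine coarea/Fubini computation but must be done carefully because $\lambda$ and $\lambda'$ can blow up at $s = 0$ (indeed $\lambda'(s) \to -\infty$ in the representative examples). The saving grace is that all the blow-up is absorbed into $\lambda(s)^n = \phi(s)^n/s^n$, and the cone geometry supplies precisely the factor $s^{n-1}\,\dtext s$ that converts $s^{-n}$ into $s^{-1}$, matching the finite-energy condition $(\textbf C_3)$. So after the pointwise bound $|DH|^n \preccurlyeq 1 + \lambda(s)^n$ and the cylindrical change of variables, the proof closes with
\[
\int_{\mathcal C_+} |DH(x,t)|^n\,\dtext x\,\dtext t \;\preccurlyeq\; 1 + \int_0^1 \phi(s)^n\,\frac{\dtext s}{s} \;\preccurlyeq\; E[\phi].
\]
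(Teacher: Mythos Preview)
Your proof is correct and follows essentially the same route as the paper's. Both arguments start from \eqref{NormDH}, reduce $|DH|^n$ pointwise to $1 + \lambda(s)^n + [t\,\lambda'(s)]^n$ with $s = t + |x|$, observe $1 \preccurlyeq E[\phi]$ via $\phi(s)/s \ge 1$, and then integrate in cylindrical coordinates; the paper carries out the change of variables by the direct substitution $t \mapsto s - |x|$ and Fubini, while you phrase it as a coarea/slice-volume computation, but the two are identical and both produce the factor $s^{n-1}\,\textnormal{d}s$ that converts $\lambda(s)^n = \phi(s)^n s^{-n}$ into the integrand of $E[\phi]$. Your treatment of the $t\,\lambda'(s)$ term via $|t\,\lambda'(s)| \le s \cdot \lambda(s)/s = \lambda(s)$ is exactly the paper's bound $|\lambda'(s)| \le \phi(s)/s^2$ combined with $t \le s$, so no new idea enters there either.
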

 \begin{proof}
 Formula (\ref{NormDH}) yields the inequality:
 \begin{eqnarray}\label{ComputationOfEnergy1}
 \int_{\mathcal C_+}| DH(x,t) |^n \,\, \textnormal{d}x\, \textnormal{d}t  \,&\preccurlyeq &\, 1 + \int_{t + |x| \,\leqslant 1}|\, \lambda(t + |x|)\, |^n \,\, \textnormal{d}x\, \textnormal{d}t  \nonumber \\
  & &  + \int_{t + |x|\, \leqslant 1} t^n | \,\lambda'(t + |x|) \,|^n  \,\, \textnormal{d}x\, \textnormal{d}t
 \end{eqnarray}
 Obviously, for the constant term we have $ 1  \preccurlyeq  \;E[\phi]\,$.
 For the first integral in the right hand side we make the substitution $\, t  =  s  -  |x|\,$ and use Fubini's formula to obtain

 \begin{eqnarray}
  & & \int_{t + |x|}\,| \lambda(t + |x|)\, |^n \, \textnormal{d}x\, \textnormal{d}t = \int_{|x| \leqslant s \leqslant 1}|\, \lambda(s)\, |^n \,\, \textnormal{d}x\, \textnormal{d}s \, =\,  \nonumber \\
  & & = \int_0^1 |\lambda(s)|^n \left(\int_{|x| \leqslant s} \textnormal{d} x \right) \textnormal{d}s  = \frac{\omega_{n-2}}{n-1} \int_0^1  s^{n-1} |\lambda(s)|^n  \textnormal{d}s \, =\, \; \nonumber \\
  & &   =  \frac{\omega_{n-2}}{n-1} \int_0^1  |\phi(s)|^n  \;\frac{\textnormal{d}s}{s} \,\; \preccurlyeq \,\; E[\phi] \;\nonumber
 \end{eqnarray}
 For the second integral in (\ref{ComputationOfEnergy1}),  we make the same substitution $\, t  =  s  -  |x|\,$ and proceed as follows

 \begin{eqnarray}\label{ComputationOfEnergy}
  & & \int_{t + |x| \leqslant 1 } t^n|\, \lambda'(t + |x|)\, |^n \, \textnormal{d}x\, \textnormal{d}t = \int_{|x| \leqslant s \leqslant 1}\left(s - |x|\right)^n |\, \lambda'(s)\, |^n \,\, \textnormal{d}x\, \textnormal{d}s \nonumber \\
   & & = \int_0^1 |\lambda'(s)|^n \left(\int_{|x| \leqslant s} \left(s - |x|\right)^n \textnormal{d} x \right) \textnormal{d}s  \nonumber \\
  & &  =\,c_n\int_0^1 s ^{2n-1} |\lambda'(s)|^n  \textnormal{d}s  \preccurlyeq  \int_0^1  |\phi(s)|^n \,\frac{ \textnormal{d}s }{s}\, =\,  E[\phi] \, \;\nonumber
 \end{eqnarray}
 Here, we used  the inequality $\,|\lambda'(s)\,|  =  \frac{\phi(s)}{s^{\,2}} -  \frac{\phi'(s)}{s}  \leqslant    \frac{\phi(s)}{s^{\,2}}\,$. \\The proof is complete.
\end{proof}

 \section{Conformal-energy of the inverse map}\label{ConEnerF}

 This brings us back to the seminal work \,\cite{AIMO}\, on extremal mappings of finite distortion. Going into this in detail would take us too far afield, so we confine ourselves to a simplified variant.\\
 Consider a homeomorphism $\, H : \mathbb X \onto \mathbb Y\,$ between bounded domains of Sobolev class $\,\mathscr W^{1,n}_{\textnormal{loc}}(\mathbb X, \mathbb Y)\,$ and assume (just to make it easier) that the Jacobian $\,\mathbf J_H \bydef \textnormal{det [DH]}\,$ is positive almost everywhere, as in (\ref{LowerBoundJacobian}).

 \begin{definition}\label{InnerDistortion}
The differential expression
\begin{equation}\label{InnerDist}\,\mathbf K_H (X) \bydef   \big |\,[DH(X)]^{-1}\,\big |^n \; \mathbf J_H(X)  \;=\; \frac{\big|\,D^\sharp H(X)\,\big|^n}{ \; \left [\,\mathbf J_H(X)\,\right]^{n-1} }\,,\end{equation}
is called the \textit{inner distortion} function of $\,H\,$. Here the symbol $\,D^\sharp H\,$ stands for the cofactor matrix of $\,DH\,$, defined by Cramer's rule.
\end{definition}
 The following identity was first observed with a complete proof of it in  \cite{AIMO}\,, see Theorem 9.1 therein.
 \begin{proposition}
 Under the assumptions above, if $\, \mathbf K_H  \in \mathscr L^1(\mathbb X)\,$ then the inverse map $\,F :\mathbb Y \onto \mathbb X\,$ belongs to $\,\mathscr W^{1,n}(\mathbb Y, \mathbb X)\,$  and
 \begin{equation}\label{AnIdentity}
 \int_\mathbb Y \big| \,DF(Y)\,\big|^n\,\textnormal{d} Y \;\;=\;\;   \int_\mathbb X  \mathbf K_H (X)\;\textnormal{d} X\,.
 \end{equation}
 \end{proposition}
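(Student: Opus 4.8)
Proof plan for the identity $\int_{\mathbb Y}|DF|^n = \int_{\mathbb X}\mathbf K_H$.

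The plan is to compute the left-hand integral by the change of variables $Y = H(X)$ and then recognize the integrand as the inner distortion. The two ingredients needed are: (i) a formula relating $DF(Y)$ at $Y=H(X)$ to $DH(X)$, and (ii) a legitimate change-of-variables formula for Sobolev homeomorphisms. Under the standing assumption $\mathbf K_H\in\mathscr L^1(\mathbb X)$, together with the hypothesis $H\in\mathscr W^{1,n}_{\mathrm{loc}}(\mathbb X,\mathbb Y)$ and $\mathbf J_H>0$ a.e., one first has to establish that $F\in\mathscr W^{1,n}(\mathbb Y,\mathbb X)$; this is the substantive analytic point. The standard route (as in \cite{AIMO}) is: a homeomorphism in $\mathscr W^{1,n}_{\mathrm{loc}}$ with $\mathbf J_H>0$ a.e.\ satisfies Lusin's condition $(N)$, hence so does its inverse under the distortion hypothesis, and then $F$ is differentiable a.e.\ with $DF(H(X)) = [DH(X)]^{-1}$ for a.e.\ $X$.

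First I would record the pointwise linear-algebra identity: whenever $A\in GL(n)$,
\[
|A^{-1}|^n\,\det A \;=\; \frac{|A^\sharp|^n}{(\det A)^{n-1}},
\]
which is immediate from Cramer's rule $A^{-1} = (\det A)^{-1}A^\sharp$, so $|A^{-1}| = |A^\sharp|/\det A$ (using $\det A>0$), and raising to the $n$-th power and multiplying by $\det A$ gives the claim. Applying this with $A = DH(X)$ and using $DF(H(X)) = [DH(X)]^{-1}$ yields
\[
\big|DF(H(X))\big|^n \,\mathbf J_H(X) \;=\; \mathbf K_H(X) \qquad\text{for a.e. } X\in\mathbb X .
\]

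Next I would invoke the change-of-variables formula. Since $F\in\mathscr W^{1,n}(\mathbb Y,\mathbb X)$ is a homeomorphism, it satisfies condition $(N^{-1})$, equivalently $H$ satisfies $(N)$; this licenses the substitution $Y=H(X)$, $\textnormal dY = \mathbf J_H(X)\,\textnormal dX$, for the non-negative Borel integrand $|DF(Y)|^n$:
\[
\int_{\mathbb Y}\big|DF(Y)\big|^n\,\textnormal dY \;=\; \int_{\mathbb X}\big|DF(H(X))\big|^n\,\mathbf J_H(X)\,\textnormal dX \;=\; \int_{\mathbb X}\mathbf K_H(X)\,\textnormal dX ,
\]
where the last equality is the pointwise identity above. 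Both sides are simultaneously finite or infinite, so the finiteness of $\int_{\mathbb X}\mathbf K_H$ is exactly what gives $F\in\mathscr W^{1,n}$ together with the stated identity.

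The main obstacle is the justification that $F$ is Sobolev and that the a.e.\ differentiation formula $DF\circ H = (DH)^{-1}$ holds — i.e.\ the regularity theory for inverses of $\mathscr W^{1,n}$ homeomorphisms of positive Jacobian, and the validity of the area/change-of-variables formula in this borderline Sobolev class. Since the excerpt explicitly permits citing \cite{AIMO} (Theorem 9.1 therein), I would simply quote that result for this step and present the computation above as the verification of the displayed identity. Everything else is the one-line matrix computation and Fubini-free bookkeeping.
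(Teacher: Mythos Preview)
Your proposal is correct and follows essentially the same approach as the paper: cite \cite{AIMO} (Theorem 9.1) for the substantive regularity step $F\in\mathscr W^{1,n}$, then obtain the identity by the change of variables $Y=H(X)$ using $DF(H(X))=[DH(X)]^{-1}$ and the pointwise relation $|A^{-1}|^n\det A = |A^\sharp|^n/(\det A)^{n-1}$. The paper in fact presents even less detail, citing \cite{AIMO} for the general statement and only writing out the change-of-variables computation in the special case where $H$ is locally Lipschitz on $\mathcal C_+$, so your write-up is slightly more thorough.
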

In our case, since $\,H\,$ is locally Lipschitz on $\,\mathcal C_+\,$, the derivation of this identity is straightforward. Simply, the differential matrix $\, DF(Y)\,$ at the point $\,Y = H(X)\,$ equals $\,[DH(X)]^{-1}\,$. We may change variables $\,Y = H(X)\,$ in the energy integral for $\,F\,$, to obtain
\begin{equation} \nonumber
\int_{\mathcal C_+} \big |\,DF(Y)\big |^n \; \textnormal{d} Y  \; = \int_{\mathcal C_+} \big |\,[DH(X)]^{-1}\,\big |^n \; \mathbf J_H(X) \,\textnormal{d} X  \; \bydef\, \int_{\mathcal C_+} \mathbf K_H (X)  \,\textnormal{d} X  \;
\end{equation}

Now, by (\ref{MatrixDH}) and (\ref{MatrixDF}), we have a point wise inequality
$ \,\mathbf J_H(X)\; \big|\, [DH(X)]^{-1}\, \big|  \leqslant  \sqrt{n-1}\,  \big|\,DH(X)\,\big|\,$, which yields:

\begin{equation}
\mathbf K_H \leqslant  \frac{(n-1)^{\frac{n}{2}} |\,DH\,|^n}{(\mathbf J_H )^{2n-1}}  \leqslant (n-1)^{\frac{n}{2}} M^{2n-1} |\,DH\,|^n  \, \in \mathscr L^1 (\mathcal C_+)
\end{equation}
because $\,\mathbf J_H(X)  \geqslant \frac{1}{M}\,$, by \,(\ref{LowerBoundJacobian}).
\section{Modulus of continuity of  $\,H: \mathcal C_+ \onto \mathcal C_+\,$  }
We start with the straightforward estimates of the modulus of continuity at $\, ({\bf 0},0) \in \mathbb R^{n-1} \times \mathbb R\,$. In consequence of $\,  \lambda(\norm X\norm) \geqslant 1\,$, we have :

\begin{equation}
\norm X \norm = |x|  + t\, \,\leqslant |x| +  t \,\lambda(t + |x|)  \leqslant \, |x| \,\lambda(\norm X\norm ) +   t \,\lambda(\norm X\norm )\; = \,\phi (\norm X \norm ) \nonumber
\end{equation}
Here the middle term $\,|x| +  t \lambda(t + |x|)  =  \norm\, H(X) \norm \,$. Therefore,
 \begin{equation}\label{OptimalBounds}
 \norm X \norm \leqslant \norm H(X)\norm \;\leqslant \phi(\norm X \norm)
\end{equation}

\begin{corollary} The function $\,\phi\,$ is the optimal modulus of continuity of the map $\, H : \mathcal C_+  \onto \mathcal C_+ \,$  at $\, ({\bf 0}, 0) \in \mathbb R^{n-1} \times \mathbb R_+\,$; that is,
\begin{equation}
 \sup_{\norm X\norm = s} \norm H(X) \norm =  \phi(s) \,,\;\; \textnormal{whenever}\;\;  X \in \mathcal C_+ \,\;\;  \textnormal{and}\; 0 \leqslant s \leqslant 1
\end{equation}
\end{corollary}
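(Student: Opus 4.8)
The plan is to combine the pointwise two-sided bound already recorded in~\eqref{OptimalBounds} with the observation that the upper bound $\phi(\norm X\norm)$ is attained along the axis of the cone. First I would invoke the right-hand inequality in~\eqref{OptimalBounds}: for every $X\in\mathcal{C}_+$ with $\norm X\norm=s$ we have $\norm H(X)\norm\le\phi(\norm X\norm)=\phi(s)$, and hence $\sup_{\norm X\norm=s}\norm H(X)\norm\le\phi(s)$. This disposes of one direction without any further work.

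For the opposite inequality I would exhibit an explicit extremal point. Take $X_\star=(\mathbf{0},s)\in\mathbb{R}^{n-1}\times\mathbb{R}$. When $0\le s\le 1$ this point lies in $\mathcal{C}_+$, since $|\mathbf{0}|+s=s\le 1$ and $s\ge 0$, and it satisfies $\norm X_\star\norm=|\mathbf{0}|+|s|=s$. By the prescription $H(\mathbf{0},t)=(\mathbf{0},\phi(t))$ used to set $H$ on the vertical axis — equivalently, directly from Definition~\ref{DEFofH}, since $H(\mathbf{0},s)=(\mathbf{0},\,s\,\lambda(s))=(\mathbf{0},\phi(s))$ — we get $\norm H(X_\star)\norm=\phi(s)$. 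Therefore $\sup_{\norm X\norm=s}\norm H(X)\norm\ge\phi(s)$, and comparing with the previous paragraph yields the asserted identity $\sup_{\norm X\norm=s}\norm H(X)\norm=\phi(s)$. The endpoint case $s=0$ is immediate from $H(\mathbf{0},0)=\mathbf{0}$ and $\phi(0)=0$.

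There is essentially no obstacle here: the analytic content is already packaged in~\eqref{OptimalBounds}, and the only extra ingredient is that among all points with $\norm X\norm=s$ the quantity $\norm H(X)\norm$ is maximized at the apex of the cone, where $H$ reduces to the radial stretch $t\mapsto\phi(t)$. If one wants this even more transparent, note that each segment joining $(x,0)$ to $(\mathbf{0},|x|)$ consists entirely of points with $\norm\cdot\norm=|x|$, and $H$ maps it affinely onto the segment joining $(x,0)$ to $(\mathbf{0},\phi(|x|))$; hence $\norm H(X)\norm$ runs monotonically from $|x|$ up to $\phi(|x|)$ along it. This simultaneously confirms that the bound $\phi$ is sharp and that the supremum is genuinely attained rather than merely approached.
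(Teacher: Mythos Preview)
Your proof is correct and follows essentially the same approach as the paper: invoke the upper bound from~\eqref{OptimalBounds} and then observe that equality is attained at the axial point $X_\star=(\mathbf{0},s)$, where $H(\mathbf{0},s)=(\mathbf{0},\phi(s))$. The paper's own argument is just this two-line observation; your additional remarks about the affine behaviour along segments and the $s=0$ endpoint are correct but not needed.
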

Indeed, the supremum is attained at the point $\,X\, = ({\bf 0}, s)\,$ on the vertical axis of the cone $\,\mathcal C_+\,$, because  $\,H({\bf 0},s) = ({\bf 0}, \phi(s)\,)\,$.
\begin{remark}
It is perhaps worth remarking in advance that both inequalities at (\ref{OptimalBounds}) remain valid in terms of the Euclidean norm of $\,\mathbb R^n\,$ as well, where  $\,|X| = |(x,t)| =  \sqrt{|x|^2 + t^2} \leqslant \norm X \norm  \,$. To this end, since $\,\lambda\,$ is decreasing to its minimum value $\,\lambda(1) = 1\,$,   for $\,X \in \mathcal C_+\,$ we can write
$$
|X|^2  \leqslant |x|^2 + t^2 \lambda^2(\norm X \norm )  \;= |H(X)|^2  \leqslant |x|^2\lambda^2(|X|) +  t^2 \lambda^2(|X|) = \phi ^2(|X|),
$$
Let us record this fact as:
\begin{equation}\label{OptimalBoundsEuclideanSetting}
 | X |\leqslant | H(X)|\;\leqslant \phi(| X |)
\end{equation}
For the inverse map $\, F = F(Y)\,$, these inequalities take the form
\begin{equation}\label{OptimalBounds2}
 \psi(|Y|)  \leqslant |F(Y)| \;\leqslant |Y |\, \leqslant \phi(|Y|) \;\;\textnormal{for all}\; Y \in \mathcal C_+\, \; \textnormal{because}\; s \leqslant \phi(s)
\end{equation}
where $\,\psi\,;\, [0,1] \onto [0,1]\,$ denotes the inverse function of $\,\phi\,$. This, however, does not necessarily imply that $\,F\,$ is Lipschitz continuous, as shown by our representative examples.
\end{remark}

We shall now prove that  $\,H\,$ admits $\,\phi\,$ as global modulus of continuity; that is,  everywhere in $\,\mathcal C_+\,$. Precisely, we have
\begin{proposition}
For $\, X = (x,t) \in\mathcal C_+\,$ and $\, X' = (x',t') \in\mathcal C_+\,$ it holds that

\begin{equation}\label{GlobalModulusContinuityH}
 \norm H(X)  - H(X') \norm  \;\leqslant 4\, \phi(\norm X - X' \norm )\;
\end{equation}
Thus, according to~\eqref{GlobalOptimalModulusOfContinuity},
\[\Omega_H (t) \le 4 \phi (t) \, . \]
\end{proposition}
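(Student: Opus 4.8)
The plan is to estimate $\norm H(X) - H(X')\norm$ by splitting into the horizontal and vertical contributions. Write $X=(x,t)$, $X'=(x',t')$, and recall $H(x,t)=(x,\,t\,\lambda(t+|x|))$. Then
\[
\norm H(X)-H(X')\norm = |x-x'| + \bigl|\,t\,\lambda(t+|x|) - t'\,\lambda(t'+|x'|)\,\bigr| .
\]
The first term $|x-x'|$ is already bounded by $\norm X-X'\norm \le \phi(\norm X-X'\norm)$ by Condition $(\mathbf C_1)$ (since $\phi(s)\ge s$), so it contributes at most one copy of $\phi(\norm X-X'\norm)$. The work is entirely in the vertical term. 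First I would reduce to the case $t'+|x'| \le t+|x|$ (swap the roles of $X,X'$ otherwise), so that $s' \bydef \norm X'\norm \le s \bydef \norm X\norm$, hence $\lambda(s')\ge\lambda(s)$ by monotonicity of $\lambda$ established in~\eqref{LambdaPrime}.

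Next I would introduce the function $g(t) \bydef t\,\lambda(t+|x|)$ and use the one-dimensional bound: for the vertical term I would telescope through the intermediate value $t'\,\lambda(t'+|x|)$, writing
\[
\bigl|\,t\lambda(t+|x|) - t'\lambda(t'+|x'|)\,\bigr| \le \bigl|\,t\lambda(t+|x|) - t'\lambda(t'+|x|)\,\bigr| + t'\,\bigl|\,\lambda(t'+|x|) - \lambda(t'+|x'|)\,\bigr| .
\]
For the first piece I would use that $g(t)=t\lambda(t+|x|)=\phi(t+|x|)\cdot\frac{t}{t+|x|}$, so $g$ is monotone with $0\le g'(t) = \mathfrak D \le \lambda(t+|x|)\le\lambda(\norm{X\wedge X'}\norm)$ (using~\eqref{LowerBoundJacobian} and monotonicity), giving a bound of the form $\lambda(s')\,|t-t'|$. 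Since $\lambda(s')\,|t-t'| \le \lambda(s')\,s' \cdot \frac{|t-t'|}{s'}$ and $\lambda(s')s'=\phi(s')$, and $s'\le\norm X-X'\norm$ would need to be checked — more precisely I expect $|t-t'|\le\norm X-X'\norm$ directly, so I would instead bound $\lambda(s')|t-t'|$ using that $\lambda$ is decreasing and $|t-t'|\le s$: the cleanest route is $\lambda(\sigma)\,\tau \le \phi(\tau)$ whenever $\tau\le\sigma$, which follows from $\lambda(\sigma)\le\lambda(\tau)=\phi(\tau)/\tau$. Applying this with $\sigma = s'$ or a suitable lower bound and $\tau=|t-t'|$ yields $\le\phi(|t-t'|)\le\phi(\norm X-X'\norm)$ after verifying $|t-t'|$ is dominated by the relevant argument of $\lambda$. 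For the second piece, $t'\,|\lambda(t'+|x|)-\lambda(t'+|x'|)| \le t'\,|\lambda'(\xi)|\,\bigl||x|-|x'|\bigr|$ for some intermediate $\xi$; using $|\lambda'(\xi)|\le\phi(\xi)/\xi^2 = \lambda(\xi)/\xi$ and $t'\le\xi$, this is $\le \lambda(\xi)\,\bigl||x|-|x'|\bigr| \le \lambda(\eta)\,|x-x'|$ for $\eta\ge|x-x'|$, which by the same elementary inequality is $\le\phi(|x-x'|)\le\phi(\norm X-X'\norm)$.

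Collecting the three contributions — one from $|x-x'|$, one from each vertical piece — gives $\norm H(X)-H(X')\norm \le 3\,\phi(\norm X-X'\norm)$, and allowing some slack in the intermediate-value arguments (where the argument of $\lambda$ may be slightly smaller than $\norm X-X'\norm$, costing a doubling via the monotonicity of $\phi$ rather than $\lambda$) gives the stated constant $4$. The inequality $\Omega_H(t)\le 4\phi(t)$ is then immediate from the definition~\eqref{GlobalOptimalModulusOfContinuity}. The main obstacle I anticipate is the bookkeeping in the mean-value step: one must ensure that the intermediate point $\xi$ (resp.\ $\eta$) appearing when differentiating $\lambda$ always lies in $(0,1]$ and dominates the relevant increment $|t-t'|$ (resp.\ $|x-x'|$), so that the key elementary inequality ``$\tau\le\sigma \Rightarrow \lambda(\sigma)\tau\le\phi(\tau)$'' applies; handling the boundary cases where $X$ or $X'$ lies on $\partial\mathcal C_+$, or where $t$ or $t'$ vanishes, needs a little care but introduces no new ideas.
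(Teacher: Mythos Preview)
Your overall architecture matches the paper's: split into the horizontal term $|x-x'|\le\phi(\norm X-X'\norm)$ and the vertical term $|t\lambda(\norm X\norm)-t'\lambda(\norm X'\norm)|$, then telescope the latter. The gap is in how you control the pieces of the telescope.

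You route the vertical term through the intermediate value $t'\lambda(t'+|x|)$ and try to bound each piece via the mean-value theorem together with the implication ``$\tau\le\sigma\Rightarrow\lambda(\sigma)\tau\le\phi(\tau)$''. But neither intermediate point produced by MVT is guaranteed to dominate the relevant increment. For the first piece you need $|t-t'|\le\xi_0+|x|$ with $\xi_0$ between $t$ and $t'$; take $X=(0,t)$, $X'=(0,0)$ to see this can fail --- the MVT value $\lambda(\xi_0)$ may be arbitrarily large even though the actual difference $t\lambda(t)=\phi(t)$ is perfectly controlled. For the second piece you need $|x-x'|\le\xi$ with $\xi$ between $t'+|x|$ and $t'+|x'|$; take $x'=0$, $t'$ tiny, $|x|$ moderate to see this fails too. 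Your remark that the slack ``costs a doubling'' understates the issue: the argument of $\lambda$ can be arbitrarily small relative to the increment, not merely off by a factor of $2$, and $\lambda$ blows up at $0$.

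The paper avoids MVT entirely. It telescopes as
\[
t\lambda(s)-t'\lambda(s')=(t-t')\,\lambda(s)+t'\bigl(\lambda(s)-\lambda(s')\bigr),\qquad s=\norm X\norm,\ s'=\norm X'\norm,\ s'\le s.
\]
For the first summand it uses the triangle-inequality observation $2s\ge s+s'\ge\norm X-X'\norm$, which gives
\[
\lambda(s)\le\lambda\bigl(\tfrac12\norm X-X'\norm\bigr)=\frac{\phi(\tfrac12\norm X-X'\norm)}{\tfrac12\norm X-X'\norm}\le\frac{2\,\phi(\norm X-X'\norm)}{\norm X-X'\norm},
\]
hence $|t-t'|\,\lambda(s)\le 2\,\phi(\norm X-X'\norm)$. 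For the second summand it proves the short algebraic lemma
\[
0\le\lambda(A)-\lambda(B)=\frac{\phi(A)-\phi(B)}{A}+\frac{B-A}{A}\,\lambda(B)\le\frac{B-A}{A}\,\lambda(B-A)=\frac{\phi(B-A)}{A}\qquad(0<A\le B\le1),
\]
which with $A=s'$, $B=s$ yields $t'\bigl(\lambda(s')-\lambda(s)\bigr)\le\frac{t'}{s'}\,\phi(s-s')\le\phi(\norm X-X'\norm)$. Summing the three contributions gives the constant $4$. The two ingredients your argument is missing are (i) evaluating $\lambda$ at the \emph{larger} norm $s$, which is always comparable to $\norm X-X'\norm$, and (ii) the direct bound $\lambda(A)-\lambda(B)\le A^{-1}\phi(B-A)$, which lands on $\phi$ of the increment rather than $\lambda$ at an uncontrolled MVT point.
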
.

\begin{proof}
Recall that $\,\norm X\,\norm  \bydef |x| + |t|   \,$  and   $\,H(x,t)  \bydef  ( \,x ,\, t \lambda(\norm X\norm) \,)\,$.
Thus
\begin{equation}
 \norm H(X)  - H(X') \norm  \;\leqslant  |x - x'|\;+\; \; | t \lambda(\norm X\norm) \;-\;  t' \lambda(\norm X'\norm)\,|
\end{equation}

The first term is easily estimated as   $\,|x - x' | \leqslant \phi(|x - x' | ) \leqslant \phi(\norm X - X' \norm )\,$, because $\ s \leqslant \phi(s)\,$ and $\,\phi\,$  is increasing in $\,s \in [ 0, 1 ]\,$. The second term needs more work. First observe that for $\; 0 < A \leqslant B \leqslant 1\,$  it holds:
\begin{equation}\label{LambdaInequality}
  0 < \lambda(A)  -  \lambda(B) \; \leqslant A^{-1}  \phi(B-A)
\end{equation}
 Indeed,
 \[
\begin{split}
 \lambda(A) \;-\;\lambda(B)  &= n \frac{\phi(A) - \phi(B)}{A} \; +\; \frac{B-A}{A} \,\lambda(B) \\ & \leqslant  \frac{B-A}{A} \,\lambda(B - A) =  A^{-1}  \phi(B-A)
 \end{split}
 \]

  In the above formula,   the first term is negative because $\,\phi\,$ is increasing. In the second term we have used the inequality $\,\lambda(B) \leqslant \lambda(B-A)\,$, because $\,\lambda\,$ is nonincreasing.\\
  In   Inequality   (\ref{GlobalModulusContinuityH}) we may (and do)  assume that $\,\norm X'\norm \leqslant \norm X \norm\,$, for otherwise we can interchange $\,X\,$ with $\,X'\,$.  This yields $\, \frac{1}{2}\,\norm X - X' \norm  \leqslant  \norm X\norm\,$ and, consequently, $\,\lambda(\norm X \norm)  \leqslant  \lambda(\frac{1}{2}\norm X - X' \norm )  = \phi( \frac{1}{2}\norm X - X'\norm ) / \frac{1}{2}\norm X - X'\norm  \leqslant 2 \,\phi(\norm X - X'\norm ) /  \norm X - X'\norm \,$ .
Having this and  (\ref{LambdaInequality}) at hand, we conclude with the desired estimate
\[
\begin{split}
|\; t \lambda(\norm X\norm) \;-\;  t' \lambda(\norm X'\norm)\,\,| & \leqslant  |\, t - t'| \;\lambda(\norm X\norm)   +   t'\,|\,\lambda(\norm X\norm) \;-\; \lambda(\norm X'\norm)\,| \\
& \le 
\frac{2\, |\, t - t'| }{\norm X - X'\norm }\; \phi(\norm X - X'\norm  ) \\ & \; \; \; + \frac{t'}{\norm X'\norm}\; \phi (\norm X\norm  - \norm X'\norm ) \\
& \le
 2\,\phi(\norm X - X'\norm  )  \;\; + \; \phi(\norm X - X'\norm  )  \\ & = 3 \, \phi(\norm X - X'\norm  )
\end{split}
 \]
 
\end{proof}

\section{Modulus of continuity of  $\,F: \mathcal C_+ \onto \mathcal C_+\,$  }

All representative functions $\,\phi = \phi_k\,, k = 0, 1 , \dots $ that are listed in  $(\mathbf E _0) \dots (\mathbf E _k) \dots$ are concave ($\, \phi_k'' \leqslant 0\,\,$)  near the origin, but not necessarily in the entire interval $\,[0,1]\,$. Actually, upon minor modifications away from the origin all the above functions can be made concave in the entire interval $\,[0, 1]\,$. But  their aesthetic appearance will be lost. Thus, rather than modifying those examples, in the first step we restrict our attention to a neighborhood of the origin.  Outside such a neighborhood the mapping $\,F : \mathcal C_+ \onto \mathcal C_+\,$ is Lipschitz continuous. This will take care of the global estimate. \\
The additional condition imposed on $\,\phi\,$ reads as follows:
\begin{itemize}
\item [$(\textbf C_4) $]  There is an interval $\, (0, r] \subset (0, 1]\,$  in which $\phi\,$ is $\,\mathcal C^2\,$-smooth and concave; that is,
\begin{equation}\label{SecondDerivativeOfPhi}
 \phi''(s) \;\leqslant  \;0 \;,\;\;\textnormal{for} \;\; 0 < s \leqslant r
\end{equation}
\end{itemize}
We shall now prove that  $\,F\,$ admits $\,\phi\,$ as global modulus of continuity in $\,\mathcal C_+\,$.
\begin{proposition}\label{UniformModulusForF}
For arbitrary two points $\, Y = (y,\tau) \in\mathcal C_+\,$ and $\, Y' = (y',\tau') \in\mathcal C_+\,$ it holds:

\begin{equation}\label{GlobalModulusContinuityF}
 \norm F(Y)  - F  (Y') \norm  \;\;\preccurlyeq\, \;\phi(\norm Y - Y' \norm )\;
\end{equation}
The implied constant depends on the conditions imposed  through  $(\mathbf C_1)-(\mathbf C_4)\,.$
\end{proposition}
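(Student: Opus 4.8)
The plan is to bootstrap from the $H$-side estimate (\ref{GlobalModulusContinuityH}) together with the pointwise description of $F$ as a linear map on the diagonals of the rectangles described in Section~\ref{DefinitionH}. Recall $H(x,t) = (x,\, t\lambda(\norm X\norm))$, so that if $Y = H(X)$ with $X=(x,t)$, then $Y=(y,\tau)$ with $y=x$ and $\tau = t\lambda(t+|x|)$; hence the first (horizontal) coordinate is preserved under both $H$ and $F$, and all the work is in the vertical coordinate. Writing $s = \norm X\norm = t+|x|$ and $\sigma = \norm Y\norm = \tau + |y| = t\lambda(s) + |x|$, one checks $\sigma = \phi(s) - |x|(\lambda(s)-1) \le \phi(s)$, and one wants a matching lower bound of the form $\sigma \succcurlyeq \phi(s)$ on the concavity interval $(0,r]$ supplied by $(\textbf C_4)$ — this is where concavity enters, since for concave $\phi$ with $\phi(0)=0$ the chord inequality $\phi(s) - \phi(|x|) \le \phi'(|x|)(s-|x|) \le \frac{\phi(|x|)}{|x|}(s-|x|)$ forces $\sigma = \tau + |x|$ to stay comparable to $\phi(s)$. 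In other words, $\norm Y\norm \approx \phi(\norm X\norm)$, equivalently $\norm X \norm = \norm F(Y)\norm \approx \psi(\norm Y\norm)$ where $\psi = \phi^{-1}$, at least near the origin; away from the origin $F$ is Lipschitz by the uniform bound $\mathbf J_H \ge 1/M$ from (\ref{LowerBoundJacobian}) and the formula (\ref{MatrixDF}) for $(DH)^{-1}$, which is then harmless.

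Next I would estimate $\norm F(Y) - F(Y')\norm$ directly. Set $X = F(Y)=(x,t)$, $X'=F(Y')=(x',t')$. Since the horizontal coordinate is untouched, $|x-x'| = |y-y'| \le \norm Y-Y'\norm \le \phi(\norm Y - Y'\norm)$ because $s\le\phi(s)$. The vertical piece $|t-t'|$ is the crux. Here I would use $t = \tau/\lambda(\norm X\norm)$ and the companion identity for $t'$, and split as in the proof of (\ref{GlobalModulusContinuityH}):
\[
|t - t'| \le \frac{|\tau - \tau'|}{\lambda(\norm X\norm)} + \tau'\,\Bigl|\frac{1}{\lambda(\norm X\norm)} - \frac{1}{\lambda(\norm X'\norm)}\Bigr|.
\]
For the first term, $|\tau-\tau'| \le \norm Y - Y'\norm$ and $\lambda \ge 1$, so it is $\le \norm Y - Y'\norm \le \phi(\norm Y - Y'\norm)$. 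For the second term, assuming WLOG $\norm X'\norm \le \norm X\norm$ (so $\norm Y'\norm \le \norm Y\norm$ by monotonicity of $\sigma$ in $s$), I would exploit $1/\lambda(A) - 1/\lambda(B) = (\lambda(B)-\lambda(A))/(\lambda(A)\lambda(B))$ with a reversed version of the estimate (\ref{LambdaInequality}) from the $H$-computation — but now I must convert differences of $\norm X\norm$ back into differences of $\norm Y\norm$, which is exactly where the two-sided comparison $\norm Y\norm \approx \phi(\norm X\norm)$ from the previous paragraph is used, together with the doubling-type consequences of $(\textbf C_2)$ (namely $\lambda$ nonincreasing, $\lambda \ge 1/M$, and $\phi'(s) \ge 1/M$, giving $\psi$ a Lipschitz-type control from below). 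Combining, $\tau' |1/\lambda(\norm X\norm) - 1/\lambda(\norm X'\norm)| \preccurlyeq \phi(\norm Y-Y'\norm)$, and adding the three contributions gives (\ref{GlobalModulusContinuityF}).

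The main obstacle I anticipate is the second vertical term: unlike the $H$-side, where $\lambda$ appears with a plus sign and one simply bounds $\lambda(\norm X\norm) \le \lambda(\tfrac12\norm X-X'\norm)$, here the roles of domain and target are swapped, so the increment variable is $\norm Y - Y'\norm$ but $\lambda$ is naturally a function of $\norm X\norm = \psi(\text{something})$. Controlling $\lambda(\norm X\norm) - \lambda(\norm X'\norm)$ in terms of $\phi(\norm Y - Y'\norm)$ requires showing that $\norm X\norm - \norm X'\norm \preccurlyeq \psi'(\text{point})\cdot(\norm Y - \norm Y'\norm)$ and then re-expressing $\psi'$ via $\phi'$ at the corresponding point using $\psi'(\phi(s)) = 1/\phi'(s)$; the concavity hypothesis $(\textbf C_4)$ guarantees $\psi$ is convex near $0$ so that these chord estimates are one-directional and clean. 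One must also verify that $\tau' = t'\lambda(\norm X'\norm) \le \norm X'\norm \cdot \lambda(\norm X'\norm) = \phi(\norm X'\norm)$ is small enough to absorb into the estimate — this follows again from $s\le\phi(s)$ reversed, i.e.\ $\phi(\norm X'\norm) \le$ (a controlled multiple of) $\norm Y'\norm$. Assembling all constants, all of which depend only on $n$ and the constant $M$ in $(\textbf C_2)$ and the length $r$ in $(\textbf C_4)$, completes the proof; the Lipschitz region $\norm X\norm > r$ is dispatched separately and trivially.
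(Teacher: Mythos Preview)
Your approach has a genuine gap at the second vertical term, and the remedy you sketch does not work. First, the two-sided comparison $\norm Y\norm\approx\phi(\norm X\norm)$ is simply false near the base of the cone: when $t=0$ we have $H(x,0)=(x,0)$, so $\norm Y\norm=|x|=\norm X\norm$, while $\phi(\norm X\norm)/\norm X\norm=\lambda(\norm X\norm)\to\infty$ as $\norm X\norm\to 0$. Hence $\sigma\succcurlyeq\phi(s)$ cannot hold uniformly, and the ``conversion'' of $\norm X\norm$-differences into $\norm Y\norm$-differences is unavailable. Second, and more seriously, the split is circular. Bounding $\tau'\bigl|1/\lambda(\norm X\norm)-1/\lambda(\norm X'\norm)\bigr|$ via~(\ref{LambdaInequality}) produces at best $\phi\bigl(\bigl|\norm X\norm-\norm X'\norm\bigr|\bigr)$; but $\norm X\norm-\norm X'\norm=(t-t')+(|y|-|y'|)$ already contains the unknown $|t-t'|$. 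The resulting inequality $|t-t'|\le|\tau-\tau'|+\phi\bigl(|t-t'|+|y-y'|\bigr)$ is vacuous, since $\phi(s)\ge s$ on $[0,1]$ makes it hold for every value of $|t-t'|$.

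The paper does not mirror the $H$-side split. It writes $F(Y)=(y,\,T(\tau,|y|))$ with $T$ defined implicitly by $T\,\lambda(T+|y|)=\tau$ and differentiates implicitly. The bound $\partial T/\partial\tau\le M$ (from $\mathbf J_H\ge 1/M$) disposes of the $\tau$-variation. For the $|y|$-variation one obtains $0\le\partial T/\partial|y|\le M\phi'(T+|y|)$, and \emph{this} is where concavity~$(\mathbf C_4)$ enters: since $\phi'$ is nonincreasing on $(0,r]$ and $T+|y|\ge|y|$, the bound becomes $M\phi'(|y|)$, a function of the \emph{target} variable alone---this is precisely what breaks the circularity. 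A line integral along $\gamma\mapsto\gamma y+(1-\gamma)y'$ together with an averaging lemma for nonincreasing integrands then yields $|T(\tau,|y|)-T(\tau,|y'|)|\preccurlyeq\phi(|y-y'|)$. In short, concavity is used to eliminate the implicit variable $T$ from the derivative bound, not to compare $\norm Y\norm$ with $\phi(\norm X\norm)$.
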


\begin{proof}
A seemingly routine proof below, actually took an effort to accomplish all details. Let us begin with the definition of the map $\,H : \mathcal C_+ \onto \mathcal C_+\,$ and some new related notation. For $\,X = (x,t) \in \mathcal C_+ \subset \mathbb R^{n-1} \times \mathbb R\,$, we recall that

$$
\norm X\norm =  |x|\,+\, t \,\;\textnormal{and}\;\;  H(X)  = \big( x , t\,\lambda(t + |x|) \big) \;\bydef \;  \big( y ,  \tau \big) \,=\,Y  \in \mathbb R^{n-1} \times \mathbb R
$$
For the inverse map  $\,F = H^{-1}\,$ we write
$$
\norm Y \norm  \,= |y| \, +  \tau  \,\; \textnormal{and}\;\; F(Y) = \big( y, T \big ) \in \mathcal C_+ \subset \mathbb R^{n-1} \times \mathbb R
$$
where the vertical coordinate $\,T =  T(\tau, |y|) \,$ is determined uniquely from the equation
\begin{equation}\label{EqnForT}\, T\,\lambda ( T  + |y|)\, =\, \tau\,
\end{equation}
In much the same way as in~\eqref{DJacobian} we find that the function $\, T \rightsquigarrow  T\,\lambda ( T  + |y|)\,$ is strictly increasing. We actually have
$$
 \frac{\textnormal{d}\,T\,\lambda ( T  + |y|)}{\textnormal{d}T} = \lambda(T + |y|) \; +\; T \lambda'(T + |y|) \; \geqslant \frac{1}{M}
$$

Even  more can be said about the above expression. Indeed, denoting by $\, s \bydef T + |y| \leqslant 1\,$, we have the identity
$$
\lambda(s)  + T \lambda'(s) \; =\; \frac{\phi(s)}{s}\; + \; T \cdot \left( \frac{\phi'(s)}{s}\,-\, \frac{\phi(s)}{s^2}\,\right)  = \; \left(1 - \frac{T}{s}   \right) \frac{\phi(s)}{s} \; + \;\frac{T}{s}\cdot \phi'(s)
$$
which, in view of Condition ($\mathbf C_2$)\, at (\ref{ConditionC2}), also yields a useful upper bound,

\begin{equation}\label{LowerBound}
\frac{\phi(s)}{s}  \;\geqslant  \lambda(s)  + T \lambda'(s) \; \geqslant \phi'(s)  \geqslant \,\frac{1}{M}
\end{equation}

The latter follows from the Condition ($\mathbf C_2$)\, at (\ref{ConditionC2})\, as well.\\
Now, implicit differentiation in (\ref{EqnForT}) with respect to $\,\tau\,$-variable  gives
\begin{equation}\label{T/tau}
 0 \leqslant \frac{\partial T(\tau, |y|) }{\partial \tau}\;=\; \frac{1}{ \lambda(T + |y|) \; +\; T \lambda'(T + |y|)}  \;\leqslant M
\end{equation}
On the other hand, differentiation with respect to the $\,|y|\,$-variable gives

\begin{equation}\label{T/y}
0 \leqslant \frac{\partial T(\tau, |y|) }{\partial\, |y|}\,=\,\frac{ -\,T \lambda'(T + |y|)}{\lambda(T + |y|) \, +\, T \lambda'(T + |y|)}  \,=\, \frac {- T \lambda'(s)}{ \lambda(s) \,+\, T \lambda'(s)}
\end{equation}
It should be noted that $\,\lambda '(s) \leqslant 0\,$ whenever $\, s \bydef T + |y| \leqslant 1\,$. Precisely,
\begin{equation}
 0 \leqslant \, - \lambda '(s)   \;=\; - \,\frac {\phi'(s)}{s} \; +\; \frac{\phi(s)}{s^2}\; \leqslant  \frac{\phi(s)}{s^2}\,
\end{equation}

From this and the lower bound in (\ref{LowerBound}) we infer that

$$
0 \leqslant \frac{\partial T(\tau, |y|) }{\partial\, |y|}\; \leqslant \; \frac{T\phi(s)}{s^2 \phi'(s)} \; \leqslant \;\frac {\phi(s)}{s\, \phi'(s)}\;\leqslant \,  M \phi'(s)\, = M \phi'(T + |y|),
$$
the latter being guaranteed by the right hand side of inequality (\ref{ConditionC2}).\\

It is at this point that we are going to use the additional assumption that $\,\phi\,$ is concave near the origin; namely, $\,\phi'\,$ is non-increasing in $\,(0, r] \subset (0 , 1 ]\,$. Examine  an arbitrary point $\,Y = (y, \tau) \in \mathcal C_+\,$ of lengths $\,\norm Y \norm  \bydef \,\tau + |y| \leqslant \frac{r}{M}\,$ to show that $\, T + |y| \leqslant r\,$. Recall that $\,T\,$  is determined by the equation $\, T \lambda(T + |y|)  = \tau\,$.  Thus, we have
$ \, \frac{T}{M}  \leqslant \phi'(T+ |y|)\, T  \;\leqslant  \frac{\phi(T+|y|)}{T +|y|}\,T  \;=\;  \tau \, $, by Condition (\ref{ConditionC2}). Hence $\, T + |y| \leqslant M \tau + |y| \leqslant M ( \tau + |y|) \; \leqslant r\,$.
Since $\, s = T + |y| \geqslant |y|\,$ and $\,\phi'\,$ is non-increasing in $\,(0,r]\,$ , we infer that
\begin{equation}\label{EstimateTyDerivative}
0 \leqslant \frac{\partial T(\tau, |y|) }{\partial\, |y|} \; \leqslant M \phi'(|y|)\,\,,\;\;          \textnormal{whenever}  \; \tau + |y| \,\bydef \,\norm Y\norm  \leqslant  \frac{r}{M}
\end{equation}
We are now ready to formulate an estimate of the modulus of continuity of $\,F\,$ within the neighborhood of the origin that is determined by  $\, \norm Y\norm  \leqslant \frac{r}{M}\,$.

\begin{proposition}\label{PropositionOnF}  Let $\,Y = (y, \tau) \in \mathbb R^{n-1}\times \mathbb R\, $ and $\,Y' = (y', \tau') \in \mathbb R^{n-1} \times \mathbb R\, $ be points in $\,\mathcal C_+\,$ such that $\,\norm Y\norm \leqslant \frac{r}{M}\,$ and $\,\norm Y'\norm \leqslant \frac{r}{M}\,$. Then
\begin{equation}
\norm F(Y)\;-\; F(Y')\,\norm \; \leqslant \; 3 M  \phi(\norm Y -  Y'\norm )
\end{equation}
\end{proposition}
\begin{proof} With the notation for $\,F(Y) = ( y\,, \, T(\tau, |y|) )\, $ and $\,F(Y') = ( y'\,,\, T(\tau', |y'|)) \, $ we begin with the computation,

$$
\norm F(Y)  - F( Y')\,\norm = |y - y'| \; + \;|\, T(\tau, |y|) -  T(\tau', |y'|)\,|\; \leqslant
$$
$$
 |y - y'|\; +\; |\,T(\tau, |y|) -  T(\tau, |y'|)\,| \;+\;  |\,T(\tau, |y'|) -  T(\tau', |y'|)\,|\leqslant  (\textnormal{in view of} \;(\ref{T/tau}))
$$
$$
\leqslant \,|y - y'|\; +\; |\,T(\tau, |y|) -  T(\tau, |y'|)\,| \;+\; M\,|\tau - \tau'| \;\leqslant
$$
$$
|\,T(\tau, |y|) -  T(\tau, |y'|)\,| \,+\, M\,\norm Y - Y' \norm   \leqslant  |\,T(\tau, |y|) -  T(\tau, |y'|)\,| \, +\, M\,  \phi(\norm Y - Y' \norm)
$$
The latter is obtained  by the inequality $\, s \leqslant \phi(s) \,$, see (\ref{GammaBigger1}).  It remains to establish the following estimates, say  when $\,  0 < |y'| \leqslant |y| \leqslant r\,. $
\begin{equation} \label{DifferenceOFT}
 |\,T(\tau, |y|) -  T(\tau, |y'|)\,|  \; \leqslant  2 M \phi(|y -y'| ) \; \leqslant \,2 M \,\phi(\norm Y - Y'\norm )
\end{equation}
  To that end, we begin with the following expression:
  $$
  T(\tau, |y|) -  T(\tau, |y'|) \; = \int_0^1 \frac{\textnormal d}{\textnormal d \gamma } \; \Big[ T(\tau, |\gamma \,y + (1-\gamma) y'|)\Big]\; \textnormal{d}\gamma
  $$
$$
 =  \int_0^1  \;  T_\xi(\tau, |\gamma \,y + (1-\gamma) y'|)\, \left \langle\frac{\gamma \,y + (1-\gamma) y'\,}{|\gamma \,y + (1-\gamma) y'| }\;\Big |\; y - y'\,\right \rangle\, \textnormal{d}\gamma
$$
where $\,T_\xi (\tau, \xi) \,\bydef \frac{\partial T(\tau, \xi) }{\partial\, \xi}\,$ . In view of (\ref{EstimateTyDerivative})\,, we obtain
\begin{equation}\label{DifferenceOfT}
|\;T(\tau, |y|) -  T(\tau, |y'|)\;| \leqslant\, M |y- y'| \, \int_0^1  \; \phi'(|\gamma \,y + (1-\gamma) y'|)\; \textnormal{d}\gamma
\end{equation}
It is important to notice that $\,|\gamma \,y + (1-\gamma) y'| \leqslant r\,$ , which enables us to invoke Condition $\,(\textbf C_4)\,$  at (\ref{SecondDerivativeOfPhi}); that is, $\,\phi'\,$ is non-increasing  in the interval $\,(0, r]\,$. The following interesting lemma comes into play.

\begin{lemma}  Let $\,\Phi : (0 , r] \rightarrow (0, \infty)\,$ be continuous non-increasing and integrable,
$$
 \int_0^r \Phi(s) \,\textnormal{d}s \; <\; \infty\;.
$$
Then for every  vectors $\, \mathfrak{a} , \mathfrak{b} \,$ in a normed space  $\, ( \mathfrak N \, ; \,|\,.\,| )\,$, such that \; $\, 0 < | \mathfrak a | \leqslant r \,$  and  $\, 0 < | \mathfrak b | \leqslant r \,$, it holds:
\begin{equation}\label{LemmaonPhi}
\int_0^1 \Phi (| \gamma \mathfrak a  \; +\; (1-\gamma) \,\mathfrak b\,| )\, \textnormal{d} \gamma  \; \leqslant \;\frac{1}{|\mathfrak a |  + |\mathfrak b|} \left( \int_0^{|\frak a|} \Phi(s)\,\textnormal{d} s\; +\;  \int_0^{|\frak b|} \Phi(s)\,\textnormal{d} s \right)
\end{equation}
Equality occurs if  $\, \mathfrak a\,$ is a negative multiple of  $\,\mathfrak b\,$.
\end{lemma}

\begin{proof}
Since $\,\phi\,$ is non-increasing, by triangle inequality it follows that
$$
\int_0^1 \Phi (| \gamma \mathfrak a  \; +\; (1-\gamma) \,\mathfrak b\,| )\, \textnormal{d} \gamma  \; \leqslant \; \int_0^1 \Phi \left(\big|\, (1-\gamma) \, |\mathfrak b |\, -\, \gamma |\mathfrak a| \;\big| \right)\, \textnormal{d} \gamma  \; =
$$
$$
\int_0^{\frac{|\mathfrak b|}{|\mathfrak a| + |\mathfrak b}|} \Phi \big(\, (1-\gamma) \, |\mathfrak b |\, -\, \gamma |\mathfrak a| \; \big)\, \textnormal{d} \gamma     \;+\; \int_{\frac{|\mathfrak b|}{|\mathfrak a| + |\mathfrak b}|}^1 \Phi \big(\,\gamma |\mathfrak a|\,-\, (1-\gamma) \, |\mathfrak b |\, \big)\, \textnormal{d} \gamma
$$
 In the first integral we make a substitution  $\, s =  (1-\gamma) \, |\mathfrak b |\, -\, \gamma |\mathfrak a|\,$, which places $\, s\, $ in the interval $\, (0 , |\mathfrak b|\, )\,$ and $\, |\textnormal d s |  = ( |\mathfrak a| +|\mathfrak b|) \,\textnormal d \lambda\,$. This gives us the second integral-term of the right hand side of (\ref{LemmaonPhi}), and similarly for the first integral-term.
\end{proof}
Since $\,\phi'\,$ is non-increasing in the interval $\,(0, r ]\,$ (by inequality (\ref{SecondDerivativeOfPhi})  at Condition \,($\mathbf C_4$) ), we may apply Estimate (\ref{LemmaonPhi})\, to $\, \Phi = \phi'\,$.
Now, returning to (\ref{DifferenceOfT}), the inequality \,(\ref{DifferenceOFT})\, is readily inferred as follows:
$$
|\;T(\tau, |y|) -  T(\tau, |y'|)\;| \leqslant\, M |y- y'| \,\frac{\phi(|y|)  + \phi(|y'|)}{|y|  + |y'|} =
$$
$$
\,=\; M \, \phi(|y - y'|) \, \frac{|y-y'|}{\phi(|y-y'|)}\,\frac{\phi(|y|)  + \phi(|y'|)}{|y|  + |y'|}\; \leqslant
$$
$$
\, \leqslant M \, \phi(|y - y'|) \, \frac{|y| +|y'|}{\phi(|y|+|y'|)}\,\frac{\phi(|y|)  + \phi(|y'|)}{|y|  + |y'|}\; \leqslant 2 \, M \, \phi(|y - y'|)\,,
$$
because $\, \frac{s}{\phi(s)} = \frac{1}{\lambda(s)}\,$ \,is non-decreasing, see \,(\ref{LambdaPrime})\,, and $\,\phi(s)\,$ is increasing.
The proof of  Proposition \ref{PropositionOnF}\, is complete.
\end{proof}

 Finally, the global estimate (\ref{GlobalModulusContinuityF})  in Proposition \ref{UniformModulusForF}  follows from Proposition \ref{PropositionOnF},   whenever  $\,\norm Y\norm \leqslant \frac{r}{M}\,$ and $\,\norm Y'\norm \leqslant \frac{r}{M}\,$. Whereas its extension to all points $\,Y\,$ and $\,Y'\,$  is  fairly straightforward by invoking Lipschitz continuity of $\, F\,$  away from the origin.
\end{proof}

\section{Conclusion} \label{MainResult}
Choose  an arbitrary modulus of continuity function $\,\phi : [0, \infty)  \onto  [0, \infty)\,$ that satisfies conditions $\,(\mathbf C_1) \, (\mathbf C_2) \, (\mathbf C_3)\,$ and $\,(\mathbf C_4)\,$. Then consider a bi-conformal energy map $\, H :\mathcal C_+ \onto \mathcal C_+\,$ defined in \,(\ref{DefOfHonCplus})  together with its  inverse map $\, F :\mathcal C_+ \onto \mathcal C_+\,$. Extend $\, H\,$ and $\, F\,$  to the double cone $\,\mathcal C = \mathcal C_+ \cup \mathcal C_-\,$ by the reflection rule at\, (\ref{DefinitionOfH}). Afterwards extend $\, H\,$ and $\, F\,$ to the entire space $\,\mathbb R^n\,$  by setting $\, H = \textnormal{Id} : \mathbb R^n \setminus \mathcal C \onto \mathbb R^n \setminus \mathcal C\,$  and $\, F = \textnormal{Id} : \mathbb R^n \setminus \mathcal C \onto \mathbb R^n \setminus \mathcal C\,$. Then we obtain:

  \begin{figure}[!h]
\begin{center}
\includegraphics*[height=2.10in]{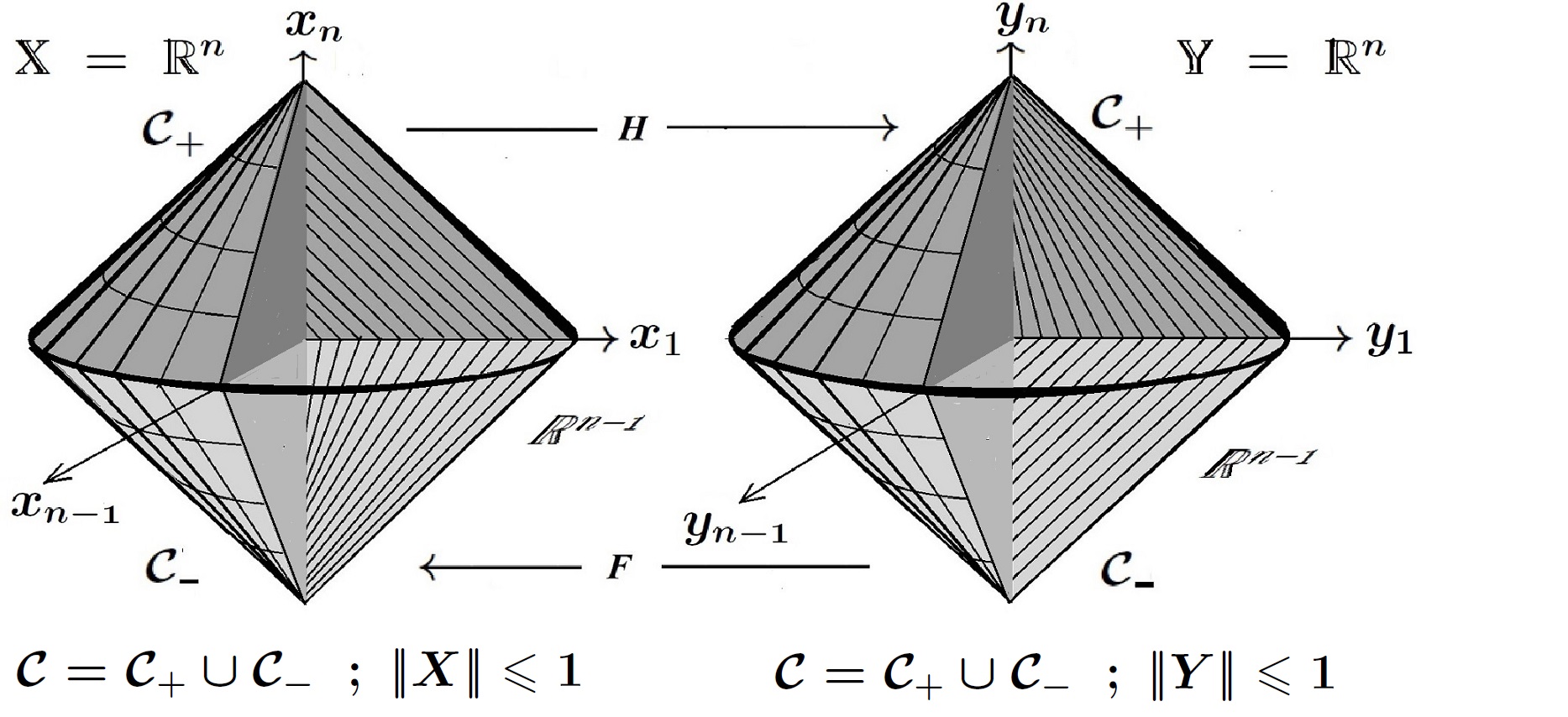}
\caption{Bi-conformal energy mapping $\,H\,$ and its inverse $\,F\,$ exhibit the same optimal modulus of continuity at the origin of the double cone $\mathcal C$. }\label{BiConformalEnergyMapping}
\end{center}
\end{figure}

\begin{theorem}\label{MainTheorem} For every modulus of continuity function $\,\phi : [0, \infty)  \onto  [0, \infty)\,$ satisfying conditions $\,(\mathbf C_1) \, (\mathbf C_2) \, (\mathbf C_3)\,$ and $\,(\mathbf C_4)\,$, there exists a homeomorphism $\,H : \mathbb R^n \onto \mathbb R^n\,$ of Sobolev class $\,\mathscr W^{1,n}_{\textnormal{loc}} (\mathbb R^n \,,\,\mathbb R^n)\,$, whose inverse $\,F = H^{-1}  : \mathbb R^n \onto \mathbb R^n\,$ also lies in the Sobolev space $\,\mathscr W^{1,n}_{\textnormal{loc}} (\mathbb R^n , \mathbb R^n)\,.$ Moreover

 \begin{itemize}
 \item  $\,H(0) = 0\,, \; H(X) \equiv X \,,\;\textnormal{for}\; \norm X\norm\geqslant  1 \,\, \textnormal{and for}\, X = (x_1, ... , x_{n-1} , 0 )\,. $
 \item  $\,H   : \mathbb R^n \onto \mathbb R^n\,$  admits $\,\phi\,$ as its global modulus of continuity; that is,
 \begin{equation}\norm\,H(X_1) \,-\, H(X_2)\,\norm  \; \, \preccurlyeq\,  \; \phi(\norm X_1 - X_2\norm)\;,\;\;\textnormal {for all}\;\; X_1 , X_2  \in \mathbb R^n\,.
 \end{equation}
 \item  The inverse map $\,F   : \mathbb R^n \onto \mathbb R^n\,$  satisfies the same condition:
 \begin{equation}\norm\,F(Y_1) \,-\, F(Y_2)\,\norm  \; \, \preccurlyeq\,  \; \phi(\norm Y_1 - Y_2\norm) \;,\;\;\textnormal {for all }  Y_1 , Y_2 \in \mathbb R^n\,.
 \end{equation}

 \item  $H$ and $F$ share the same optimal moduli of continuity at the origin; namely
 \begin{equation}\label{OptimalModulus}
 \omega_{_H}(0,r)  = \max_{\norm X \norm = r } |H(X)| \;=\;\phi(r) \;=\; \max_{\norm Y\norm = r} |F(Y)| = \omega_{_F}(0, r)
 \end{equation}
for all $0 \le r < \infty$.
\end{itemize}
 \end{theorem}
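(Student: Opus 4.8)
The proof will be a matter of assembling the constructions and estimates of Sections~\ref{DefinitionH}--\ref{ConEnerF}, together with those of the two sections on the moduli of continuity of $H$ and $F$ on $\mathcal C_+$, and then pushing them through the reflection/gluing recipe~\eqref{DefinitionOfH}--\eqref{DefinitionOfF}. Four points must be settled: \emph{(i)} the pieces paste to a self-homeomorphism of $\mathbb R^n$; \emph{(ii)} both $H$ and $F$ lie in $\mathscr W^{1,n}_{\textnormal{loc}}(\mathbb R^n,\mathbb R^n)$; \emph{(iii)} $\phi$ is a global modulus of continuity for each of $H$ and $F$; \emph{(iv)} the optimal moduli of $H$ and $F$ at the origin both equal $\phi$. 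For \emph{(i)} I would first note that on the common base $\partial\mathcal C_+\cap\partial\mathcal C_-=\{(x,0):|x|\le 1\}$ the two rules in~\eqref{DefinitionOfH} agree, since $H(x,0)=(x,0\cdot\lambda(|x|))=(x,0)$ by~\eqref{DefOfHonCplus} and $\mathfrak r\circ F\circ\mathfrak r$ fixes each base point because $F=H^{-1}$ does; hence the two conical pieces paste to a homeomorphism $H\colon\mathcal C\onto\mathcal C$. Since $\lambda(1)=\phi(1)=1$, formula~\eqref{DefOfHonCplus} gives $H(x,t)=(x,t)$ whenever $|x|+t=1$, so $H$ is the identity on $\partial\mathcal C$; extending by the identity outside $\mathcal C$ therefore yields a homeomorphism $H\colon\mathbb R^n\onto\mathbb R^n$ with $H(0)=0$, $H(X)\equiv X$ for $\norm X\norm\ge 1$ and on $\mathbb R^{n-1}\times\{0\}$. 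The inverse $F$ has the same structure by the symmetric recipe~\eqref{DefinitionOfF}.

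For \emph{(ii)}, recall that $H|_{\mathcal C_+}$ is locally Lipschitz with finite conformal energy $\int_{\mathcal C_+}|DH|^n\preccurlyeq E[\phi]<\infty$ by~\eqref{EnergyOfH}, while $F|_{\mathcal C_+}\in\mathscr W^{1,n}(\mathcal C_+)$ with $\int_{\mathcal C_+}|DF|^n=\int_{\mathcal C_+}\mathbf K_H<\infty$, by the identity~\eqref{AnIdentity} combined with the bound $\mathbf K_H\le(n-1)^{n/2}M^{2n-1}|DH|^n\in\mathscr L^1(\mathcal C_+)$ established in Section~\ref{ConEnerF} (which rests on $\mathbf J_H\ge 1/M$ from~\eqref{LowerBoundJacobian}). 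Since $\mathfrak r$ is a Euclidean isometry, $\mathfrak r\circ F\circ\mathfrak r$ carries the same energy over $\mathcal C_-$. The glued map is continuous across the Lipschitz hypersurfaces $\mathbb R^{n-1}\times\{0\}$ and $\partial\mathcal C$, and on both sides of each the traces match (they are the identity); hence the standard Sobolev gluing lemma shows that no singular derivative is produced along these null sets and that $H\in\mathscr W^{1,n}_{\textnormal{loc}}(\mathbb R^n,\mathbb R^n)$ with $\int_{\mathcal C}|DH|^n=\int_{\mathcal C_+}\bigl(|DH|^n+|DF|^n\bigr)<\infty$. The same reasoning applies to $F$.

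For \emph{(iii)}, the bound $\norm H(X_1)-H(X_2)\norm\le 4\,\phi(\norm X_1-X_2\norm)$ on $\mathcal C_+$ is~\eqref{GlobalModulusContinuityH}, the bound $\norm F(Y_1)-F(Y_2)\norm\preccurlyeq\phi(\norm Y_1-Y_2\norm)$ on $\mathcal C_+$ is Proposition~\ref{UniformModulusForF}, and by the isometry $\mathfrak r$ both pass verbatim to $\mathcal C_-$ for the reflected pieces; outside $\mathcal C$ the maps are the identity, with modulus $s\le\phi(s)$. For arbitrary $X_1,X_2\in\mathbb R^n$ I would cut $[X_1,X_2]$ at its (at most one) crossing of $\mathbb R^{n-1}\times\{0\}$ and at its entry/exit points of the convex set $\mathcal C$ — a uniformly bounded number of break points — into collinear sub-segments, each lying in one of $\mathcal C_+$, $\mathcal C_-$, $\mathbb R^n\setminus\mathcal C$; applying the appropriate estimate on each piece, summing, and using that the sub-lengths add up to $\norm X_1-X_2\norm$ together with monotonicity $\phi(\ell_j)\le\phi(\norm X_1-X_2\norm)$, one obtains $\norm H(X_1)-H(X_2)\norm\preccurlyeq\phi(\norm X_1-X_2\norm)$, and likewise for $F$. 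For \emph{(iv)}, on the portion of the sphere $\norm X\norm=r$ lying in $\mathcal C_+$ one has $\norm H(X)\norm\le\phi(r)$ by~\eqref{OptimalBounds} with equality at $(\mathbf 0,r)$; on the portion in $\mathcal C_-$, $\norm H(X)\norm=\norm F(\mathfrak r X)\norm\le\norm\mathfrak r X\norm=\norm X\norm\le\phi(\norm X\norm)$ by~\eqref{OptimalBounds2}; and on the remainder (outside $\mathcal C$, which occurs only for $r>1$), $\norm H(X)\norm=\norm X\norm=\phi(r)$. Hence $\omega_{_H}(0,r)=\phi(r)$, attained on the positive vertical axis, giving~\eqref{OptimalOscillationH}. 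Interchanging $H$ and $F$ gives $\omega_{_F}(0,r)=\phi(r)$, attained at $(\mathbf 0,-r)$ where $F(\mathbf 0,-r)=\mathfrak r H\mathfrak r(\mathbf 0,-r)=(\mathbf 0,-\phi(r))$, which is~\eqref{OptimalOscillationF}; the Euclidean-norm statements follow in the same way from~\eqref{OptimalBoundsEuclideanSetting}.

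The genuinely substantial work sits entirely in the two inputs already proved in the body of the paper: the finiteness of the inverse energy $\int_{\mathcal C_+}\mathbf K_H$, which depends on the uniform lower bound $\mathbf J_H\ge 1/M$, and — above all — the modulus estimate for $F$ in Proposition~\ref{UniformModulusForF}, whose proof needs the concavity near the origin imposed by $(\mathbf C_4)$ and the integral Lemma used in Proposition~\ref{PropositionOnF}. Granting these, the remaining steps — pasting, reflecting, extending by the identity, and globalizing the modulus by segment-splitting — are routine; the one place that still requires a little care is verifying that Sobolev regularity is not destroyed across the interfaces $\mathbb R^{n-1}\times\{0\}$ and $\partial\mathcal C$, which is exactly where the matching of the traces on both sides (all equal to the identity) is used.
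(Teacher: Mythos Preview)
Your proposal is correct and follows exactly the paper's approach: the paper presents Theorem~\ref{MainTheorem} in Section~\ref{MainResult} not with a separate proof but as the assembled conclusion of the construction in Section~\ref{DefinitionH}, the energy estimates of Sections~\ref{Jacobian}--\ref{ConEnerF}, and the modulus-of-continuity bounds~\eqref{GlobalModulusContinuityH} and Proposition~\ref{UniformModulusForF}, glued via the reflection rule~\eqref{DefinitionOfH}--\eqref{DefinitionOfF} and extended by the identity. Your write-up even fills in a few details the paper leaves implicit (the segment-splitting for the global modulus across the interfaces and the Sobolev gluing across $\mathbb R^{n-1}\times\{0\}$ and $\partial\mathcal C$), all of which are handled correctly.
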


\end{document}